\let\pa\partial  
\let\na\nabla  
\let\eps\varepsilon  
\newcommand{\N}{{\mathbb N}}  
\newcommand{\R}{{\mathbb R}} 
\newcommand{\diver}{\operatorname{div}}
\newtheorem{theorem}{Theorem}   
\newtheorem{lemma}[theorem]{Lemma}   
\newtheorem{proposition}[theorem]{Proposition}   
\newtheorem{remark}[theorem]{Remark}
\begin{document}  

\title[Semi-discrete Keller-Segel models]{Blow-up
of solutions to semi-discrete parabolic-elliptic Keller-Segel models}

\author{Ansgar J\"ungel}
\address{Institute for Analysis and Scientific Computing, Vienna University of  
	Technology, Wiedner Hauptstra\ss e 8--10, 1040 Wien, Austria}
\email{juengel@tuwien.ac.at} 

\author{Oliver Leingang}
\address{Institute for Analysis and Scientific Computing, Vienna University of  
	Technology, Wiedner Hauptstra\ss e 8--10, 1040 Wien, Austria}
\email{oliver.leingang@tuwien.ac.at} 

\date{\today}

\thanks{The authors acknowledge partial support from   
the Austrian Science Fund (FWF), grants P27352, P30000, F65, and W1245} 

\begin{abstract}
The existence of weak solutions and upper bounds for the blow-up time for
time-discrete parabolic-elliptic Keller-Segel models for chemotaxis in the 
two-dimensional whole space are proved. For various time discretizations,
including the implicit Euler, BDF, and Runge-Kutta methods, the same bounds
for the blow-up time as in the continuous case are derived by discrete 
versions of the virial argument. 
The theoretical results are illustrated by numerical simulations using an
upwind finite-element method combined with second-order time discretizations.
\end{abstract}

\keywords{Time discretization, chemotaxis, finite-time blow-up of solutions,
existence of weak solutions, higher-order schemes, BDF scheme, Runge-Kutta scheme}
 
\subjclass[2000]{35B44, 35Q92, 65L06, 65M20, 92C17}  

\maketitle


\section{Introduction}

This paper is concerned with the derivation of upper bounds for the blow-up
time in semi-discrete Keller-Segel systems in $\R^2$.
The (Patlak-) Keller-Segel model describes the evolution of the cell density of
bacteria or amoebae that are attracted by a chemical substance \cite{KeSe70,Pat53}.
Under simplifying assumptions, the cell density $n(x,t)$ and the density of
the chemo-attractant $c(x,t)$ satisfy the parabolic-elliptic equations
\begin{equation}\label{1.eq}
  \pa_t n = \diver(\na n-n\na c), \quad -\Delta c + \alpha c = n\quad\mbox{in }\R^2,
\end{equation}
where $\alpha\ge 0$ measures the degradation rate of the chemical substance. 
Denoting by $B_\alpha$ the Bessel potential if $\alpha>0$
and the Newton potential if $\alpha=0$ (see the appendix for the definitions), 
this system can be formulated more compactly as a single equation,
\begin{equation}\label{1.eqc}
  \pa_t n = \diver\big(\na n- n(\na B_\alpha*n)\big) \quad\mbox{in }\R^2
\end{equation}
with the initial condition
$$
  n(\cdot,0) = n_0\quad\mbox{in }\R^2.
$$

\subsection{Blow-up properties}

The coupling in equations \eqref{1.eq} is a positive feedback: The cells produce 
a signal that attracts other cells. The cell aggregation
is counterbalanced by diffusion, but if the cell density is
sufficiently large, the chemical interaction dominates diffusion and may lead
to finite-time blow-up of the cell density. 

System \eqref{1.eq} exhibits a dichotomy. Consider first the case without
degradation, $\alpha=0$. If
the initial mass satisfies $M:=\int_{\R^2}n_0dx < 8\pi$, no aggregation takes
place and the cells just diffuse for all times \cite{BDP06}. On the other hand, 
if the mass is supercritical, $M>8\pi$, and the second moment 
$I_0:=\int_{\R^2}n_0|x|^2dx$ is finite, the solutions blow up in finite time.
In the limit case $M=8\pi$, there is blow-up in infinite time with constant 
second moment \cite{BCM08}. 
We remark that the critical space is $L^{d/2}(\R^d)$ in space dimensions $d\ge 2$
\cite{CPZ04}. When the signal degradates, $\alpha>0$, a similar criterium holds:
The solutions exist for all time for subcritical initial masses
$M<8\pi$, and they blow up in finite time for supercritical masses $M>8\pi$ {\em and}
sufficiently small second moment $I_0$ \cite{CaCo08}. The blow-up time $T_{\rm max}$ 
can be estimated from above by 
\begin{equation}\label{Tstar}
  T_{\rm max}\le T^*:=\frac{2\pi I_0}{M(M-8\pi-2\sqrt{\alpha MI_0})}.
\end{equation}
The upper bound for the system with degradation becomes larger than that one
without degradation, indicating that blow-up
happens later than without degradation, which is biologically reasonable.
The idea of the proof is a virial argument: The second moment 
$I(t)=\int_{\R^2}n(\cdot,t)dx$ solves the differential inequality
\begin{equation}\label{1.dIdt}
  \frac{dI}{dt} \le -\frac{M}{2\pi}(M-8\pi) 
	+ \frac{\sqrt{\alpha}}{\pi}M^{3/2}\sqrt{I}.
\end{equation}
If $\alpha=0$, this expression becomes an identity. Now, if $t>T^*$, we
infer that $I(t)<0$, contradicting $n(\cdot,t)\ge 0$. 

Very detailed numerical simulations of the collapse phenomenon 
in the parabolic-para\-bol\-ic system
have been performed in, e.g., \cite{BCR05}. The asymptotic profile of blow-up
solutions in the parabolic-elliptic model was studied in \cite{DELS14}.
Numerical blow-up times were computed, for instance, 
in \cite{FMV15} using a kind of $H^2$ norm indicator; 
in \cite{BCR05} using the moving-mesh method; and
in \cite{Eps09} using discontinuous Galerkin approximations.

The aim of this paper is to prove the existence of weak solutions to various
versions of time-discrete equations 
and to derive discrete versions of inequality \eqref{1.dIdt}.
We analyze the implicit Euler and higher-order schemes, including BDF
(Backward Differentiation Formula) and Runge-Kutta schemes.
Our goal is not to design efficient numerical schemes but to 
{\em continue our program to ``translate'' mathematical techniques
from continuous to discrete situations} \cite{JuMi15,JuSc17,JuSc17a}. 

The proof of inequality \eqref{1.dIdt} is based on three properties: nonnegativity of
$n(x,t)$, mass conservation, and a symmetry argument in the nonlocal term in
\eqref{1.eqc}.
The implicit Euler scheme satisfies all these properties, while the nonnegativity
cannot be proved for higher-order schemes, although it seems to hold numerically
(see Section \ref{sec.num}). We discuss this issue below.

\subsection{State of the art}

The literature on the analysis and numerical approximation of Keller-Segel models 
is enormous. Therefore, 
we review only papers concerned with
the analysis of numerical schemes and the blow-up behavior of the discrete solutions, 
in particular possessing structure-preserving
features. We do not claim completeness and refer to the introduction of
\cite{AkAm17} for more references.

Most of the numerical schemes for the Keller-Segel model utilize the implicit Euler
method for the time discretization and aim to preserve some properties of the
continuous equations, like (local) mass conservation, positivity 
(more precisely, nonnegativity) preservation, or energy dissipation.
These schemes use
(semi-implicit) finite-difference methods \cite{CLM14,LWZ17,SaSu05}; 
an upwind finite-element discretization \cite{Sai07};
an Eulerian-Lagrangian scheme based on the characteristics method \cite{Smi07}; 
a Galerkin method with a diminishing flux limiter \cite{SSKHT13};
and finite-volume methods \cite{Fil06,ZhSa17}. A finite-volume scheme
was also studied in \cite{AkAm17} but with a first-order semi-exponentially 
fitted time discretization. Finally, a mass-transport steepest descent scheme
was analyzed in \cite{BCC08}. All these schemes are based on first-order
discretizations.

Only few results are concerned with higher-order time integrators.
Strongly $A(\theta)$-stable Runge-Kutta finite-element discretizations 
were analyzed in \cite{NaYa02}, and the convergence of
the discrete solution was shown. However, mass conservation or positivity
preservation was not verified.
As $A$-stable time integrators are computationally
very costly, often splitting methods are used.
A third-order strong stability preserving (SSP) Runge-Kutta time discretization 
for the advection term and the second-order 
Krylov IIF (implicit integration factor) method for the 
reaction-diffusion term, together with a positivity-preserving
discontinuous Galerkin approximation in space, 
was suggested in \cite{ZZLY16}, but without any analysis. 

A number of papers are concerned with the preservation of the nonnegativity 
of the cell density.
An example is \cite{ChKu08}, where a semi-discrete central-upwind scheme was proposed.
Moreover, in \cite{CEHK16}, a hybrid finite-volume
finite-difference method was combined with SSP explicit
Runge-Kutta schemes (for the parabolic-parabolic case) or the explicit Euler scheme
(for the parabolic-elliptic case). Clearly, a CFL condition is needed to ensure
the stability of the explicit schemes. 
Another approach was used in \cite{VaCh03} for a 
related tumor-angiogenesis model, where a Taylor series method in time 
allows for higher-order but still explicit schemes. 

We remark that there do not exist SSP implicit Runge-Kutta or multistep methods 
of higher order \cite[Section 6]{GST01}. Moreover, SSP for such methods is
guaranteed only under some finite time step condition \cite{BoCr79,Spi83}
(also see the recent work \cite{Ket11}).
In view of these results, positivity preservation of our higher-order schemes
cannot be expected.

We do not aim to preserve the free energy of the Keller-Segel system since
such schemes usually destroy the symmetry property needed for the blow-up argument;
see Remark \ref{rem.energy} for details. 
Some estimates on the discrete free energy in an implicit Euler
finite-volume approximation were shown in \cite{ZhSa17}.
A numerical scheme that dissipates the free energy numerically was suggested 
in \cite{CRW16} using a gradient-flow formulation of the energy functional 
with respect to a quadratic transportation distance. 
It is shown in \cite{SaSu05} that the dissipation of the discrete free energy 
may fail in an upwind finite-difference scheme.
The dissipation of the discrete entropy in a finite-volume modified
Keller-Segel system was proved in \cite{BeJu14}.

The novelty of this paper is the derivation of the critical initial mass and 
an upper bound for the blow-up time for various time discretizations, where
the values are the same as in the continuous case. This shows that the
``continuous'' methods carry over to the semi-discrete case. Because of the virial
argument, the extension to spatial discretizations is more delicate.
A possible direction is to extend the virial argument to a discrete Bessel 
potential, but we leave details to a future work.

\subsection{Main results}

Our main results can be sketched as follows (details will be given in 
sections \ref{sec.ie} and \ref{sec.hos}). Let $n_k$ be an approximation
of $n(\cdot,k\tau)$, where $\tau>0$ is the time step and $k\in\N$.
We recall the definitions $M=\int_{\R^2}n_0dx$ of the initial mass and
$I_0=\int_{\R^2}n_0|x|^2dx$ of the initial second moment.

\begin{itemize}
\item Existence of solutions to the implicit Euler scheme (Theorem \ref{thm.ie}):
For given $n_{k-1}\ge 0$ 
and sufficiently small time step $\tau$, there exists a unique weak solutions 
$n_k$ to the semi-discrete equations.
Moreover, the scheme preserves the positivity, conserves the mass, and has
finite second moment.
\item Finite-time blow-up for the implicit Euler scheme (Theorem \ref{thm.bu.ie}):
Let $M>8\pi$ and let $I_0$ be finite (if $\alpha=0$) or 
$I_0$ and $\tau$ be sufficiently small (if $\alpha>0$).
Then the semi-discrete solution exists only up to discrete times $k\tau\le T^*$, where
$T^*$ is defined in \eqref{Tstar}.
\item BDF schemes: For sufficiently small $\tau>0$, there exists a unique 
weak solution to the BDF-2 and BDF-3 scheme, 
conserving the mass and having finite second moment.
Moreover, under the same assumptions as for the implicit Euler scheme,
the solution blows up and $k\tau\le T^*$ (Theorem \ref{thm.bu.bdf2} for BDF-2 and
$\alpha\ge 0$, Theorem \ref{thm.bu.bdf3} for BDF-3 and $\alpha=0$).
\item Runge-Kutta schemes (Theorem \ref{thm.bu.rk}): If $\alpha=0$
and under the same assumptions as for the implicit Euler scheme,
the solution blows up and $k\tau\le T^*$. The same result holds for the
implicit midpoint and trapezoidal rule if $\alpha>0$.
\end{itemize}

The interesting fact is that the upper bound $T^*$ is the same for both the
continuous and semi-discrete equations. Observe that the existence results
do not require the condition $M<8\pi$, since they are local. 
In particular, the smallness condition on $\tau$ is natural, and the time step
needs to be chosen smaller and smaller when the blow-up time is approached.

The paper is organized as follows. Section \ref{sec.ie} is concerned with the
analysis of the implicit Euler scheme, while some higher-order schemes
(BDF, Runge-Kutta) are investigated in section \ref{sec.hos}.
In section \ref{sec.num} we provide some
numerical examples to illustrate our theoretical statements.
Finally, the appendix collects some auxiliary results.


\section{Implicit Euler scheme}\label{sec.ie}

We show the existence of weak solutions to the implicit Euler approximation 
of the Keller-Segel system \eqref{1.eqc} and their finite-time blow-up, where
\begin{equation}\label{2.ie}
  \frac{1}{\tau}(n_k-n_{k-1}) = \diver\big(\na n_k - n_k\na B_\alpha*n_k\big)
	\quad\mbox{in }\R^2.
\end{equation}
Here, $\tau>0$ is the time step and $B_\alpha$ is the Bessel potential
if $\alpha>0$ and the Newton potential if $\alpha=0$ (see the appendix).
First, we study the existence of solutions. For this, 
let $X:=L^1(\R^2)\cap L^\infty(\R^2)$ with norm
$\|u\|_X=\max\{\|u\|_{L^1(\R^2)},\|u\|_{L^\infty(\R^2)}\}$ for $u\in X$.

\begin{theorem}[Existence for the implicit Euler scheme]\label{thm.ie}
Let $n_{k-1}\in X$ and 
$$
  \tau < \frac{1}{(\pi+1/2)^2}\frac{1}{(\|n_{k-1}\|_X+1)^4}.
$$
Then there exists a unique weak solution $n_k\in X\cap H^1(\R^2)$ to \eqref{2.ie}
such that 
\begin{itemize}
\item nonnegativity: if $n_{k-1}\ge 0$ then $n_k\ge 0$ in $\R^2$,
\item conservation of mass: $\int_{\R^2}n_k dx = \int_{\R^2}n_{k-1}dx$,
\item control of second moment: if $\int_{\R^2} n_{k-1}|x|^2dx<\infty$ 
then $\int_{\R^2}n_k|x|^2dx<\infty$.
\end{itemize}
\end{theorem}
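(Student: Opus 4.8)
The plan is to recast \eqref{2.ie} as a fixed-point equation for the linear elliptic operator $I-\tau\Delta$ and to apply the Banach fixed-point theorem in the space $X$. Writing \eqref{2.ie} as
\[
  (I-\tau\Delta)n_k = n_{k-1} - \tau\diver\big(n_k\,\na B_\alpha*n_k\big),
\]
I introduce the Bessel-type kernel $G_\tau$ with $\widehat{G_\tau}(\xi)=(1+\tau|\xi|^2)^{-1}$, so that $(I-\tau\Delta)^{-1}f=G_\tau*f$, and define the map $S(\bar n)=G_\tau*n_{k-1}-\tau(\na G_\tau)*(\bar n\,\na B_\alpha*\bar n)$, where the divergence has been shifted onto the kernel by integration by parts. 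A fixed point of $S$ is exactly a (mild) solution of \eqref{2.ie}. I would work on the closed ball $B_R=\{\|\bar n\|_X\le R\}$ with $R=\|n_{k-1}\|_X+1$.

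The key ingredients are kernel estimates. Since $G_\tau\ge0$ and $\int_{\R^2}G_\tau\,dx=\widehat{G_\tau}(0)=1$, Young's inequality gives $\|G_\tau*n_{k-1}\|_X\le\|n_{k-1}\|_X$, while a scaling argument ($x\mapsto\sqrt\tau x$ in $\R^2$) yields $\|\na G_\tau\|_{L^1(\R^2)}\le C\tau^{-1/2}$, which is the crucial factor. For the drift velocity I would split the kernel $\na B_\alpha$ into its (locally integrable) singular part near the origin and its bounded tail, convolving the first with the $L^\infty$-part and the second with the $L^1$-part of $\bar n$, to obtain $\|\na B_\alpha*\bar n\|_{L^\infty}\le C\|\bar n\|_X$ and hence $\|\bar n\,\na B_\alpha*\bar n\|_X\le C\|\bar n\|_X^2$. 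Combining these gives
\[
  \|S(\bar n)\|_X\le\|n_{k-1}\|_X+C\tau^{1/2}\|\bar n\|_X^2,
\]
and a parallel computation for $S(\bar n_1)-S(\bar n_2)$, expanding the bilinear difference as $(\bar n_1-\bar n_2)\na B_\alpha*\bar n_1+\bar n_2\na B_\alpha*(\bar n_1-\bar n_2)$, produces a Lipschitz constant of order $\tau^{1/2}R$. It is precisely the combination of the $\tau^{-1/2}$ kernel scaling with the quadratic $L^\infty$-bound on the drift that generates the fourth-power structure in the hypothesis: choosing the constant $C=\pi+1/2$ and the stated bound $\tau<(\pi+1/2)^{-2}(\|n_{k-1}\|_X+1)^{-4}$ makes $S$ a self-map of $B_R$ and a contraction, so Banach's theorem produces a unique fixed point $n_k\in X$. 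I expect the sharp bookkeeping of these constants to be the most delicate part of the existence argument.

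It then remains to verify the qualitative properties. For the $H^1$-regularity I would set $u:=G_\tau*(n_k\na B_\alpha*n_k)$ and $v:=G_\tau*n_{k-1}$, which solve $(I-\tau\Delta)u=n_k\na B_\alpha*n_k\in L^2$ and $(I-\tau\Delta)v=n_{k-1}\in L^2$ (using $X\hookrightarrow L^2$); elliptic regularity gives $u,v\in H^2$, and since $n_k=v-\tau\na u$, this yields $n_k\in H^1(\R^2)$ and legitimises the weak formulation used below. For the nonnegativity I would test the weak form with $n_k^-=\max\{-n_k,0\}$. Using $\int n_{k-1}n_k^-\ge0$, integrating the drift term by parts, and invoking the identity $\Delta B_\alpha*n_k=\alpha B_\alpha*n_k-n_k$, I obtain
\[
  \int_{\R^2}|\na n_k^-|^2+\frac1\tau\int_{\R^2}(n_k^-)^2\le-\frac\alpha2\int_{\R^2}(B_\alpha*n_k)(n_k^-)^2.
\]
When $\alpha=0$ the right-hand side vanishes and $n_k^-\equiv0$; when $\alpha>0$ I would bound it by $\tfrac12\|n_k\|_{L^\infty}\int(n_k^-)^2$ using $\|B_\alpha\|_{L^1}=1/\alpha$, and absorb it into the $\tfrac1\tau$-term, which is possible exactly because the smallness of $\tau$ forces $\tfrac1\tau>\tfrac12\|n_k\|_{L^\infty}$.

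Finally, mass conservation and the second-moment bound follow by testing with cutoffs $\phi_R(x)=\chi(x/R)$ and $|x|^2\chi(x/R)$ and letting $R\to\infty$; the annular gradient contributions vanish since $\na n_k\in L^2$ and the integrands are integrable. The only genuinely nonlinear term is the virial contribution $2\int n_k\,x\cdot(\na B_\alpha*n_k)$, which I would symmetrise in $x\leftrightarrow y$ to rewrite as $\int\!\int n_k(x)n_k(y)\,(x-y)\cdot\na B_\alpha(x-y)\,dx\,dy$; since $r\mapsto r\cdot\na B_\alpha(r)$ is bounded (equal to $-1/(2\pi)$ for the Newton potential), this term is controlled by $CM^2$, giving $\int n_k|x|^2\le\int n_{k-1}|x|^2+\tau(4M+CM^2)<\infty$. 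Uniqueness beyond the ball $B_R$, in the full class $X\cap H^1$, would follow from the same energy estimate applied to the difference of two solutions under the $\tau$-smallness condition.
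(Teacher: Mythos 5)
Your proposal is correct, and its existence part is essentially the paper's argument in different notation: your kernel $G_\tau$ is exactly $\tau^{-1}B_{1/\tau}$, and your self-map/contraction estimates (built on $\tau\|\na G_\tau\|_{L^1}\sim\tau^{1/2}$ and $\|\na B_\alpha*\bar n\|_{L^\infty}\le b\|\bar n\|_X$ with $b=1+1/(2\pi)$, so $\pi b=\pi+1/2$) reproduce the paper's constants and the stated condition on $\tau$. The genuine divergence is the nonnegativity step. The paper runs the fixed point for a \emph{truncated} problem, with drift $n_k^+\na B_\alpha*n_k$ instead of $n_k\na B_\alpha*n_k$; testing with $n_k^-$ then makes the drift integral vanish identically (since $n_k^+\na n_k^-=0$ a.e.), so $n_k\ge0$ follows for every $\alpha\ge0$ with no recourse to the equation for $c$, and a posteriori $n_k^+=n_k$ recovers \eqref{2.ie}. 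You instead keep the untruncated equation and exploit the elliptic structure: writing the drift term as $\frac12\int_{\R^2}\na\big((n_k^-)^2\big)\cdot\na c\,dx$ and using $\Delta c=\alpha c-n_k$ yields a sign-favorable cubic term plus $-\frac{\alpha}{2}\int_{\R^2}(B_\alpha*n_k)(n_k^-)^2dx$, which you absorb via $\|B_\alpha\|_{L^1}=1/\alpha$ and the smallness of $\tau$; I checked that the stated $\tau$-bound indeed gives $1/\tau>\frac12\|n_k\|_{L^\infty}$, so the absorption closes. Your route has the advantage of solving \eqref{2.ie} directly for arbitrary $n_{k-1}\in X$ (the paper's fixed point coincides with \eqref{2.ie} only once $n_{k-1}\ge0$ is assumed), and your symmetrization bound $|z\cdot\na B_\alpha(z)|\le 1/(2\pi)$ for the second moment is if anything cleaner than the paper's absorption estimate. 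The price is two technical points you should spell out: for $\alpha=0$ the potential $B_0*n_k$ need not be defined pointwise, so the identity must be phrased as $\diver(\na B_0*n_k)=-n_k$ in the distributional sense (valid for $n_k\in L^1\cap L^\infty$); and pairing $\Delta c$ with $(n_k^-)^2$, which is not compactly supported, needs a cutoff/mollification argument of the type the paper uses for mass conservation. Finally, your closing claim of uniqueness in all of $X\cap H^1$ via an $L^2$ energy estimate does not come for free, since $m\,\na B_\alpha*(n-m)$ cannot be controlled by $L^2$ norms of $n-m$ alone; but the paper's own uniqueness is likewise only that of the fixed point in the ball $S$, so on this point you match what is actually proved.
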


\begin{proof}
The strategy is to prove first the existence of a unique very weak
solution $n_k\in X$ to the truncated problem
\begin{equation}\label{2.aux}
  \frac{1}{\tau}\int_{\R^2}(n_k-n_{k-1})\phi dx
	= \int_{\R^2}n_k\Delta\phi dx + \int_{\R^2}n_k^+(\na B_\alpha*n_k)\cdot\na\phi dx
\end{equation}
for all $\phi\in H^1(\R^2)$, where $n_k^+=\max\{0,n_k\}$. The second step
is to show that $n_k\in H^1(\R^2)$. Then we can use $n_k^-=\min\{0,n_k\}$ as
a test function in the weak formulation and prove that $n_k\ge 0$.

{\em Step 1: Solution to \eqref{2.aux}.}
To simplify the notation, we write $n:=n_k$ and $n_0:=n_{k-1}$. We introduce
the operator $\na c:X\to L^\infty(\R^2)^2$, $\na c[n]=\na B_\alpha*n$. 
We claim that this operator is well-defined and continuous. 
Indeed, set for $\alpha\ge 0$,
\begin{equation}\label{2.ga}
  g_\alpha(r) := \int_0^\infty e^{-s-\alpha r^2/(4s)}dx\le 1
  \quad\mbox{for }r>0. 
\end{equation}	
Then, after the substitution $s=|x|^2/(4t)$, we can write
\begin{equation}\label{2.naB}
  \na B_\alpha(z) = -\frac{1}{2\pi}\frac{z}{|z|^2}g_\alpha(|z|), \quad z\in\R^2,
\end{equation}
and it follows that
\begin{align}
  |\na c[n](x)|
	&= \bigg|\frac{1}{2\pi}\int_{\R^2}\frac{x-y}{|x-y|^2}g_\alpha(|x-y|)n(y)dy\bigg| 
	\nonumber \\
	&\le \frac{1}{2\pi}\int_{|x-y|\le 1}\frac{|n(y)|}{|x-y|}dy
	+ \frac{1}{2\pi}\int_{|x-y|>1}\frac{|n(y)|}{|x-y|}dy \nonumber \\
  &\le \|n\|_{L^\infty(\R^2)} + \frac{1}{2\pi}\|n\|_{L^1(\R^2)}
	\le b\|n\|_X. \label{2.nac}
\end{align}
where $b:=1+1/(2\pi)$.
This shows the continuity of $\na c$.

Next, for given $\widetilde n\in X$, the linear problem
$$
  -\Delta n + \tau^{-1}(n-n_0) = -\diver(\widetilde n^+\na c[\widetilde n])
$$
has a unique solution in $H^1(\R^2)$. 
Indeed, since we have $\widetilde n^+\in L^2(\R^2)$,
$\na c[\widetilde n]\in L^\infty(\R^2)^2$, and therefore
$f:=\widetilde n^+\na c[\widetilde n]\in L^2(\R^2)^2$, we can apply Lemma 
\ref{lem.ell} in the appendix 
yielding the unique solvability of the linear problem. The solution is given by
\begin{equation}\label{2.n}
  n = \frac{1}{\tau}B_{1/\tau}*n_0 - \na B_{1/\tau}*(\widetilde n^+
	\na c[\widetilde n]).
\end{equation}
Hence, we can define the fixed-point operator $T:X\to X$ by
$T[\widetilde n]:=n$, and $n$ is given by \eqref{2.n}. Clearly, any fixed point of
$T$ is a  solution to \eqref{2.aux}.
We apply the Banach fixed-point theorem to $T$ on the set 
$S:=\{n\in X:\|n\|_X\le \|n_0\|_X+1\}$. For this, we need to show that 
$T:S\to S$ is a contraction.

For the proof of $T(S)\subset S$, we use Lemma \ref{lem.bessel}, the Young
inequality with $p=q=r=1$ (see the appendix), and \eqref{2.nac}:
\begin{align*}
  \|T[n]\|_{L^1(\R^2)}
	&\le \frac{1}{\tau}\|B_{1/\tau}* n_0\|_{L^1(\R^2)}
	+ \|\na B_{1/\tau}*(n^+\na c[n])\|_{L^1(\R^2)} \\
	&\le \frac{1}{\tau}\|B_{1/\tau}\|_{L^1(\R^2)}\|n_0\|_{L^1(\R^2)} 
	+ \|\na B_{1/\tau}\|_{L^1(\R^2)}\|\na c[n]\|_{L^\infty(\R^2)}\|n\|_{L^1(\R^2)} \\
	&\le \|n_0\|_{L^1(\R^2)} 
	+ \frac{\pi}{2}\tau^{1/2}b\|n\|_X^2.
\end{align*}
Similarly, we obtain
$$
  \|T[n]\|_{L^\infty(\R^2)}
	\le \|n_0\|_{L^\infty(\R^2)} + \frac{\pi}{2}\tau^{1/2}b\|n\|_X^2.
$$
Combining the previous two estimates, we conclude that
$$
  \|T[n]\|_X 	\le \|n_0\|_X + \frac{\pi}{2}\tau^{1/2}b\|n\|_X^2.
$$
Then choosing $\tau<(\pi b)^{-2}(\|n_0\|_X+1)^{-4}$, we see that
$\|T[n]\|_X\le \|n_0\|_X+1/2$, which shows that $T[n]\in S$.

To show the contraction property, let $n$, $m\in S$. Then, estimating as above,
\begin{align*}
  \|T[n]-T[m]\|_X
	&\le \big\|\na B_{1/\tau}*\big(n^+\na c[n]-m^+\na c[m]\big)\big\|_X \\
	&\le \|\na B_{1/\tau}\|_{L^1(\R^2)}\big\|(n^+-m^+)\na c[n] + m^+(\na c[n]-\na c[m])
	\big\|_X \\
  &\le \frac{\pi}{2}\tau^{1/2}\big(\|n^+-m^+\|_X\|\na c[n]\|_{L^\infty(\R^2)}
	+ \|m\|_X\|\na c[n-m]\|_{L^\infty(\R^2)}\big) \\
	&\le \pi\tau^{1/2}b\big(\|n_0\|_X+1\big)\|n-m\|_X.
\end{align*}
Since $\pi\tau^{1/2}b(\|n_0\|_X+1)<1$, $T$ is a contraction.
By the Banach fixed-point theorem, $T$ has a fixed point $n_k\in X$, which
is a solution to \eqref{2.aux}.

{\em Step 2: Regularity of the solution to \eqref{2.aux}.} We prove that for any
$\alpha\ge 0$, $n_k\in H^1(\R^2)$ solves
\begin{equation}\label{2.aux2}
  \frac{1}{\tau}\int_{\R^2}(n_k-n_{k-1})\phi dx
	= -\int_{\R^2}\na n_k\cdot\na\phi dx + \int_{\R^2}n_k^+(\na B_\alpha*n_k)\cdot
	\na\phi dx
\end{equation}
for all $\phi\in H^1(\R^2)$ and satisfies
\begin{equation}\label{2.est}
  \|n_k\|_{H^1(\R^2)} \le C_0\|n_{k-1}\|_{L^2(\R^2)}
\end{equation}
for some positive constant $C_0$ which depends on $\tau$ and $\|n_{k-1}\|_X$.

Again, we set $n:=n_k$ and $n_0:=n_{k-1}$. Since $n\in X\subset L^2(\R^2)$,
we have $n^+\na c[n]\in L^2(\R^2)^2$. Therefore, by Lemma \ref{lem.ell},
$n\in H^1(\R^2)$ is the unique solution to \eqref{2.aux2}. We take $n$ as a test 
function in that equation and use the Young inequality:
\begin{align*}
  \frac{1}{2\tau}\int_{\R^2}(n^2-n_0^2)dx
	&\le \frac{1}{\tau}\int_{\R^2}(n-n_0)n dx
	= -\|\na n\|_{L^2(\R^2)}^2 + \int_{\R^2}n^+\na c[n]\cdot\na n dx \\
	&\le -\frac12\|\na n\|_{L^2(\R^2)}^2 + \frac12\|n\|_{L^2(\R^2)}^2
	\|\na c[n]\|_{L^\infty(\R^2)}^2.
\end{align*}
By \eqref{2.nac} and $\|n\|_X\le\|n_0\|_X+1$, we find that
$$
  \frac{1}{\tau}\int_{\R^2}(n^2-n_0^2)dx + \|\na n\|_{L^2(\R^2)}^2
	\le b^2\|n\|_X^2\|n\|_{L^2(\R^2)}^2 
	\le b^2\big(\|n_0\|_X+1\big)^2\|n\|_{L^2(\R^2)}^2.
$$
Then, since $\beta:=1-b^2(\|n_0\|_X+1)^2\tau>0$, we infer that
$$
  \|n\|_{L^2(\R^2)}^2 \le \frac{1}{\beta}\|n_0\|_{L^2(\R^2)}^2,
$$
and the claim follows with $C_0=1/\sqrt{\beta}$.

{\em Step 3: Nonnegativity of $n_k$.}
Let $n_{k-1}\ge 0$ in $\R^2$.
We use $n_k^-=\min\{0,n_k\}\in H^1(\R^2)$ as a test function in \eqref{2.aux2}:
\begin{align*}
  \frac{1}{\tau}\int_{\R^2}(n_k^-)^2 dx
  &\le \frac{1}{\tau}\int_{\R^2}(n_k - n_{k-1})n_k^-dx \\
	&= -\int_{\R^2}|\na n_k^-|^2 dx + \int_{\R^2}n_k^+\na c[n_k]\cdot\na n_k^- dx
	\le 0,
\end{align*}
since $n_{k-1}n_k^-\le 0$ and the last integral on the right-hand side vanishes.
This shows that $n_k\ge 0$ in $\R^2$, and $n_k^+=n_k$ in \eqref{2.aux2}.

{\em Step 4: Mass conservation.}
The statement follows immediately if we could use $\phi(x)=1$ as a test function 
in \eqref{2.aux}. Since this function is not integrable, we need to approximate.
As in \cite{KoSu08}, we introduce the radially symmetric cut-off function
$\phi_R(x)=\phi(|x|/R)$, where $R\ge 1$ and
\begin{equation}\label{2.cutoff}
  \phi(r) = \begin{cases} 
	    1 & \mbox{for }0\leq r \leq 1, \\
      1-2(r-1)^2 & \mbox{for }1< r\leq 3/2, \\
      2(2-r)^2 & \mbox{for }3/2 < r \leq 2, \\
      0 & \mbox{for }r\geq 2.
   \end{cases}
\end{equation}
The following properties hold:
\begin{equation}\label{2.phi}
\begin{aligned}
  & \phi_R\in H^2(\R^2), \quad 
	\lim_{R\to\infty} \phi_R(x)=1 &&\quad\mbox{for all }x\in\R^2, \\
	& |\na\phi_R(x)-\na\phi_R(y)| \le \frac{C_1}{R^2}|x-y|, \quad
	|\Delta\phi_R(x)|\le \frac{C_2}{R^2} &&\quad\mbox{for all }x,y\in\R^2,
\end{aligned}
\end{equation}
for some constants $C_1$, $C_2>0$. Let $0\le\rho^\eps\in C_0^\infty(\R^2)$ be a 
standard mollifier and set $\phi^\eps_R=\phi_R*\rho^\eps$. Then
$\phi^\eps_R\to\phi_R$ in $H^2(\R^2)$ as $\eps\to 0$ 
\cite[Lemma 1.8.2]{Kry08}. Consequently, up to a subsequence,
which is not relabeled, $\phi^\eps_R\to\phi_R$, $\na\phi^\eps_R\to\na\phi_R$,
and $\Delta\phi^\eps_R\to\Delta\phi_R$ a.e.\ in $\R^2$.
We use $\phi^\eps_R$ as a test function in \eqref{2.aux} and insert \eqref{2.naB}:
\begin{align*}
  \bigg|\frac{1}{\tau}&\int_{\R^2}(n_k-n_{k-1})\phi^\eps_R dx\bigg|
	= \bigg|\int_{\R^2}n_k\Delta\phi^\eps_R dx
	+ \int_{\R^2}n_k(\na B_\alpha*n_k)\cdot\na\phi^\eps_R dx\bigg| \\
	&\le \frac{C_2}{R^2}\|n_k\|_{L^1(\R^2)}
	+ \frac{1}{2\pi}\bigg|\int_{\R^2}\int_{\R^2}n_k(x)n_k(y)g_\alpha(|x-y|)
	\frac{x-y}{|x-y|^2}\cdot\na\phi^\eps_R(x)dxdy\bigg|.
\end{align*}
By symmetry and \eqref{2.phi}, the second integral can be estimated as
\begin{align*}
  \frac{1}{2\pi}&\bigg|\int_{\R^2}	\int_{\R^2}n_k(x)n_k(y)g_\alpha(|x-y|)
	\frac{x-y}{|x-y|^2}\cdot\na\phi^\eps_R(x)dxdy\bigg| \\
	&= \frac{1}{4\pi}
	\bigg|\int_{\R^2}\int_{\R^2}n_k(x)n_k(y)g_\alpha(|x-y|)\frac{x-y}{|x-y|^2}
	\cdot\big(\na\phi^\eps_R(x)-\na\phi^\eps_R(y)\big)dydx\bigg| \\
	&\le \frac{C_1}{4\pi R^2}\int_{\R^2}\int_{\R^2}n_k(x)n_k(y)dydx 
	= \frac{C_1}{4\pi R^2}\|n_k\|_{L^1(\R^2)}^2.
\end{align*}
These estimates allow us to apply the dominated convergence theorem, which leads,
in the limit $\eps\to 0$, to
$$
  \frac{1}{\tau}\int_{\R^2}(n_k-n_{k-1})\phi_R dx
	= \int_{\R^2}n_k\Delta\phi_R dx
	+ \int_{\R^2}n_k(\na B_\alpha*n_k)\cdot\na\phi_R dx.
$$
The same estimates as before show that both integrals on the right-hand side
can be estimated by a multiple of $1/R^2$ such that the limit $R\to\infty$ leads to
$$
  \frac{1}{\tau}\int_{\R^2}(n_k-n_{k-1})dx = 0,
$$
which gives mass conservation.

{\em Step 5: Control of the second moment.}
Similarly as in step 4, we approximate $|x|^2$ by setting 
$\psi_R(x)=|x|^2\phi_R(x)$, where $\phi_R$ is defined in \eqref{2.cutoff}.
Then $\psi_R\in H^2(\R^2)$, $\na\psi_R$ is Lipschitz continuous on $\R^2$,
and $\Delta\psi_R$ is bounded. Taking a standard mollifier $\rho^\eps\ge 0$, we set
$\psi^\eps_R=\psi_R*\rho^\eps\in C_0^\infty(\R^2)$. Using $\psi_R^\eps$ as
a test function in \eqref{2.aux} and passing to the limit $\eps\to 0$ and 
then $R\to\infty$, it follows that
\begin{equation}\label{2.Ik}
  \frac{1}{\tau}\int_{\R^2}(n_k-n_{k-1})|x|^2 dx
	= 4\int_{\R^2}n_k dx + 2\int_{\R^2}n_k(\na B_\alpha*n_k)\cdot x dx.
\end{equation}
Young's inequality and estimate \eqref{2.nac} for $\na c[n_k]=\na B_\alpha*n_k$ 
show that
\begin{align*}
  2\,\bigg|\int_{\R^2}n_k(\na B_\alpha*n_k)\cdot x dx\bigg|
	&\le 2b\|n_k\|_X\int_{\R^2}n_k|x|dx \\
	&\le b\|n_k\|_X\int_{\R^2}n_k dx + b\|n_k\|_X\int_{\R^2}n_k|x|^2 dx. 
\end{align*}
Therefore, with $\|n_k\|_X\le\|n_{k-1}\|_X+1$, \eqref{2.Ik} gives
$$
  \big(1-\tau b(\|n_{k-1}\|_X+1)\big)\int_{\R^2}n_k|x|^2 dx
	\le \int_{\R^2}n_{k-1}|x|^2 dx + \tau(4+b\|n_k\|_X)\int_{\R^2}n_0 dx.
$$
Since $1-\tau b(\|n_{k-1}\|_X+1)>0$, we infer that the second moment of $n_k$
is bounded if the second moment of $n_{k-1}$ does so.
\end{proof}

Next, we turn to the finite-time blow up of semi-discrete solutions.
Set, for $\alpha>0$,
\begin{equation}\label{Istar}
   I^* := \frac{(M-8\pi)^2}{4\alpha M}, \quad
	 \tau^* := \frac{\pi(M-8\pi)}{2\alpha M^2}.
\end{equation}

\begin{theorem}[Blow-up for the implicit Euler scheme]\label{thm.bu.ie}
Assume that
$$
  n_0\ge 0, \quad I_0 := \int_{\R^2}n_0(x)|x|^2dx < \infty, \quad
	M := \int_{\R^2}n_0dx > 8\pi.
$$
Let $(n_k)\subset L^1(\R^2)\cap H^1(\R^2)$ 
be a sequence of nonnegative weak solutions to \eqref{2.ie}. 
Then this sequence is finite with maximal index $k_{\rm{max}}$, where, if $\alpha=0$,
\begin{equation}\label{3.kmax1}
  k_{\rm{max}} \le \frac{2\pi I_0}{\tau M(M-8\pi)}.
\end{equation}
In case $\alpha>0$, if additionally $I_0\le I^*$ and $\tau<\tau^*$ then
\begin{equation}\label{3.kmax2}
	k_{\rm{max}} \le \frac{2\pi I_0}{\tau M(M-8\pi - 2\sqrt{\alpha MI_0})}.
\end{equation}
\end{theorem}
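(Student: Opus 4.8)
The plan is to run a \emph{discrete virial argument}: track the second moment $I_k:=\int_{\R^2}n_k|x|^2\,dx$ and show that it decreases by a controlled amount at each step, so that $I_k\ge 0$ (forced by $n_k\ge 0$) can hold only for finitely many indices. I would start from the control-of-second-moment identity \eqref{2.Ik} established in the proof of Theorem \ref{thm.ie} and combine it with mass conservation $\int_{\R^2}n_k\,dx=M$ to get
\[
  \frac{I_k-I_{k-1}}{\tau}=4M+2\int_{\R^2}n_k(\na B_\alpha*n_k)\cdot x\,dx.
\]
The decisive manipulation is to symmetrize the nonlocal term. Inserting \eqref{2.naB} and averaging the resulting double integral with the copy obtained by exchanging $x$ and $y$, the kernel $\frac{(x-y)\cdot x}{|x-y|^2}$ collapses to $\tfrac12$, so that
\[
  2\int_{\R^2}n_k(\na B_\alpha*n_k)\cdot x\,dx
  =-\frac{1}{2\pi}\int_{\R^2}\int_{\R^2}n_k(x)n_k(y)g_\alpha(|x-y|)\,dx\,dy.
\]

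For $\alpha=0$ we have $g_0\equiv 1$ by \eqref{2.ga}, so the double integral equals $M^2$ and the recursion becomes the \emph{identity} $I_k-I_{k-1}=-\tfrac{\tau M}{2\pi}(M-8\pi)$. Iterating and using $I_k\ge 0$ yields $k\le 2\pi I_0/(\tau M(M-8\pi))$, which is \eqref{3.kmax1}; no restriction on $\tau$ or $I_0$ is needed here. For $\alpha>0$ I would bound $g_\alpha$ from below: writing $1-g_\alpha(r)=\int_0^\infty e^{-s}(1-e^{-\alpha r^2/(4s)})\,ds$ and using the elementary estimate $1-e^{-t}\le\sqrt t$ together with $\int_0^\infty s^{-1/2}e^{-s}\,ds=\sqrt\pi\le 2$ gives $1-g_\alpha(r)\le\sqrt\alpha\,r$. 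With $|x-y|\le|x|+|y|$ and the Cauchy-Schwarz bound $\int_{\R^2}n_k|x|\,dx\le M^{1/2}I_k^{1/2}$ the double integral is then bounded below by $M^2-2\sqrt\alpha\,M^{3/2}I_k^{1/2}$, producing the discrete analogue of \eqref{1.dIdt},
\[
  I_k-I_{k-1}\le\tau\big(-a+b\sqrt{I_k}\big),\qquad
  a:=\frac{M(M-8\pi)}{2\pi},\quad b:=\frac{\sqrt\alpha\,M^{3/2}}{\pi}.
\]
A short computation identifies $I^*=(a/b)^2$ and $\tau^*=\sqrt{I^*}/b$ from \eqref{Istar}, i.e.\ $b\sqrt{I^*}=a$ and $\tau^*b=\sqrt{I^*}$.

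The main obstacle is that this inequality is \emph{implicit}, since $\sqrt{I_k}$ appears on the right, so I cannot read off a per-step decrease directly. I would resolve this by first establishing the a priori bound $I_k\le I^*$ for every admissible $k$ by induction, with base case $I_0\le I^*$. For the induction step I argue by contradiction: if $I_{k-1}\le I^*$ but $I_k>I^*$, then using $a=b\sqrt{I^*}$ one has $I_k-I_{k-1}\le\tau b(\sqrt{I_k}-\sqrt{I^*})$, while trivially $I_k-I_{k-1}\ge(\sqrt{I_k}-\sqrt{I^*})(\sqrt{I_k}+\sqrt{I^*})$; dividing by the positive factor $\sqrt{I_k}-\sqrt{I^*}$ gives $\sqrt{I_k}+\sqrt{I^*}\le\tau b<\sqrt{I^*}$, which is impossible. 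This is exactly where $\tau<\tau^*$ enters, through $\tau b<\tau^*b=\sqrt{I^*}$. Once $I_k\le I^*$ is known, $b\sqrt{I_k}\le a$ forces monotonicity $I_k\le I_{k-1}$, hence $I_k\le I_0$ and $\sqrt{I_k}\le\sqrt{I_0}$. Feeding this back into the virial inequality gives the uniform decrease $I_k-I_{k-1}\le-\tau\tilde a$ with $\tilde a:=a-b\sqrt{I_0}=\tfrac{M}{2\pi}(M-8\pi-2\sqrt{\alpha MI_0})$, which is strictly positive precisely because $I_0<I^*$. Iterating $I_k\le I_0-k\tau\tilde a$ and invoking $I_k\ge 0$ forces the sequence to terminate, with the claimed bound \eqref{3.kmax2} on $k_{\rm max}$.
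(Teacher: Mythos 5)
Your argument is correct and follows the same overall route as the paper: the virial identity \eqref{2.Ik}, mass conservation, symmetrization of the nonlocal term, the implicit recursion $I_k-I_{k-1}\le\tau(b\sqrt{I_k}-a)$, and an induction-by-contradiction to keep the second moment subcritical. Indeed, your induction hypothesis $I_k\le I^*$ is literally the paper's hypothesis $f(I_k)\le 0$, and your step of dividing by $\sqrt{I_k}-\sqrt{I^*}$ to reach $\sqrt{I_k}+\sqrt{I^*}\le\tau b<\sqrt{I^*}$ is the paper's ``first factor is larger than or equal to one'' manipulation in disguise; likewise, your monotonicity-plus-uniform-decrement conclusion matches the paper's summation $I_j\le I_0+\tau j f(I_0)$. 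The one genuinely different ingredient is the kernel estimate for $\alpha>0$. The paper bounds $1-g_\alpha(r)$ by differentiating, identifying $\frac{d}{dr}(1-g_\alpha(r))=2\pi\alpha|z|B_1(\sqrt{\alpha}z)$ with $r=|z|$, invoking the logarithmic asymptotics of the Bessel potential $B_1$ near the origin together with a \emph{numerical} evaluation $K\approx 0.4662$, and then splitting into the regions $\sqrt{\alpha}|x-y|<1$ and $\sqrt{\alpha}|x-y|\ge 1$ (using $1-g_\alpha\le 1$ in the far region). You instead write $1-g_\alpha(r)=\int_0^\infty e^{-s}\big(1-e^{-\alpha r^2/(4s)}\big)ds$ and apply the elementary inequality $1-e^{-t}\le\sqrt{t}$, giving $1-g_\alpha(r)\le\frac{\sqrt{\pi}}{2}\sqrt{\alpha}\,r\le\sqrt{\alpha}\,r$ uniformly in $r>0$: no case split, no Bessel asymptotics, no numerics, and the same constant $\frac{\sqrt{\alpha}}{2\pi}$ in front of $\int\int|x-y|n_k(x)n_k(y)\,dy\,dx$ downstream, so the values of $I^*$, $\tau^*$, and the bound \eqref{3.kmax2} are unchanged. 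This is a cleaner, fully self-contained replacement for the only step of the paper's proof that appeals to a computation rather than a proof. One caveat that both arguments share: the final contradiction requires $a-b\sqrt{I_0}>0$, i.e.\ $I_0<I^*$ strictly; at $I_0=I^*$ the right-hand side of \eqref{3.kmax2} is infinite and the statement is vacuous — you flag this explicitly, while the paper leaves it implicit.
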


\begin{proof}
Let first $\alpha=0$ and let $n_k$ be a weak solution to \eqref{2.ie} with
$k>2\pi I_0/(\tau M(M-8\pi))$, i.e., we assume that \eqref{3.kmax1} does not hold. 
We set $I_k=\int_{\R^2}n_k|x|^2 dx$. Then, by \eqref{2.Ik}, for any $j\le k$,
\begin{align*}
  I_j - I_{j-1} &= 4\tau\int_{\R^2}n_jdx + 2\tau\int_{\R^2}n_j(\na B_0*n_j)
	\cdot xdx \\
	&= 4\tau M - \frac{\tau}{\pi}\int_{\R^2}\int_{\R^2}
	n_j(x)\frac{x\cdot(x-y)}{|x-y|^2}n_j(y)dydx,
\end{align*}
where we used the conservation of mass and the definition of $\na B_0*n_k$.
A symmetry argument leads to
\begin{align*}
  I_j - I_{j-1} &= 4\tau M - \frac{\tau}{2\pi}\int_{\R^2}\int_{\R^2}
	\frac{(x-y)\cdot(x-y)}{|x-y|^2}n_j(x)n_j(y)dydx \\
	&= 4\tau M - \frac{\tau}{2\pi}M^2 = \frac{\tau M}{2\pi}(8\pi-M).
\end{align*}
Summing this identity over $j=1,\ldots,k$ and taking into account the choice of $k$
gives
$$
  I_k = I_0 - \frac{k\tau M}{2\pi}(M-8\pi) < 0,
$$
which is a contradiction to $n_k\ge 0$.

Next, let $\alpha>0$. For the proof, we follow the lines of \cite[Section~6]{CaCo08}
but the end of the proof is different. Let $n_k\ge 0$ be a weak solution 
to \eqref{2.ie} such that \eqref{3.kmax2} is not true. 
Similarly as above, we find that
\begin{align}
  I_k - I_{k-1} &= 4\tau\int_{\R^2}n_kdx + 2\tau\int_{\R^2}n_k(\na B_\alpha*n_k)
	\cdot x dx \nonumber \\
	&= 4\tau M - \frac{\tau}{2\pi}\int_{\R^2}\int_{\R^2}g_\alpha(|x-y|)n_k(x)n_k(y)dydx 
	\nonumber \\
	&= \frac{\tau M}{2\pi}(8\pi-M) + \frac{\tau}{2\pi}\int_{\R^2}\int_{\R^2}
	\big(1-g_\alpha(|x-y|)\big)n_k(x)n_k(y)dydx, \label{3.aux}
\end{align}
where we recall definition \eqref{2.ga} of $g_\alpha$.

We need to estimate $1-g_\alpha(r)$. For this, let $z\in\R^2$, 
$r=|z|\in(0,1/\sqrt{\alpha})$. We compute
$$
  \frac{d}{dr}(1-g_\alpha(r)) = \frac{\alpha r}{2}\int_0^\infty \frac{1}{s}
	e^{-s-\alpha r^2/(4s)}ds = 2\pi\alpha |z| B_1(\sqrt{\alpha}z) 
	\le \sqrt{\alpha}K,
$$
where $K=2\pi\sup_{|x|<1}|x| B_1(|x|)$. It is known that $B_1(x)$ behaves
asymptotically as $-\log |x|$ as $|x|\to 0$, so $K$ is finite. 
A numerical computation shows that $\sup_{|x|<1}|x| B_1(|x|)\approx 0.0742$ and
$K\approx 0.4662$. We conclude that
$$
  0\le 1-g_\alpha(|z|)\le \sqrt{\alpha}K|z| \quad\mbox{for }0<\sqrt{\alpha}|z|<1.
$$
This bound, together with $1-g_\alpha(|z|)\le 1$, shows that the
last integral in \eqref{3.aux} can be estimated as follows:
\begin{align}
  \frac{\tau}{2\pi} & \int\int_{\{\sqrt{\alpha}|x-y|<1\}}
	\big(1-g_\alpha(|x-y|)\big)n_k(x)n_k(y)dydx \nonumber \\
	&\phantom{xx}{}
	+ \frac{\tau}{2\pi} \int\int_{\{\sqrt{\alpha}|x-y|\ge 1\}}
	\big(1-g_\alpha(|x-y|)\big)n_k(x)n_k(y)dydx \nonumber \\
	&\le \tau\frac{\sqrt{\alpha}}{2\pi}\max\{1,K\}
	\int_{\R^2}\int_{\R^2}|x-y|n_k(x)n_k(y)dydx \nonumber \\
	&\le \tau\frac{\sqrt{\alpha}}{\pi}M\int_{\R^2}|y|n_k(y)dy
	\le \tau\frac{\sqrt{\alpha}}{\pi} M^{3/2}I_k^{1/2}, \label{2.estg}
\end{align}
where 
we have applied the Cauchy-Schwarz inequality 
in the last step. We infer from \eqref{3.aux} that
$$
  I_k-I_{k-1} \le \frac{\tau}{2\pi}M(8\pi-M) 
	+ \tau\frac{\sqrt{\alpha}}{\pi} M^{3/2}I_k^{1/2}.
$$

Now, the argument differs from that one used in \cite{CaCo08}. Set
$\beta=\sqrt{\alpha}M^{3/2}/\pi$ and $\gamma=M(M-8\pi)/(2\pi)$. Then
we need to solve the recursive inequality
\begin{equation}\label{3.aux2}
  I_k-I_{k-1} \le \tau f(I_k) := \tau\big(\beta I_k^{1/2}-\gamma\big).
\end{equation}
By definition of $I^*$, we have $f(I^*)=0$. Since $f$ is increasing and $I_0\le I^*$, 
it holds that $f(I_0)\le 0$. We proceed by induction. Let $f(I_{k-1})\le 0$.
We suppose that $f(I_k)>0$ and show that this leads to a contradiction.
Inequality \eqref{3.aux2} is equivalent to 
$$
  \frac{I_k^{1/2}-I_{k-1}^{1/2}}{I_k^{1/2}-\gamma/\beta}
	\frac{I_k^{1/2}+I_{k-1}^{1/2}}{\beta} \le \tau,
$$
Since $f(I_{k-1})\le 0$, the first factor is larger than or equal to one, 
and taking into account $f(I_k)>0$ or $I_k^{1/2}>\gamma/\beta$, we deduce that
$$
  \frac{\gamma}{\beta^2} < \frac{I_k^{1/2}}{\beta}
	\le \frac{I_k^{1/2}+I_{k-1}^{1/2}}{\beta} \le \tau,
$$
which contradicts the smallness condition $\tau<\tau^*=\gamma/\beta^2$.
We infer that $f(I_k)\le 0$. 

Then, summing \eqref{3.aux2} from $k=1,\ldots,j$,
$$
  I_j \le I_0 + \tau\sum_{k=1}^j f(I_k) \le I_0 + \tau jf(I_0).
$$
We deduce that $I_j$ becomes negative for $j>-I_0/(\tau f(I_0))$ which contradicts
$n_k\ge 0$. This completes the proof.
\end{proof}

\begin{remark}[Semi-discrete energy dissipation]\label{rem.energy}\rm
It is possible to design semi-discrete schemes that dissipate the
discrete free energy 
$$
  E_k = \int_{\R^2}\bigg(n_k(\log n_k-1) - \frac12 c_kn_k\bigg)dx
$$
(and conserve the mass and preserve the positivity).
An example, taken from \cite{SaSu05}, is the semi-implicit scheme
\begin{equation}\label{2.energy}
  \tau^{-1}(n_k-n_{k-1}) = \diver(n_k-n_k\na c_{k-1}), \quad
	-\Delta c_{k-1}+\alpha c_{k-1} = n_{k-1}\quad \mbox{in }\R^2.
\end{equation}
Indeed, by the convexity of $s\mapsto s\log s$, we obtain
\begin{align*}
  \int_{\R^2} & \big(n_k(\log n_k-1) - n_{k-1}(\log n_{k-1}-1)\big)dx \\
	&\le \int_{\R^2} (n_k-n_{k-1})\log n_k dx 
	= \tau\int_{\R^2}\bigg(-\frac{|\na n_k|^2}{n_k} + \na c_{k-1}\cdot\na n_k\bigg)dx.
\end{align*}
Furthermore, translating the computation in \cite[Section 5.2.1]{Per07} to the 
semi-discrete case,
\begin{align*}
  \frac12\int_{\R^2} & (n_kc_k-n_{k-1}c_{k-1})dx
	= \frac12\int_{\R^2}\big((n_k-n_{k-1})c_k + n_{k-1}(c_k-c_{k-1})\big)dx \\
	&= \frac12\int_{\R^2}\big((n_k-n_{k-1})c_k + \big(-\Delta(c_k-c_{k-1})
	+ \alpha(c_k-c_{k-1})\big)c_{k-1}\big)dx \\
	&= \int_{\R^2}(n_k-n_{k-1})c_{k-1} dx
	= -\tau\int_{\R^2}\big(\na n_k\cdot\na c_{k-1} + n_k|\na c_{k-1}|^2\big)dx.
\end{align*}
Subtracting the latter from the former expression, we conclude that
$$
  E_k-E_{k-1} \le -\tau\int_{\R^2} n_k|\na(\log n_k-c_{k-1})|^2 dx \le 0.
$$
Unfortunately, scheme \eqref{2.energy} does not allow us to apply the symmetrization
argument used in the proof of Theorem \ref{thm.bu.ie}, since the drift part
depends on two different time steps.
The question whether \eqref{2.energy} admits solutions that blow up in finite time
remains open.
\qed
\end{remark}


\section{Higher-order schemes}\label{sec.hos}

We investigate BDF and general Runge-Kutta schemes.

\subsection{BDF-2 scheme}

The scheme reads as
\begin{equation}\label{bdf2}
  \frac{1}{\tau}\bigg(\frac32 n_k - 2n_{k-1} + \frac12 n_{k-2}\bigg)
	= \diver\big(\na n_k - n_k(\na B_\alpha*n_k)\big) \quad\mbox{in }\R^2
\end{equation}
for $k\ge 2$, where $n_0$ is given and $n_1$ is computed
from $n_0$ using the implicit Euler scheme.

\begin{lemma}[Existence for the BDF-2 scheme]\label{thm.bdf2}
Let $\alpha\ge 0$, 
$n_{k-2}$, $n_{k-1}\in L^1(\R^2)\cap L^\infty(\R^2)$, and
$$
  \tau \le \frac32\frac{1}{(\pi+1/2)^2}\frac{1}{(1+\|2n_{k-1}-\frac12 n_{k-2}\|_X)^4}.
$$
Then there exists a weak solution $n_k\in L^1(\R^2)\cap L^\infty(\R^2)\cap H^1(\R^2)$
to \eqref{bdf2} with the following properties:
\begin{itemize}
\item conservation of mass: $\int_{\R^2}n_k dx = \int_{\R^2}n_{k-1}dx$,
\item control of second moment: if $\int_{\R^2}n_{k-1}|x|^2dx<\infty$ then
$\int_{\R^2}n_k|x|^2dx<\infty$.
\end{itemize}
\end{lemma}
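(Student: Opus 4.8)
The plan is to recognise the BDF-2 step \eqref{bdf2} as a single implicit Euler step with a rescaled time step and a modified ``previous'' datum, so that Theorem \ref{thm.ie} can be invoked. Collecting the $n_k$-term on the left-hand side, set $\tilde\tau := \tfrac23\tau$ and $\hat n := \tfrac23\big(2n_{k-1}-\tfrac12 n_{k-2}\big)=\tfrac43 n_{k-1}-\tfrac13 n_{k-2}$. Since the coefficient of $n_k$ in \eqref{bdf2} is $\tfrac{1}{\tau}\cdot\tfrac32=\tfrac{1}{\tilde\tau}$, a direct computation shows that \eqref{bdf2} is exactly
$$
  \frac{1}{\tilde\tau}(n_k-\hat n) = \diver\big(\na n_k - n_k(\na B_\alpha*n_k)\big)
  \quad\mbox{in }\R^2,
$$
i.e.\ the implicit Euler scheme \eqref{2.ie} with the pair $(\tau,n_{k-1})$ replaced by $(\tilde\tau,\hat n)$. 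As $X$ is a vector space, $\hat n\in X$ with $\|\hat n\|_X=\tfrac23\|2n_{k-1}-\tfrac12 n_{k-2}\|_X$.

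Next I would check that the smallness hypothesis on $\tau$ is precisely the one needed to apply Theorem \ref{thm.ie} to $(\tilde\tau,\hat n)$. The assumed bound gives $\tilde\tau=\tfrac23\tau\le(\pi+1/2)^{-2}(1+\|2n_{k-1}-\tfrac12 n_{k-2}\|_X)^{-4}$, and since $\|\hat n\|_X=\tfrac23\|2n_{k-1}-\tfrac12 n_{k-2}\|_X$ is strictly smaller than $\|2n_{k-1}-\tfrac12 n_{k-2}\|_X$, the factor $\tfrac23$ supplies exactly the slack needed to turn this into the strict inequality $\tilde\tau<(\pi+1/2)^{-2}(1+\|\hat n\|_X)^{-4}$ required by Theorem \ref{thm.ie}. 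That theorem then produces a weak solution $n_k\in X\cap H^1(\R^2)$ of the displayed equation, hence of \eqref{bdf2}.

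The two claimed properties follow from the corresponding conclusions of Theorem \ref{thm.ie} applied to $\hat n$. For the mass it yields $\int_{\R^2}n_k\,dx=\int_{\R^2}\hat n\,dx=\tfrac43\int_{\R^2}n_{k-1}\,dx-\tfrac13\int_{\R^2}n_{k-2}\,dx$; combined with the inductive hypothesis $\int n_{k-1}=\int n_{k-2}$ this gives $\int n_k=\int n_{k-1}$. The hypothesis itself propagates along the recursion from the base case $\int n_1=\int n_0$, which holds because $n_1$ is computed by an implicit Euler step (Theorem \ref{thm.ie}). For the second moment, $\int_{\R^2}\hat n\,|x|^2dx$ is finite since $\hat n$ is a linear combination of $n_{k-1}$ and $n_{k-2}$, both of which have finite second moments; the second-moment conclusion of Theorem \ref{thm.ie} then forces $\int_{\R^2}n_k|x|^2dx<\infty$.

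The point requiring care, and the structural reason the statement is weaker than Theorem \ref{thm.ie}, is that $\hat n$ is in general sign-changing even when $n_{k-1},n_{k-2}\ge0$. Hence the nonnegativity step cannot be reproduced: the solution furnished by Theorem \ref{thm.ie} solves the truncated weak formulation \eqref{2.aux} (with $n_k^+$ in the drift), and $n_k$ need not be nonnegative. This is exactly why the lemma asserts no positivity, and it is the obstruction that later forces a separate treatment of the blow-up argument for the BDF schemes. Beyond this loss of sign and the two-level bookkeeping of the mass recursion, the reduction and the translation of the smallness constant are entirely routine.
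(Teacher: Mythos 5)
Your reduction of \eqref{bdf2} to an implicit Euler step with $\tilde\tau=\tfrac23\tau$ and datum $\hat n=\tfrac43 n_{k-1}-\tfrac13 n_{k-2}$ is exactly the paper's reformulation (the paper writes it as $-\Delta n_k+\tfrac{3}{2\tau}n_k=\tfrac1\tau(2n_{k-1}-\tfrac12 n_{k-2})-\diver(n_k\na B_\alpha*n_k)$, which is the same thing), and your bookkeeping of the smallness constant and of the mass recursion is fine --- indeed more explicit than the paper's. But there is a genuine gap in invoking Theorem \ref{thm.ie} as a black box. The proof of that theorem does not construct a solution of \eqref{2.ie} directly: it constructs, by fixed point, a solution of the \emph{truncated} problem \eqref{2.aux}, in which the drift carries $n_k^+$, and only afterwards (Step 3) uses the hypothesis that the previous datum is nonnegative to conclude $n_k\ge 0$, whence $n_k^+=n_k$ and the truncated equation coincides with \eqref{2.ie}. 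For your datum $\hat n$, which is sign-changing even when $n_{k-1},n_{k-2}\ge 0$, that step is unavailable, so what the construction delivers is a solution of
$$
  \frac{1}{\tilde\tau}(n_k-\hat n)=\diver\big(\na n_k - n_k^+\,\na B_\alpha*n_k\big),
$$
which differs from \eqref{bdf2} wherever $n_k<0$. Your final paragraph states this fact verbatim but misdiagnoses its consequence: the issue is not merely that the positivity assertion is lost (the lemma never claims it), it is that the object you obtain need not solve the equation the lemma is about. This contradicts your earlier sentence ``hence of \eqref{bdf2}''.

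The repair is the one the paper makes: one cannot cite Theorem \ref{thm.ie}, one must rerun its proof with $n_k^+$ replaced by $n_k$ throughout --- in the fixed-point map \eqref{2.n}, in the regularity step, and in the mass and second-moment steps. This costs nothing, since every estimate used there only exploits bounds of the form $\|n^+\|_X\le\|n\|_X$ and $\|n^+-m^+\|_X\le\|n-m\|_X$, which hold trivially with the truncation removed; the contraction constant and the admissible range of $\tau$ are unchanged. With that modification (and your otherwise correct translation of the time step and datum), the argument is complete; as stated, however, the proposal proves existence for the wrong equation.
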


\begin{proof}
The proof is exactly as for Theorem \ref{thm.ie} since we can formulate scheme
\eqref{bdf2} as
$$
  -\Delta n_k + \frac{3}{2\tau}n_k = \frac{1}{\tau}\bigg(2n_{k-1}-\frac12 n_{k-2}\bigg)
	- \diver(n_k\na B_\alpha*n_k),
$$
and the first term on the right-hand side plays the role of $n_{k-1}$ in the
implicit Euler scheme. The only difference to the proof of Theorem \ref{thm.ie} 
is that we replace $n_k^+$ in \eqref{2.aux} by $n_k$, since the truncation was
only needed to show the nonnegativity of $n_k$, which we are not able to show
for the BDF-2 scheme.
\end{proof}

\begin{theorem}[Blow-up for the BFD-2 scheme]\label{thm.bu.bdf2}
Assume that $\alpha\ge 0$ and
$$
  n_0\ge 0, \quad I_0 := \int_{\R^2}n_0(x)|x|^2dx < \infty, \quad
	M:=\int_{\R^2}n_0 dx > 8\pi.
$$
Let $(n_k)\subset L^1(\R^2)\cap H^1(\R^2)$ 
be a sequence of nonnegative weak solutions to \eqref{bdf2}. Then this
sequence is finite with maximal index $k_{\rm max}$, where $k_{\rm max}$ is
bounded from above according to \eqref{3.kmax1} (if $\alpha=0$) or
\eqref{3.kmax2} (if $\alpha>0$ and additionally $I_0\le I^*$ 
and $\tau\le\tau^*$, where $I^*$ and $\tau^*$ are defined in \eqref{Istar}).
\end{theorem}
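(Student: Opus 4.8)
The strategy is to reproduce the virial argument of Theorem \ref{thm.bu.ie}, but now applied to the three-term BDF-2 difference operator. The key observation is that the nonlocal drift term in \eqref{bdf2} is structurally identical to that in the implicit Euler scheme \eqref{2.ie}; only the time-difference operator on the left-hand side changes. Consequently, testing \eqref{bdf2} against the approximations $\phi_R$ and $\psi_R=|x|^2\phi_R$ from Steps 4 and 5 of the proof of Theorem \ref{thm.ie} (the mollification and limit passage carry over verbatim, since they depend only on the right-hand side and on $n_k\ge 0$ having finite mass and second moment) yields mass conservation $\int n_k=\int n_{k-1}=M$ and the discrete virial identity
\begin{equation}\label{bdf2.Ik}
  \frac{1}{\tau}\bigg(\frac32 I_k - 2I_{k-1} + \frac12 I_{k-2}\bigg)
	= 4M + 2\int_{\R^2}n_k(\na B_\alpha*n_k)\cdot x\,dx,
\end{equation}
where $I_k=\int_{\R^2}n_k|x|^2dx$. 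The right-hand side is exactly the quantity already estimated in the proof of Theorem \ref{thm.bu.ie}.

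First I would treat $\alpha=0$. By the same symmetrization as before, the right-hand side of \eqref{bdf2.Ik} equals $(M/2\pi)(8\pi-M)$, so setting $\delta_k:=\frac32 I_k-2I_{k-1}+\frac12 I_{k-2}$ we obtain the clean identity $\delta_k=\tau(M/2\pi)(8\pi-M)=:-\tau\gamma_0$ with $\gamma_0>0$. The task then reduces to a purely algebraic one: show that the nonnegative sequence $(I_k)$ cannot satisfy $\frac32 I_k-2I_{k-1}+\frac12 I_{k-2}=-\tau\gamma_0$ indefinitely. The natural device is to rewrite the second-order difference in terms of the first differences $d_k:=I_k-I_{k-1}$. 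Since $\frac32 I_k-2I_{k-1}+\frac12 I_{k-2}=\frac32 d_k-\frac12 d_{k-1}$, the recursion becomes $\frac32 d_k-\frac12 d_{k-1}=-\tau\gamma_0$, a first-order linear recursion for $d_k$ with the decaying homogeneous root $1/3$. Its solution is $d_k=-\tau\gamma_0+(1/3)^{k-1}(d_1+\tau\gamma_0)$, so $d_k\to-\tau\gamma_0$ and in fact $I_k=I_0+\sum_{j\le k}d_j$ is eventually bounded above by an affine function of $k$ with negative slope $-\tau\gamma_0$; hence $I_k<0$ for $k$ large, contradicting $n_k\ge 0$. Tracking the constants gives the bound \eqref{3.kmax1}.

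For $\alpha>0$ the right-hand side of \eqref{bdf2.Ik} is bounded exactly as in \eqref{2.estg}, giving $\frac32 I_k-2I_{k-1}+\frac12 I_{k-2}\le\tau f(I_k)=\tau(\beta I_k^{1/2}-\gamma)$ with $\beta,\gamma$ as defined after \eqref{3.aux2}. I would again pass to first differences and aim to show $f(I_k)\le 0$ for all $k$ by induction under the hypotheses $I_0\le I^*$, $\tau\le\tau^*$, after which summation of $\frac32 d_k-\frac12 d_{k-1}\le\tau f(I_k)\le 0$ forces $I_k$ negative and produces \eqref{3.kmax2}. \textbf{The main obstacle} lies precisely here: the induction step in Theorem \ref{thm.bu.ie} used the monotone first-order structure $I_k-I_{k-1}\le\tau f(I_k)$, but the BDF-2 operator couples three consecutive moments with a \emph{negative} coefficient on $I_{k-1}$, so $f(I_k)\le 0$ no longer follows from $f(I_{k-1})\le 0$ by the same one-line factorization. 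The cleanest repair is to establish monotonicity $d_k\le 0$ (equivalently $I_k\le I_{k-1}$) as an auxiliary invariant: from $\frac32 d_k=\frac12 d_{k-1}+\tau f(I_k)$ one gets $d_k\le 0$ provided $d_{k-1}\le 0$ and $f(I_k)\le 0$, and conversely $d_k\le 0$ gives $I_k\le I_{k-1}$ whence $f(I_k)\le f(I_{k-1})$, closing a simultaneous induction on the pair $\{d_k\le 0,\ f(I_k)\le 0\}$. Seeding the induction requires checking the base case for $k=1,2$, where $n_1$ comes from the implicit Euler step already analyzed in Theorem \ref{thm.bu.ie}, so that $I_1\le I_0\le I^*$ and $f(I_1)\le 0$ are inherited from the Euler estimate; this dovetailing of the BDF-2 recursion with its implicit-Euler starting value is the one place where care is genuinely needed.
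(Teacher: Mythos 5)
Your reduction to the virial recursion and your treatment of $\alpha=0$ are essentially the paper's own argument: the paper iterates $I_k-I_{k-1}=\frac13(I_{k-1}-I_{k-2})-\frac23\tau\gamma$ with the implicit-Euler seed $I_1-I_0=-\tau\gamma$ to conclude $I_k-I_{k-1}=-\tau\gamma$ for every $k$, which is exactly your closed-form solution of the first-difference recursion. Note that you do need to invoke the Euler seed here and not only in the $\alpha>0$ case: "eventually affine with negative slope" gives blow-up in finite time, but only $d_1=-\tau\gamma$ turns your formula into $d_k\equiv-\tau\gamma$ and hence the precise bound \eqref{3.kmax1}.

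The case $\alpha>0$, however, contains a genuine gap: your "simultaneous induction" on the pair $\{d_k\le 0,\ f(I_k)\le 0\}$ is circular. The induction hypothesis supplies $d_{k-1}\le 0$ and $f(I_{k-1})\le 0$; at step $k$ you derive $d_k\le 0$ \emph{from} $f(I_k)\le 0$, and $f(I_k)\le 0$ \emph{from} $d_k\le 0$, so neither statement is ever established --- you have only shown that the two are mutually consistent. Symptomatically, the hypothesis $\tau\le\tau^*$ is never used in your argument, and without it the step cannot be closed: the inequality $I_k\le I_{k-1}+\frac{2\tau}{3}\big(\beta I_k^{1/2}-\gamma\big)$ with $I_{k-1}\le I^*$ does admit $I_k>I^*$ when $\tau$ is large (the parabola $x\mapsto x^2-\frac23\tau\beta x$ is decreasing past $x=\gamma/\beta$ in that regime). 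The paper breaks the circle by contradiction: assume $f(I_k)>0$, i.e.\ $I_k^{1/2}>\gamma/\beta$; then $d_{k-1}\le 0$ gives $I_k-I_{k-1}\le\frac{2\tau\beta}{3}\big(I_k^{1/2}-\gamma/\beta\big)$, and factoring $I_k-I_{k-1}=\big(I_k^{1/2}-I_{k-1}^{1/2}\big)\big(I_k^{1/2}+I_{k-1}^{1/2}\big)$ together with $I_{k-1}^{1/2}\le\gamma/\beta$, so that $\big(I_k^{1/2}-I_{k-1}^{1/2}\big)/\big(I_k^{1/2}-\gamma/\beta\big)\ge 1$, yields $I_k^{1/2}\le\frac{2\tau\beta}{3}\le\frac{2\gamma}{3\beta}<\gamma/\beta$, a contradiction; this is precisely where $\tau\le\tau^*=\gamma/\beta^2$ enters. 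You must add this (or the equivalent quadratic-inequality) argument. A second, smaller point: even granting $f(I_k)\le 0$ and monotonicity, your proposed "summation of $\frac32 d_k-\frac12 d_{k-1}\le\tau f(I_k)$" recovers \eqref{3.kmax2} only up to a factor $\frac32$; to get the stated constant you should instead iterate $d_k\le\frac13 d_{k-1}+\frac23\tau f(I_0)$ geometrically with the Euler seed $d_1\le\tau f(I_0)$, obtaining $d_k\le\tau f(I_0)$ for every $k$ and hence $I_k\le I_0+k\tau f(I_0)$, as the paper does.
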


\begin{proof}
The proof is similar to that one of Theorem \ref{thm.bu.ie} but the iteration
argument is different. First, let $\alpha=0$.
We know from the proof of Theorem \ref{thm.bu.ie} that
\begin{equation}\label{3.ie}
  I_1-I_0=-\tau\gamma, 
\end{equation}
where we recall that $\gamma=M(M-8\pi)/(2\pi)$. Using the same approximation
of $|x|^2$ as in step 5 of the proof of Theorem \ref{thm.bu.ie}, 
we can justify the weak formulation (see \eqref{2.Ik})
\begin{align*}
  \int_{\R^2}(n_j-n_{j-1})|x|^2dx
	&= \frac13\int_{\R^2}(n_{j-1}-n_{j-2})|x|^2 dx
	+ \frac83\tau\int_{\R^2}n_jdx \\
	&\phantom{xx}{}+ \frac23\tau\int_{\R^2}n_k(\na B_\alpha*n_k)\cdot xdx.
\end{align*}
The last integral can be calculated as in the proof of Theorem \ref{thm.bu.ie}
and we end up with
$$
  I_k-I_{k-1} = \frac13(I_{k-1}-I_{k-2}) + \frac23\frac{\tau}{2\pi}M(8\pi-M).
$$
We iterate this identity and insert \eqref{3.ie}:
$$
  I_k-I_{k-1} = \frac{1}{3^{k-1}}(I_1-I_0) 
	- \frac23\tau\gamma\sum_{j=1}^{k-1}\frac{1}{3^j}
	= -\frac{\tau\gamma}{3^{k-1}} - \tau\gamma\bigg(1-\frac{1}{3^{k-1}}\bigg)
	= -\tau\gamma.
$$
As in the proof of Theorem \ref{thm.bu.ie}, this leads to a contradiction for
large values of $k$. 

Next, let $\alpha>0$. As the first step is computed with the implicit Euler scheme,
the proof of Theorem \ref{thm.bu.ie} gives the estimate
$$
  I_1 - I_0 \le \tau f(I_1),
$$
where $f(s)=\beta\sqrt{s}-\gamma$ and $\beta=\sqrt{\alpha}M^{3/2}/\pi$. 
Moreover, since $f(I_0)\le 0$, we know that $I_1\le I_0$, and this gives $f(I_1)\le 0$.

For the following time steps, we obtain
\begin{equation}\label{3.aux3}
  I_k - I_{k-1} \le \frac13(I_{k-1}-I_{k-2}) + \frac{2\tau}{3}f(I_k), \quad k\ge 2.
\end{equation}
Let us assume, by induction, that $I_{k-1}\le I_{k-2}$ and $f(I_{k-1})\le 0$
for $k\ge 2$. We will prove that $I_k\le I_{k-1}$ and $f(I_k)\le 0$. 
Assume by contradiction that $f(I_k)>0$, which is equivalent to 
$I_k^{1/2}>\gamma/\beta$. Then, using
$I_{k-1}-I_{k-2}\le 0$, we reformulate \eqref{3.aux3} as
$$
  \frac{I_k^{1/2}-I_{k-1}^{1/2}}{I_k^{1/2}-\gamma/\beta}
	\frac{I_k^{1/2}+I_{k-1}^{1/2}}{\beta} \le \frac{2\tau}{3}.
$$
Since $f(I_{k-1})\le 0$, the first factor is larger than or equal to one, so
$$
  \frac{I_k^{1/2}+I_{k-1}^{1/2}}{\beta} \le \frac{2\tau}{3},
$$
and $\tau\le\gamma/\beta^2$ leads to
$$
  \frac{I_k^{1/2}}{\beta}
	\le \frac{I_k^{1/2}+I_{k-1}^{1/2}}{\beta} 
	\le \frac{2\tau}{3} \le \frac23\frac{\gamma}{\beta^2}
$$
or $I_k^{1/2}<\gamma/\beta$, which contradicts $f(I_k)>0$.
We conclude that $f(I_k)\le 0$ and therefore, by \eqref{3.aux3}, 
$I_k\le I_{k-1}\le 0$, showing the claim.

We infer that $f(I_k)\le f(I_{k-1})\le\cdots\le f(I_0)$, 
since $f$ is nondecreasing. Hence, again from \eqref{3.aux3} and using
$I_1-I_0\le \tau f(I_0)$,
\begin{align*}
  I_k-I_{k-1} &\le \frac13(I_{k-1}-I_{k-2}) + \frac{2\tau}{3}f(I_0) 
	\le \frac{1}{3^{k-1}}(I_1-I_0) + 2\tau \sum_{j=1}^{k-1}\frac{1}{3^j}f(I_0) \\
	&\le \frac{\tau}{3^{k-1}}f(I_0) + \tau\bigg(1-\frac{1}{3^{k-1}}\bigg)f(I_0)
	= \tau f(I_0).
\end{align*}
Thus, $I_k\le I_0+\tau\sum_{j=1}^k f(I_0)=I_0+\tau k f(I_0)$, and this
leads to the contradiction $I_k<0$ for sufficiently large $k\in\N$, 
completing the proof.
\end{proof}


\subsection{BDF-3 scheme}

The finite-time blow-up can be also shown for solutions to higher-order BDF
schemes, at least in the case $\alpha=0$. As an example, let us consider the
BDF-3 scheme
\begin{equation}\label{bdf3}
  \frac{1}{6\tau}(11n_k - 18n_{k-1} + 9n_{k-2} - 2n_{k-3})
	= \diver(\na n_k - n_k\na B_0*n_k) \quad\mbox{in }\R^2,
\end{equation}
where $n_{k-1}$, $n_{k-2}$, and $n_{k-3}$ are given.
The existence of solutions can be shown as in Lemma \ref{thm.bdf2}.
First, we prove that the scheme preserves the mass.

\begin{lemma}[Conservation of mass]\label{lem.BDF3}
Let $n_0$, $n_1$, and $n_2$ be given and having the same mass $M$. Then the
solution $n_k$ has the same mass, $\int_{\R^2}n_kdx=M$, for $k\ge 3$.
\end{lemma}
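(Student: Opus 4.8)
The plan is to reduce the statement to a linear recurrence for the masses $m_k := \int_{\R^2} n_k\,dx$ and then settle it by induction. Formally, integrating \eqref{bdf3} over $\R^2$ kills the right-hand side, since it is a divergence, and leaves
\begin{equation*}
  \frac{1}{6\tau}\big(11 m_k - 18 m_{k-1} + 9 m_{k-2} - 2 m_{k-3}\big) = 0,
  \qquad k\ge 3.
\end{equation*}
The only genuine analytic point is the justification of this identity, since the constant function $1$ is not admissible as a test function. I would handle this exactly as in step 4 of the proof of Theorem \ref{thm.ie}: use the radially symmetric cut-off $\phi_R$ from \eqref{2.cutoff}, mollify it to $\phi_R^\eps$, insert it as a test function in the weak formulation of \eqref{bdf3}, and let $\eps\to0$ and then $R\to\infty$. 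The diffusion term contributes $\int_{\R^2} n_k\Delta\phi_R\,dx$, bounded by $C_2 R^{-2}\|n_k\|_{L^1(\R^2)}$ via \eqref{2.phi}, and the nonlocal drift term contributes $\int_{\R^2} n_k(\na B_0*n_k)\cdot\na\phi_R\,dx$, which after the same symmetrization as in step 4 is bounded by $C_1(4\pi R^2)^{-1}\|n_k\|_{L^1(\R^2)}^2$. Both bounds vanish as $R\to\infty$, yielding the displayed recurrence.

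Since $n_k\in L^1(\R^2)$ by the existence result (obtained as in Lemma \ref{thm.bdf2}), each $m_k$ is finite and the recurrence is well-posed. Rewriting it as $11 m_k = 18 m_{k-1} - 9 m_{k-2} + 2 m_{k-3}$ and using the hypothesis $m_0=m_1=m_2=M$, the base case is
\begin{equation*}
  11 m_3 = 18M - 9M + 2M = 11M,
\end{equation*}
so $m_3=M$. The key algebraic fact is that the BDF-3 coefficients sum to zero, $11-18+9-2=0$, which is exactly the statement that the constant sequence $m_k\equiv M$ is a fixed point of the recurrence. Proceeding by induction, if $m_{k-1}=m_{k-2}=m_{k-3}=M$ then $11 m_k = (18-9+2)M = 11M$, whence $m_k=M$; this closes the induction for all $k\ge3$.

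I do not expect any serious obstacle here, because the one delicate ingredient — the cut-off approximation and the vanishing of the nonlocal term through symmetry, which crucially uses $\alpha=0$ so that $g_0\equiv1$ plays no role in the drift — has already been carried out in step 4 of Theorem \ref{thm.ie} and may simply be invoked. The remaining content is the elementary three-term induction, so the proof is short: establish the recurrence by the approximation argument, observe that the coefficients sum to zero, and conclude mass conservation by induction on $k$.
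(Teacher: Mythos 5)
Your proposal is correct and follows essentially the same route as the paper: justify the mass identity by inserting the mollified cut-off from step 4 of Theorem \ref{thm.ie} into the weak formulation of \eqref{bdf3}, pass to the limits $\eps\to0$ and $R\to\infty$ using the symmetrization bound, and then run the three-term induction resting on $11-18+9-2=0$. The only inaccuracy is your side remark that the symmetrization ``crucially uses $\alpha=0$''; in fact the argument of step 4 of Theorem \ref{thm.ie} handles all $\alpha\ge 0$ via the bound $g_\alpha\le 1$, though this does not affect the validity of your proof of the present lemma.
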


\begin{proof}
We proceed by induction. Employing the mollified version of the
cut-off function \eqref{2.cutoff} as a test function in \eqref{bdf3} 
and passing to the limits $\eps\to 0$ and $R\to\infty$, we arrive at
$$
  \frac{1}{6\tau}\int_{\R^2}(11n_k - 18n_{k-1} + 9n_{k-2} - 2n_{k-3})dx = 0.
$$
If $k=3$, this is equivalent to
$$
  \int_{\R^2}n_3 dx = \int_{\R^2}\bigg(\frac{18}{11}n_2 - \frac{9}{11}n_1
	+ \frac{2}{11}n_0\bigg)dx = M,
$$
since $n_2$, $n_1$, and $n_0$ have the same mass $M$. For the induction step,
if $n_{k-1}$, $n_{k-2}$, and $n_{k-3}$ for $k>4$ have the same mass, the same
argument as above shows that $\int_{\R^2}n_k=M$.
\end{proof}

We recall that $I_k=\int_{\R^2}n_k|x|^2dx$ for $k\in\N_0$ 
and $\gamma=M(M-8\pi)/(2\pi)$.

\begin{theorem}[Blow-up for the BDF-3 scheme]\label{thm.bu.bdf3}
Assume that $\alpha=0$, $I_2-I_1=I_1-I_0=-\tau\gamma$, and
$$
  n_0\ge 0, \quad I_0 := \int_{\R^2}n_0(x)|x|^2dx < \infty, \quad
	M:=\int_{\R^2}n_0 dx > 8\pi.
$$
Let $(n_k)\subset L^1(\R^2)\cap H^1(\R^2)$ 
be a sequence of nonnegative weak solutions to \eqref{bdf3}. Then this
sequence is finite with maximal index $k_{\rm max}$, where $k_{\rm max}$ is
bounded from above according to \eqref{3.kmax1} (if $\alpha=0$) or
\eqref{3.kmax2} (if $\alpha>0$ and additionally $I_0\le I^*$ 
and $\tau\le\tau^*$, where $I^*$ and $\tau^*$ are defined in \eqref{Istar}).
\end{theorem}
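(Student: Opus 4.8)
The plan is to follow the $\alpha=0$ arguments of Theorems \ref{thm.bu.ie} and \ref{thm.bu.bdf2}: derive a linear recursion for the second moments $I_k=\int_{\R^2}n_k|x|^2\,dx$ and then solve it explicitly to force $I_k<0$. Since $\alpha=0$, the only ingredients needed are the nonnegativity of the $n_k$, mass conservation (Lemma \ref{lem.BDF3}), and the symmetry of the nonlocal drift.

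First I would justify the weak formulation tested against $|x|^2$. Using the mollified cut-off $\psi_R^\eps=(|x|^2\phi_R)*\rho^\eps$ exactly as in step 5 of the proof of Theorem \ref{thm.ie}, and passing to the limits $\eps\to 0$ and then $R\to\infty$, scheme \eqref{bdf3} should give
$$\frac{1}{6\tau}\big(11I_k-18I_{k-1}+9I_{k-2}-2I_{k-3}\big)=4\int_{\R^2}n_k\,dx+2\int_{\R^2}n_k(\na B_0*n_k)\cdot x\,dx.$$
Mass conservation yields $\int_{\R^2}n_k\,dx=M$, and the symmetrization carried out in the proof of Theorem \ref{thm.bu.ie} shows the right-hand side equals $-\gamma$ with $\gamma=M(M-8\pi)/(2\pi)$. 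Hence I obtain the third-order recursion
$$11I_k-18I_{k-1}+9I_{k-2}-2I_{k-3}=-6\tau\gamma,\quad k\ge 3.$$

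Next I would pass to the differences $d_k=I_k-I_{k-1}$. Because the coefficients satisfy $11-18+9-2=0$, the recursion collapses to the second-order one
$$11d_k-7d_{k-1}+2d_{k-2}=-6\tau\gamma,\quad k\ge 3.$$
The constant sequence $d_k=-\tau\gamma$ solves this (since $11-7+2=6$), and the hypotheses $I_2-I_1=I_1-I_0=-\tau\gamma$ supply exactly the two initial values $d_1=d_2=-\tau\gamma$. As a second-order linear recursion is uniquely determined by two consecutive data, uniqueness gives $d_k=-\tau\gamma$ for all $k$, whence $I_k=I_0-k\tau\gamma$.

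Finally, as in Theorem \ref{thm.bu.ie}, $I_k<0$ once $k>I_0/(\tau\gamma)=2\pi I_0/(\tau M(M-8\pi))$, which contradicts $n_k\ge 0$ and yields \eqref{3.kmax1}. I do not anticipate a genuine obstacle: with $\alpha=0$ the drift produces an exact identity rather than the inequality of the $\alpha>0$ case, so no monotonicity or induction step (as in the BDF-2 proof for $\alpha>0$) is required. The only points needing care are the bookkeeping when reducing the third-order recursion to the difference recursion, and checking that the prescribed data $I_0$, $I_1$, $I_2$ are consistent with the affine solution $I_k=I_0-k\tau\gamma$.
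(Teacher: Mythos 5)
Your proposal is correct and follows essentially the same route as the paper: test the scheme against the mollified approximation of $|x|^2$, use mass conservation and the symmetry of the Newtonian drift to get $\tfrac{1}{6}(11I_k-18I_{k-1}+9I_{k-2}-2I_{k-3})=-\tau\gamma$, and then show the differences $I_k-I_{k-1}$ stay equal to $-\tau\gamma$ so that $I_k=I_0-k\tau\gamma$ eventually becomes negative. Your phrasing via uniqueness of the solution to the second-order difference recursion with data $d_1=d_2=-\tau\gamma$ is just a repackaging of the paper's induction step, so there is no substantive difference.
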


\begin{proof}
We claim that $I_k-I_{k-1}=-\tau\gamma$. To prove this, 
we proceed by induction. Let $k=3$. We take an approximation of $|x|^2$
as a test function in \eqref{bdf3}. Then, arguing as in the previous sections,
we find that
\begin{align*}
  \frac{11}{6} & (I_3-I_2) - \frac76(I_2-I_1) + \frac13(I_1-I_0) \\
  &= \frac{1}{6}(11I_3 - 18I_2 + 9I_1 - 2I_0) 
	= \frac{\tau}{2\pi}M(8\pi-M) = -\tau\gamma.
\end{align*}
Since  $I_2-I_1=I_1-I_0=-\tau\gamma$, it follows that
$$
  \frac{11}{6}(I_3-I_2) = -\frac76\tau\gamma + \frac13\tau\gamma - \tau\gamma
	= -\frac{11}{6}\tau\gamma.
$$
For the induction step, we assume that $I_{k-1}-I_{k-2}=I_{k-2}-I_{k-3}=-\tau\gamma$
for $k>3$. Then, as above,
$$
  \frac{11}{6}(I_k-I_{k-1}) - \frac76(I_{k-1}-I_{k-2}) + \frac13(I_{k-2}-I_{k-3})
	= -\tau\gamma.
$$
which shows that $I_k-I_{k-1}=-\tau\gamma$.
As in the proof of Theorem \ref{thm.bu.ie}, this leads to a contradiction for
large values of $k$. 
\end{proof}

The previous proof can be generalized to all BDF-$m$ methods
$$
  \frac{1}{\tau}\sum_{i=0}^m a_i n_{k-i} = \diver(\na n_k-\na B_\alpha*n_k),
$$
where $a_i\in\R$ satisfy $\sum_{i=0}^m a_i=1$. Note, however, that
only the BDF-$m$ schemes with $m\le 6$ are $A(\alpha)$-stable, while they
are instable for $m>6$.


\subsection{Runge-Kutta schemes}

The Runge-Kutta scheme reads as follows:
\begin{equation}\label{5.rk}
\begin{aligned}
  & \frac{1}{\tau}(n_k-n_{k-1}) = \sum_{i=1}^s b_i K_i, \quad
	K_i = \diver(\na m_i - m_i\na B_\alpha*m_i), \\ 
	& m_i = n_{k-1} + \tau\sum_{j=1}^s a_{ij}K_j, \quad i=1,\ldots,s,
\end{aligned}
\end{equation}
where $s\in\N$ is the number of stages, $b_i\ge 0$ are the weights, and $a_{ij}$
are the Runge-Kutta coefficients. We assume that $\sum_{i=1}^s b_i=1$.
The existence of solutions is only shown for two particular Runge-Kutta schemes;
see below.

First, we claim that the mass is conserved in the following sense.

\begin{lemma}[Conservation of mass]\label{lem.mass}
Let $n_k\in L^1(\R^2)$ be a solution to \eqref{5.rk} such that $m_i\in L^1(\R^2)$ and
$$
  \frac{1}{\tau}\int_{\R^2}(n_k-n_{k-1})\phi dx
	= \sum_{i=1}^s\int_{\R^2}b_i\big(m_i\Delta\phi 
	+ m(\na B_\alpha*m)\cdot\na\phi\big)dx
$$
for all $\phi\in C_0^\infty(\R^2)$. Then
$$
  \int_{\R^2}m_i dx = \int_{\R^2}n_{k}dx = \int_{\R^2}n_{k-1} dx, \quad i=1,\ldots,s.
$$
\end{lemma}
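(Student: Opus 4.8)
The plan is to prove mass conservation for the Runge-Kutta scheme by exploiting the same cut-off/mollification machinery already developed for the implicit Euler scheme (Step 4 of the proof of Theorem~\ref{thm.ie}), applied stagewise. The key structural observation is that each stage operator $K_i = \diver(\na m_i - m_i\na B_\alpha*m_i)$ is a pure divergence, so testing against the constant function $\phi\equiv 1$ formally annihilates it. The difficulty, as in the Euler case, is that $\phi\equiv 1\notin L^1(\R^2)$ and so is not an admissible test function; the whole content of the proof is the justification of the limit passage.

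\medskip\noindent\textbf{Step 1: Show that each stage conserves mass.}
First I would establish the auxiliary claim
$$
  \int_{\R^2}K_i\,dx = 0 \quad\text{for each }i=1,\ldots,s,
$$
interpreted weakly. The plan is to reuse verbatim the argument from Step~4 of Theorem~\ref{thm.ie}. Employing the radially symmetric cut-off $\phi_R$ from \eqref{2.cutoff} and its mollification $\phi^\eps_R=\phi_R*\rho^\eps$ as test functions, inserting the representation \eqref{2.naB} for $\na B_\alpha$, and using the symmetrization trick together with the bounds \eqref{2.phi} on $\na\phi_R$ and $\Delta\phi_R$, one finds
$$
  \bigg|\int_{\R^2}\big(m_i\Delta\phi^\eps_R + m_i(\na B_\alpha*m_i)\cdot\na\phi^\eps_R\big)dx\bigg|
  \le \frac{C_2}{R^2}\|m_i\|_{L^1(\R^2)} + \frac{C_1}{4\pi R^2}\|m_i\|_{L^1(\R^2)}^2.
$$
Here I use the hypothesis $m_i\in L^1(\R^2)$. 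Passing to the limits $\eps\to0$ (dominated convergence) and then $R\to\infty$ kills the right-hand side, giving $\int_{\R^2}K_i\,dx=0$ for each $i$.

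\medskip\noindent\textbf{Step 2: Combine the stages.}
From the weak mass identity for $n_k$ in the statement, testing with the same $\phi^\eps_R$ and letting $\eps\to0$, $R\to\infty$, the same estimates control every term $b_i\int(m_i\Delta\phi+\ldots)$, so the right-hand side vanishes and
$$
  \int_{\R^2}(n_k-n_{k-1})\,dx = 0,
$$
which yields $\int_{\R^2}n_k\,dx=\int_{\R^2}n_{k-1}\,dx$. For the stage masses, I integrate the relation $m_i=n_{k-1}+\tau\sum_{j=1}^s a_{ij}K_j$ from \eqref{5.rk}: since each $\int_{\R^2}K_j\,dx=0$ by Step~1, every stage satisfies $\int_{\R^2}m_i\,dx=\int_{\R^2}n_{k-1}\,dx$. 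Finally the assumption $\sum_{i=1}^s b_i=1$ ensures consistency, closing the chain of equalities.

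\medskip\noindent\textbf{The main obstacle} is that the integrated stage relation $\int m_i\,dx = \int n_{k-1}\,dx + \tau\sum_j a_{ij}\int K_j\,dx$ requires interpreting the integrals of the purely distributional quantities $K_j$ (which are divergences of $L^2$ fields, not a priori integrable). One must be careful that the weak identity $\int_{\R^2}K_j\,dx=0$ is the right rigorous surrogate and that integrating the algebraic stage equation against the approximating $\phi^\eps_R$ is legitimate before taking limits. The cut-off/symmetrization estimates from Step~4 of Theorem~\ref{thm.ie} are exactly what tame this issue, so once those are invoked the remainder is a short linear-algebra bookkeeping using $\sum_i b_i=1$ and the vanishing of the stage integrals.
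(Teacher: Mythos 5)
Your proposal is correct and follows essentially the same route as the paper: prove $\int_{\R^2}K_i\,dx=0$ for each stage via the mollified cut-off $\phi^\eps_R$, the representation \eqref{2.naB}, the symmetrization trick, and the $O(R^{-2})$ bounds \eqref{2.phi}, then integrate the stage relations and the update formula to transfer the mass of $n_{k-1}$ to every $m_i$ and to $n_k$. The paper's proof is exactly this two-step argument, so there is nothing substantive to add.
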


Note, however, that we do not know whether $m_i\ge 0$ in $\R^2$.

\begin{proof}
Using the mollifier $\rho^\eps$ and the cut-of function $\phi_R(x)=\phi(|x|/R)$, where
$\phi$ is defined in \eqref{2.cutoff}, as a test function in the weak formulation 
of the equation for $K_i$ and performing the limit $\eps\to 0$, we find that
$$
  \int_{\R^2} K_i\phi_R dx = \int_{\R^2}m_i\Delta\phi_R dx
	+ \int_{\R^2}m_i(\na B_\alpha*m_i)\cdot\na\phi_R dx.
$$
According to \eqref{2.phi}, the first term on the right-hand side can be
estimated as
$$
  \bigg|\int_{\R^2}m_i\Delta\phi_R dx\bigg| \le \frac{C_2}{R^2}\|m_i\|_{L^1(\R^2)}.
$$
For the second term, we use formulation \eqref{2.naB} of $\na B_\alpha$,
the symmetry argument, and the Lipschitz estimate \eqref{2.phi} for $\na\phi_R$,
which leads to
\begin{align*}
  \bigg|\int_{\R^2} & m_i(\na B_\alpha*m_i)\cdot\na\phi_R dx\bigg| \\
	&= \frac12\bigg|\int_{\R^2}m_i(x)\int_{\R^2}\big(\na\phi_R(x)-\na\phi_R(y)\big)
	\cdot\frac{x-y}{|x-y|}g_\alpha(|x-y|)m_i(y)dy dx\bigg| \\
	&\le \frac{C_1}{2R^2}\int_{\R^2}|m_i(x)|\int_{\R^2}|m_i(y)|dy dx 
	= \frac{C_1}{2R^2}\|m_i\|_{L^1(\R^2)}^2.
\end{align*}
We deduce that for $R\to\infty$, $\int_{\R^2}K_idx=0$. Hence,
\begin{align*}
  \int_{\R^2}m_idx &= \int_{\R^2}n_{k-1}dx 
	+ \tau\sum_{j=1}^s a_{ij}\int_{\R^2}K_j dx = \int_{\R^2}n_{k-1}dx, \\
  \int_{\R^2}n_k dx &= \int_{\R^2}n_{k-1} + \tau\sum_{i=1}^s b_i\int_{\R^2}K_i dx
	= \int_{\R^2}n_{k-1}dx,
\end{align*}
which concludes the proof.
\end{proof}

We are able to show finite-time blow-up for all Runge-Kutta schemes if $\alpha=0$.

\begin{theorem}[Blow-up for Runge-Kutta schemes]\label{thm.bu.rk}
Let $\alpha=0$. Assume that
$$
  n_0\ge 0, \quad I_0 := \int_{\R^2}n_0(x)|x|^2dx < \infty, \quad
	M:=\int_{\R^2}n_0 dx > 8\pi.
$$
Let $(n_k)\subset L^1(\R^2)\cap H^1(\R^2)$ 
be a sequence of nonnegative weak solutions to \eqref{5.rk}. Then this
sequence is finite with maximal index $k_{\rm max}$ defined in \eqref{3.kmax1}.
\end{theorem}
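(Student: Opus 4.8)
The plan is to run the virial argument stage by stage and to exploit that each Runge--Kutta stage contributes the same amount to the evolution of the second moment. Write $I_k=\int_{\R^2}n_k|x|^2\,dx$ and $\gamma=M(M-8\pi)/(2\pi)$, as before. The starting point is the weak formulation of \eqref{5.rk} tested against a mollified cut-off approximation of $|x|^2$, which I would justify exactly as in steps~4 and~5 of the proof of Theorem~\ref{thm.ie}, using $\psi_R^\eps=(|x|^2\phi_R)*\rho^\eps$ and passing to the limits $\eps\to0$ and then $R\to\infty$. This reduces the identity to
$$
  \frac1\tau(I_k-I_{k-1}) = \sum_{i=1}^s b_i\int_{\R^2}K_i|x|^2\,dx,
$$
provided the stages $m_i$ have finite mass and finite second moment.

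The key computation is to show that every stage gives the same value $\int_{\R^2}K_i|x|^2\,dx=-\gamma$. Integrating by parts with $\na(|x|^2)=2x$ and $K_i=\diver(\na m_i-m_i\na B_0*m_i)$ produces the diffusion contribution $-2\int_{\R^2}\na m_i\cdot x\,dx=4\int_{\R^2}m_i\,dx$ and the drift contribution $2\int_{\R^2}m_i(\na B_0*m_i)\cdot x\,dx$. By Lemma~\ref{lem.mass} each stage has mass $\int_{\R^2}m_i\,dx=M$, so the diffusion term equals $4M$. For the drift term I would apply the same symmetrization as in the proof of Theorem~\ref{thm.bu.ie}: since $\na B_0(z)=-z/(2\pi|z|^2)$ and $(x-y)\cdot(x-y)/|x-y|^2=1$,
$$
  2\int_{\R^2}m_i(\na B_0*m_i)\cdot x\,dx
  = -\frac{1}{2\pi}\int_{\R^2}\int_{\R^2}m_i(x)m_i(y)\,dy\,dx
  = -\frac{M^2}{2\pi}.
$$
Note that this identity is a consequence of mass conservation alone and does \emph{not} require the sign of $m_i$; this is essential, because nonnegativity of the intermediate stages is not available (cf.\ the remark after Lemma~\ref{lem.mass}). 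Combining the two contributions gives $\int_{\R^2}K_i|x|^2\,dx=4M-M^2/(2\pi)=-\gamma$ for every $i$.

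Summing over the stages and using $\sum_{i=1}^s b_i=1$ then yields $I_k-I_{k-1}=-\tau\gamma$, which is exactly the implicit Euler identity. Hence $I_k=I_0-k\tau\gamma$, and this becomes negative once $k>I_0/(\tau\gamma)=2\pi I_0/(\tau M(M-8\pi))$, contradicting $n_k\ge0$. This forces the sequence to terminate at the index bounded by \eqref{3.kmax1}.

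The main obstacle is technical rather than conceptual: I must ensure that the integrations by parts and the symmetrization are legitimate, that is, that each stage $m_i$ lies in $L^1(\R^2)$ with finite second moment, so that the tested identity and the double integral converge. Finiteness of the mass of the stages is supplied by Lemma~\ref{lem.mass}, but finiteness of $\int_{\R^2}m_i|x|^2\,dx$ has to be propagated through the coupled stage equations $m_i=n_{k-1}+\tau\sum_{j=1}^s a_{ij}K_j$; I would establish this alongside (or as part of) the existence statement for the particular schemes, mirroring the second-moment control in Theorem~\ref{thm.ie}. Once finiteness is in hand, the cut-off estimates in \eqref{2.phi}, together with the Lipschitz bound on $\na\phi_R$ used in the proof of Lemma~\ref{lem.mass}, control every remainder term by a multiple of $R^{-2}$ and let the limits pass, exactly as in the earlier proofs.
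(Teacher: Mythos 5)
Your proposal is correct and follows essentially the same route as the paper: test with a mollified cut-off approximation of $|x|^2$, use Lemma \ref{lem.mass} to give each stage mass $M$, apply the symmetrization $\frac{x\cdot(x-y)}{|x-y|^2}\mapsto\frac12$ (which indeed needs only $\int_{\R^2}m_i\,dx=M$ and not $m_i\ge0$), and sum over stages with $\sum_i b_i=1$ to recover the implicit Euler identity $I_k-I_{k-1}=-\tau\gamma$. Your closing remark on propagating finiteness of the stages' second moments makes explicit a technical point the paper leaves implicit, but the argument is the same.
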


\begin{proof}
Using an approximation of $|x|^2$ as a test function in \eqref{5.rk}
and passing to the de-regularization limit
(see step 5 of the proof of Theorem \ref{thm.bu.ie}), we find that
$$
  I_k-I_{k-1}
	= \tau\sum_{i=1}^s b_i\bigg(4\int_{\R^2}m_i dx - \frac{1}{\pi}\int_{\R^2}m_i(x)
	\int_{\R^2}\frac{x\cdot(x-y)}{|x-y|^2}m_i(y)dydx\bigg).
$$
By Lemma \ref{lem.mass}, the symmetry argument, and $\sum_{i=1}^s b_i=1$,
\begin{align*}
  I_k-I_{k-1} &= \tau\sum_{i=1}^s b_i\bigg(4M-\frac{1}{2\pi}\int_{\R^2}m_i(x)
	\int_{\R^2}m_i(y)dydx\bigg) \\
	&= \tau\sum_{i=1}^s b_i\bigg(4M - \frac{M^2}{2\pi}\bigg) = \frac{M}{2\pi}(8\pi-M).
\end{align*}
Now, we argue as in the proof of Theorem \ref{thm.bu.ie} to conclude.
\end{proof}

The case $\alpha>0$ is more delicate since $m_i\ge 0$ is generally not guaranteed.
Indeed, it follows that (see \eqref{3.aux} and \eqref{2.estg})
\begin{align*}
  I_k-I_{k-1} &= \tau\sum_{i=1}^s b_i\bigg(\frac{M}{2\pi}(8\pi-M)
	+ \frac{1}{2\pi}\int_{\R^2}\int_{\R^2}\big(1-g_\alpha(|x-y|)\big)m_i(x)m_i(y)dydx
	\bigg) \\
	&\le \tau\sum_{i=1}^s b_i\bigg(-\gamma + \frac{\sqrt{\alpha}}{2\pi}M
	\int_{\R^2}|y||m_i(y)|dy\bigg),
\end{align*}
where we recall that $\gamma=M(M-8\pi)/(2\pi)$. By the Cauchy-Schwarz inequality,
$$
  I_k-I_{k-1} \le \tau\sum_{i=1}^s b_i\bigg\{-\gamma + \frac{\sqrt{\alpha}}{2\pi}M
	\bigg(\int_{\R^2}|m_i(y)|dy\bigg)^{1/2}
	\bigg(\int_{\R^2}|y|^2|m_i(y)| dy\bigg)^{1/2}\bigg\},
$$
and this cannot be estimated further as $m_i\ge 0$ may not hold.
However, for the midpoint and trapezoidal rule, we are able to give a result.

\subsection*{Implicit midpoint rule}

The implicit midpoint rule is defined by $s=1$, $a_{11}=1/2$, and $b_1=1$. Then
\eqref{5.rk} becomes
\begin{align*}
  \frac{1}{\tau}(n_k-n_{k-1}) &= \diver(\na m_1-m_1\na B_\alpha*m_1), \\
	m_1 &= n_{k-1} + \frac{\tau}{2}\diver(\na m_1-m_1\na B_\alpha*m_1),
\end{align*}
and since $m_1=\frac12(n_k+n_{k-1})$, this can be rewritten as
\begin{equation}\label{imp}
  \frac{1}{\tau}(n_k-n_{k-1}) = \diver\bigg(\na\bigg(\frac{n_k+n_{k-1}}{2}\bigg)
	- \frac{n_k+n_{k-1}}{2}\na B_\alpha*\frac{n_k+n_{k-1}}{2}\bigg).
\end{equation}

\begin{lemma}[Existence for the midpoint scheme]\label{lem.imp}
Let $\alpha\ge 0$, $n_{k-1}\in W^{1,1}(\R^2)\cap W^{1,\infty}(\R^2)$, and
$$
  \tau\le \frac{2}{(\pi+\frac12)^2}\frac{1}{(\|n_{k-1}\|_X 
	+ \frac{\pi b}{2}\|n_{k-1}\|_X^2 + \frac{\pi}{2}\|\na n_{k-1}\|_X+1)^4}.
$$
Then there exists a unique weak solution 
$n_k\in L^1(\R^2)\cap L^\infty(\R^2)\cap H^1(\R^2)$ to \eqref{imp} with the
following properties:
\begin{itemize}
\item conservation of mass: $\int_{\R^2}n_k dx = \int_{\R^2}n_{k-1}dx$,
\item control of second moment: if $\int_{\R^2}n_{k-1}|x|^2dx<\infty$ then
$\int_{\R^2}n_k|x|^2dx<\infty$.
\end{itemize}
Moreover, if $\alpha>0$ and $n_{k-1}\in Y:=W^{1,1}(\R^2)\cap W^{1,\infty}(\R^2)
\cap H^3(\R^2)$, then $n_k\in Y$.
\end{lemma}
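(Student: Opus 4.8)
The plan is to follow the fixed-point strategy of Theorem \ref{thm.ie} as closely as possible, since the midpoint scheme \eqref{imp} is structurally a single elliptic equation for $n_k$ with a nonlocal drift. First I would rewrite \eqref{imp} in a form suitable for a contraction argument. Introducing $m=\tfrac12(n_k+n_{k-1})$ as the unknown, equation \eqref{imp} becomes, after multiplying by $\tau$ and rearranging,
\begin{equation*}
  -\Delta m + \frac{2}{\tau}m = \frac{2}{\tau}n_{k-1}
	+ \Delta n_{k-1} - \diver\big(m\,\na B_\alpha*m\big),
\end{equation*}
so that the ``new'' source term playing the role of $n_{k-1}$ in Theorem \ref{thm.ie} is $n_{k-1} + \tfrac{\tau}{2}\Delta n_{k-1}$, with diffusion coefficient $2/\tau$ rather than $1/\tau$. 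This is why the hypothesis strengthens to $n_{k-1}\in W^{1,1}\cap W^{1,\infty}$: we need $\na n_{k-1}$ controlled in $X$ so that the Bessel representation \eqref{2.n} (now with $B_{2/\tau}$) makes sense after integrating the $\Delta n_{k-1}$ term by parts against $\na B_{2/\tau}$. I would define $T[\widetilde m]=m$ via the analogue of \eqref{2.n} and apply the Banach fixed-point theorem on $S=\{m:\|m\|_X\le R\}$ with $R$ chosen so that the stated $\tau$-bound yields both $T(S)\subset S$ and the contraction estimate; the explicit radius $\|n_{k-1}\|_X + \tfrac{\pi b}{2}\|n_{k-1}\|_X^2 + \tfrac{\pi}{2}\|\na n_{k-1}\|_X+1$ in the hypothesis is exactly the bound one gets for $\|T[0]\|_X$ using $\|\na B_{2/\tau}\|_{L^1}=\tfrac{\pi}{\sqrt2}\tau^{1/2}$, \eqref{2.nac}, and the extra $\na n_{k-1}$ contribution.

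Once $m$ is obtained, $n_k=2m-n_{k-1}$ recovers the solution, and the $H^1$-regularity follows exactly as in Step 2 of Theorem \ref{thm.ie}: since $m\in X\subset L^2$, we have $m\,\na c[m]\in L^2(\R^2)^2$, so Lemma \ref{lem.ell} upgrades $m$ (hence $n_k$) to $H^1(\R^2)$, with the Young-inequality energy estimate absorbing the drift into the diffusion under the same smallness of $\tau$. Mass conservation and control of the second moment are then verified by the cut-off arguments of Steps 4 and 5, using $\phi_R^\eps$ and $\psi_R^\eps=|x|^2\phi_R$ as test functions; the crucial symmetry cancellation in the nonlocal term goes through verbatim because the drift in \eqref{imp} involves $m$ only at a single time level, so the antisymmetrization of $\tfrac{x-y}{|x-y|^2}g_\alpha$ against $\na\phi_R(x)-\na\phi_R(y)$ applies unchanged.

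The main obstacle I anticipate is the final claim, the persistence of regularity $n_k\in Y=W^{1,1}\cap W^{1,\infty}\cap H^3$ when $\alpha>0$. Here the $H^1$ bound is not enough, and I would bootstrap through the elliptic equation. Writing $c=B_\alpha*m$, Lemma \ref{lem.bessel} gives $c\in H^{s+2}$ whenever $m\in H^s$, so the strategy is to prove $m\in H^3$ by iterating the strong formulation $(-\Delta+2/\tau)m = \tfrac{2}{\tau}n_{k-1}+\Delta n_{k-1}-\diver(m\na c)$ and estimating $\diver(m\na c)=\na m\cdot\na c + m(\alpha c - m)$ in successively higher Sobolev norms. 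The delicate point is that the product terms $\na m\cdot\na c$ and $m^2$ must be controlled in $H^1$ (to reach $m\in H^3$), which requires the algebra/Moser-type product estimates in $H^1(\R^2)$ together with the $W^{1,\infty}$ and $H^1$ bounds already established; the $W^{1,1}\cap W^{1,\infty}$ part of $Y$ is then recovered from the $H^3\hookrightarrow W^{1,\infty}$ embedding in two dimensions and the $L^1$-preservation of the Bessel convolution. I expect closing this bootstrap—ensuring every nonlinear product lands in the right space without a further loss in $\tau$—to be the technically heaviest part, whereas the existence, uniqueness, mass, and second-moment statements are essentially transcriptions of Theorem \ref{thm.ie}.
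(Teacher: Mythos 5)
Your overall strategy (Banach fixed point via the Bessel representation on a ball of radius $C_B$, $H^1$-regularity through Lemma \ref{lem.ell}, the cut-off arguments for mass and second moment, and an elliptic bootstrap for the $Y$-regularity) is exactly the paper's proof. However, the reformulation on which you build the fixed point is algebraically wrong, and this is a genuine gap: if $m=\frac12(n_k+n_{k-1})$ is taken as the unknown, then $n_k-n_{k-1}=2(m-n_{k-1})$, so \eqref{imp} becomes
\begin{equation*}
  -\Delta m + \frac{2}{\tau}\,m = \frac{2}{\tau}\,n_{k-1} - \diver\big(m\,\na B_\alpha*m\big),
\end{equation*}
with \emph{no} $\Delta n_{k-1}$ term; that term cancels identically in the $m$-variable. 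The equation you display, with the extra $+\Delta n_{k-1}$, is equivalent to $\frac{1}{\tau}(n_k-n_{k-1})=\frac12\Delta n_k+\frac32\Delta n_{k-1}-\diver(m\na B_\alpha*m)$, which is not the midpoint scheme; a fixed point of the map you define would therefore solve the wrong problem. The $\Delta n_{k-1}$ term belongs to the other formulation, the one the paper actually uses, in which the unknown is kept as $n_k$ itself: there one gets $(-\Delta+\frac{2}{\tau})n_k=\frac{2}{\tau}n_{k-1}+\Delta n_{k-1}-\frac12\diver\big((\widetilde n+n_{k-1})\na B_\alpha*(\widetilde n+n_{k-1})\big)$, i.e.\ the drift carries a factor $2$ when expressed through $m$, not the factor $1$ you wrote. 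So your displayed equation is a hybrid of the two formulations and matches neither.

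The slip propagates into your justification of the hypotheses: in the (correct) $m$-formulation the radius and the $\tau$-condition would not involve $\|\na n_{k-1}\|_X$ at all (that hypothesis would only be needed to recover $n_k=2m-n_{k-1}\in H^1(\R^2)$ at the end), whereas in the paper's $n_k$-formulation the term $\frac{\pi}{2}\|\na n_{k-1}\|_X$ in $C_B$ comes precisely from estimating $\|\na B_{2/\tau}*\na n_{k-1}\|_X\le\|\na B_{2/\tau}\|_{L^1(\R^2)}\|\na n_{k-1}\|_X$; note also that \eqref{B2} gives $\|\na B_{2/\tau}\|_{L^1(\R^2)}=\frac{\pi}{2}\sqrt{\tau/2}$, not $\frac{\pi}{\sqrt{2}}\tau^{1/2}$ as you state. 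Once you commit to one formulation and correct the equation accordingly, the rest of your plan --- contraction on $S$, regularity via Lemma \ref{lem.ell}, the Steps 4--5 arguments for mass and second moment (the single-time-level symmetry observation is indeed the key point), and the $H^2\to H^3$ bootstrap using $\diver(m\na c)=\na m\cdot\na c+m(\alpha c-m)$ together with $H^3(\R^2)\hookrightarrow W^{1,\infty}(\R^2)$ and the representation formula for the $W^{1,1}$ bound --- coincides with the paper's argument and goes through.
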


Note that our technique of proof requires higher regularity for $n_{k-1}$
compared to the implicit Euler scheme. For general Runge-Kutta schemes, the
regularity requirement becomes even stronger, which is the reason why we
show existence results only in special cases.

\begin{proof}
We set $n:=n_k$ and $n_0:=n_{k-1}$. For given $\widetilde n\in X$, we solve the
linear problem 
$$
  \bigg(-\Delta+\frac{2}{\tau}\bigg)n = \frac{2}{\tau}n_0
	+ \diver\bigg(\na n_0 - \frac12(\widetilde n+n_0)\na B_\alpha*(\widetilde n+n_0)\bigg)
	\quad\mbox{in }\R^2.
$$
By Lemma \ref{lem.ell}, this problem has a unique solution $n\in H^1(\R^2)$, and
it can be represented by
\begin{equation}\label{3.Tn}
  T[\widetilde n]:= n = \frac{2}{\tau}B_{2/\tau}*n_0 + \na B_{2/\tau}*\na n_0
	- \frac12\na B_{2/\tau}*\big((\widetilde n+n_0)\na c[\widetilde n+n_0]\big),
\end{equation}
writing $\na c[n]=\na B_\alpha*n$ as in section \ref{sec.ie}.
This defines the fixed-point operator $T:S\to S$, 
where $S=\{n\in X:\|n\|_X\le C_B\}$ and
$$
  C_B = \|n_{0}\|_X + \frac{\pi b}{2}\|n_{0}\|_X^2
	+ \frac{\pi}{2}\|\na n_{0}\|_X + 1.
$$
It holds $T(S)\subset S$ since, using similar arguments as in the proof of Theorem
\ref{thm.ie} and the smallness condition on $\tau$,
\begin{align*}
  \|T[n]\|_X &\le \frac{2}{\tau}\|B_{2/\tau}\|_{L^1(\R^2)}\|n_0\|_X
	+ \|\na B_{2/\tau}\|_{L^1(\R^2)}\|\na n_0\|_X \\
	&\phantom{xx}{}
	+ \frac12\|\na B_{2/\tau}\|_{L^1(\R^2)}\|n+n_0\|_X\|\na c[n+n_0]\|_X \\
	&\le \|n_0\|_X + \frac{\pi\sqrt{\tau}}{2\sqrt{2}}\|\na n_0\|_X
	+ \frac{\pi b\sqrt{\tau}}{4\sqrt{2}}\|n+n_0\|_X^2 \\
  &\le C_B.
\end{align*}
We claim that $T:S\to S$ is a contraction. Indeed, let $n$, $m\in S$. Then
\begin{align*}
  \|T[n]-T[m]\|_X &\le \frac12\Big\|\na B_{2/\tau}*(n+n_0)\na B_\alpha*(n+n_0) \\
	&\phantom{xx}{}- \na B_{2/\tau}*(m+n_0)\na B_\alpha*(m+n_0)\Big\|_X \\
	&\le \frac{\pi\sqrt{\tau}}{4\sqrt{2}}\big\|(n+n_0)\na B_\alpha*(n+n_0)
	- (m+n_0)\na B_\alpha*(m+n_0)\big\|_X \\
	&\le \frac{\pi\sqrt{\tau}}{4\sqrt{2}}\Big(\|n\na B_\alpha*n-m\na B_\alpha*m\|_X \\
	&\phantom{xx}{}+ \|n_0\na B_\alpha*(n-m)\|_X
	+ \|(n-m)\na B_\alpha*n_0\|_X\Big) \\
	&\le \frac{\pi b\sqrt{\tau}}{\sqrt{2}}(C_B+1)\|n-m\|_X,
\end{align*}
and we have $\pi b\sqrt{\tau}(C_B+1)/2<1$. The Banach fixed-point theorem now
implies that there exists a unique fixed point $n\in X$.

By the same arguments used in step 2 of the proof of Theorem \ref{thm.ie}, we
infer that $n_k\in H^1(\R^2)$. Steps 4 and 5 show the conservation of mass and
the finiteness of the second moment.

It remains to show that if $n_{k-1}\in Y$ then $n_k$ has the same regularity.
By Lemma \ref{lem.bessel}, $n_k\in H^1(\R^2)$ implies that $\na B_\alpha*n_k\in
H^2(\R^2)$. Therefore,
$$
  \bigg(-\Delta+\frac{2}{\tau}\bigg)n_k = \frac{2}{\tau}n_{k-1} + \Delta n_{k-1}
	- \diver\big((n_k+n_{k-1})\na c[n_k]\big) \in L^2(\R^2).
$$
Elliptic regularity then gives $n_k\in H^2(\R^2)$. We bootstrap this argument to
find that $n_k\in H^3(\R^2)\hookrightarrow W^{1,\infty}(\R^2)$. Taking the
$L^1$ norm of the gradient of $n=n_k$ in \eqref{3.Tn} shows that 
$\|\na n_k\|_{L^1(\R^2)}$ can be estimated in terms of the $H^3$ norms of
$n_k$, $n_{k-1}$, and $c[n_k]$. We conclude that $n_k\in W^{1,1}(\R^2)$,
finishing the proof.
\end{proof}

\begin{lemma}[Blow-up for the midpoint scheme]\label{thm.bu.mip}
Let $\alpha>0$. Assume that
$$
  n_0\ge 0, \quad I_0 := \int_{\R^2}n_0(x)|x|^2dx < \infty, \quad
	M:=\int_{\R^2}n_0 dx > 8\pi.
$$
Let $(n_k)\subset L^1(\R^2)\cap H^1(\R^2)$ 
be a sequence of nonnegative weak solutions to \eqref{imp}. 
Suppose that $I_0\le I^*$ and $\tau\le 2\tau^*$ (see \eqref{Istar}). Then this
sequence is finite with maximal index $k_{\rm max}$ defined in \eqref{3.kmax2}.
\end{lemma}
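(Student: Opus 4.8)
The plan is to reproduce the virial strategy of Theorem~\ref{thm.bu.ie}, the only genuinely new point being that the midpoint scheme evaluates the drift at the average $m_1=\frac12(n_k+n_{k-1})$, so the second moment $I_k=\int_{\R^2}n_k|x|^2dx$ will satisfy a recursion whose right-hand side is controlled by the \emph{averaged} moment $A_k:=\frac12(I_k+I_{k-1})=\int_{\R^2}m_1|x|^2dx$ rather than by $I_k$ itself. I keep the abbreviations $\beta=\sqrt{\alpha}M^{3/2}/\pi$, $\gamma=M(M-8\pi)/(2\pi)$ and the function $f(s)=\beta\sqrt{s}-\gamma$ from the proof of Theorem~\ref{thm.bu.ie}, recalling that $f(I^*)=0$ and $\tau^*=\gamma/\beta^2=\sqrt{I^*}/\beta$.

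First I would derive the recursion. Testing \eqref{imp} with the mollified, truncated version of $|x|^2$ exactly as in step~5 of the proof of Theorem~\ref{thm.bu.ie} and passing to the limits $\eps\to0$, $R\to\infty$ yields
\[
  \frac{1}{\tau}(I_k-I_{k-1}) = 4\int_{\R^2}m_1\,dx + 2\int_{\R^2}m_1(\na B_\alpha*m_1)\cdot x\,dx .
\]
Using $\int_{\R^2}m_1\,dx=M$ (conservation of mass from Lemma~\ref{lem.imp}) and the symmetrization performed in \eqref{3.aux}, this becomes
\[
  I_k-I_{k-1} = \frac{\tau M}{2\pi}(8\pi-M)
  + \frac{\tau}{2\pi}\int_{\R^2}\int_{\R^2}\big(1-g_\alpha(|x-y|)\big)m_1(x)m_1(y)\,dy\,dx .
\]
Since $m_1\ge0$ (because $n_k,n_{k-1}\ge0$), the bound $1-g_\alpha(|z|)\le\sqrt{\alpha}|z|$ together with the Cauchy--Schwarz estimate of \eqref{2.estg}, applied now to $m_1$ and using $\int_{\R^2}m_1|x|^2dx=A_k$, controls the double integral by $\tau\beta A_k^{1/2}$, so that
\[
  I_k-I_{k-1} \le \tau f(A_k), \qquad A_k=\tfrac12(I_k+I_{k-1}).
\]

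The key step, and the main obstacle, is to run the induction with $A_k$ in place of $I_k$. I would prove by induction that $I_k\le I^*$ (equivalently $f(I_k)\le0$) for all admissible $k$, the base case being the hypothesis $I_0\le I^*$. For the step, assume $I_{k-1}\le I^*$ and suppose, for contradiction, that $I_k>I^*$. Then $f(A_k)\le0$ would force $I_k\le I_{k-1}\le I^*$, so necessarily $f(A_k)>0$, i.e.\ $\sqrt{A_k}>\gamma/\beta\ge\sqrt{I_{k-1}}$. Now comes the point that produces the factor $2$: write $I_k-I_{k-1}=2(A_k-I_{k-1})=2(\sqrt{A_k}-\sqrt{I_{k-1}})(\sqrt{A_k}+\sqrt{I_{k-1}})$ and insert this into $I_k-I_{k-1}\le\tau\beta(\sqrt{A_k}-\gamma/\beta)$ to obtain
\[
  \frac{2(\sqrt{A_k}-\sqrt{I_{k-1}})}{\sqrt{A_k}-\gamma/\beta}\,
  \frac{\sqrt{A_k}+\sqrt{I_{k-1}}}{\beta}\le\tau .
\]
Because $\sqrt{I_{k-1}}\le\gamma/\beta$ the first factor is $\ge2$, whence $2\sqrt{A_k}/\beta\le\tau$ and thus $\tau>2\gamma/\beta^2=2\tau^*$, contradicting the hypothesis $\tau\le2\tau^*$. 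Hence $I_k\le I^*$, and then $f(A_k)\le0$ gives $I_k\le I_{k-1}$, so $(I_k)$ is nonincreasing.

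Finally I would close exactly as in Theorem~\ref{thm.bu.ie}. Monotonicity gives $A_k\le I_{k-1}\le I_0$, hence $f(A_k)\le f(I_0)$ since $f$ is nondecreasing; summing $I_k-I_{k-1}\le\tau f(A_k)$ from $1$ to $j$ yields $I_j\le I_0+\tau j f(I_0)$. As $I_0\le I^*$ (with strict inequality in the nontrivial case) we have $f(I_0)=-\frac{M}{2\pi}(M-8\pi-2\sqrt{\alpha M I_0})\le0$, so $I_j$ becomes negative once $j>-I_0/(\tau f(I_0))$, which equals the right-hand side of \eqref{3.kmax2}; this contradicts $n_j\ge0$ and forces the sequence to terminate at $k_{\rm max}$ bounded by \eqref{3.kmax2}. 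The only delicate point throughout is the appearance of the averaged moment $A_k$, resolved by the identity $I_k-I_{k-1}=2(A_k-I_{k-1})$, which both preserves the factorization used for the implicit Euler scheme and accounts for the relaxed step-size condition $\tau\le2\tau^*$.
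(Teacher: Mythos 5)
Your proof is correct and reaches the same conclusions ($\tau\le 2\tau^*$ and the bound \eqref{3.kmax2}) by the same overall virial strategy, but the route through the quadratic term genuinely differs from the paper's. The paper expands $m_1=\frac12(n_k+n_{k-1})$ and applies the Cauchy--Schwarz estimate \eqref{2.estg} to $n_k$ and $n_{k-1}$ separately, obtaining the recursion $I_k-I_{k-1}\le\frac{\tau}{2}\bigl(f(I_k)+f(I_{k-1})\bigr)$; under the inductive hypothesis $f(I_{k-1})\le 0$ this collapses to $I_k-I_{k-1}\le\frac{\tau}{2}f(I_k)$, i.e.\ the implicit Euler recursion \eqref{3.aux2} with step size $\tau/2$, so the contradiction-and-summation argument of Theorem \ref{thm.bu.ie} is invoked verbatim (this is exactly where $\tau\le 2\tau^*$ enters). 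You instead keep the stage value $m_1$ intact, bound the error by $\tau f(A_k)$ with the averaged moment $A_k=\frac12(I_k+I_{k-1})$, and close the induction directly through the identity $I_k-I_{k-1}=2(A_k-I_{k-1})$ and the resulting factorization. By concavity of $s\mapsto\sqrt{s}$ one has $f(A_k)\ge\frac12\bigl(f(I_k)+f(I_{k-1})\bigr)$, so your recursion is formally weaker than the paper's; it nevertheless suffices and yields the same constants after summation. What each approach buys: the paper's decomposition lets it reuse the implicit Euler argument without modification and gives a marginally sharper recursion, while your version treats the midpoint stage as the basic object (which is natural, since it is $m_1\ge 0$ that makes the estimate work) and makes the origin of the relaxed restriction $\tau\le 2\tau^*$ --- the factor $2$ in $I_k-I_{k-1}=2(A_k-I_{k-1})$ --- completely explicit, rather than hidden in a halved step size.
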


\begin{proof}
Approximating $|x|^2$ as in step 5 of the proof of Theorem \ref{thm.bu.ie}
and using the nonnegativity of $n_k$ and $n_{k-1}$, we can estimate as
\begin{align*}
  I_k - I_{k-1} &= \int_{\R^2}(n_k-n_{k-1})|x|^2dx \\
	&= 4\tau M -\frac{\tau}{2}\int_{\R^2}(n_k+n_{k-1})\na B_\alpha*(n_k+n_{k-1})dx \\
	&= -\tau\gamma + \frac{\tau}{4\pi}\int_{\R^2}\int_{\R^2}\big(1-g_\alpha(|x-y|)\big)
	(n_k+n_{k-1})(x)(n_k+n_{k-1})(y)dydx \\
	&\le -\tau\gamma + \frac{\tau}{4\pi}\sqrt{\alpha}\int_{\R^2}|x-y|
	(n_k+n_{k-1})(x)(n_k+n_{k-1})(y)dydx \\
	&\le -\tau\gamma + \frac{\tau}{4\pi}\sqrt{\alpha}(2M)M^{1/2}
	\big(I_k^{1/2}+I_{k-1}^{1/2}\big) 
	= -\tau\gamma + \frac{\beta}{2}\big(I_k^{1/2}+I_{k-1}^{1/2}\big).
\end{align*}
Setting again $f(s) = \beta\sqrt{s}-\gamma$, it follows that
$$
  I_k-I_{k-1} \le \frac{\tau}{2}\big(f(I_k)+f(I_{k-1})\big).
$$
Again, since $f(I^*)=0$ and $f$ is increasing, we have $f(I_0)\le 0$.
Let $f(I_{k-1})\le 0$. Then
$$
  I_k-I_{k-1}\le \frac{\tau}{2}f(I_k),
$$
and we can proceed as in the proof of Theorem \ref{thm.bu.ie}. 
\end{proof}

\subsection*{Trapezoidal rule}

The (implicit, two-stage) trapezoidal rule is defined by $s=2$,
$a_{11}=a_{12}=\frac12$, $b_1=b_2=\frac12$, which gives the scheme
\begin{equation}\label{trapez}
  \frac{1}{\tau}(n_k-n_{k-1}) = \frac12\diver\big(\na(n_k+n_{k-1})
	+ n_k\na B_\alpha*n_k + n_{k-1}\na B_\alpha*n_{k-1}\big).
\end{equation}
The existence of weak solutions can be shown exactly as in the proof of
Lemma \ref{lem.imp}, therefore we leave the details to the reader.

\begin{proposition}[Finite-time blow-up for the trapezoidal rule]\label{thm.bu.trp}
Let $\alpha>0$. Assume that 
$$
  n_0\ge 0, \quad I_0 := \int_{\R^2}n_0(x)|x|^2dx < \infty, \quad
	M:=\int_{\R^2}n_0 dx > 8\pi.
$$
Let $(n_k)$ be a sequence of nonnegative weak solutions to \eqref{trapez}. 
Suppose that $I_0\le I^*$ and $\tau\le \tau^*$ (see \eqref{Istar}). Then this
sequence is finite with maximal index $k_{\rm max}$ defined in \eqref{3.kmax2}.
\end{proposition}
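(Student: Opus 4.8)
The plan is to reproduce, almost verbatim, the virial argument used for the implicit midpoint rule in Lemma~\ref{thm.bu.mip}; the only structural change is that the trapezoidal scheme carries the quadratic nonlocal term on the two genuine iterates $n_k$ and $n_{k-1}$ rather than on a single averaged stage. Throughout I would keep the abbreviations $I_k=\int_{\R^2}n_k|x|^2dx$, $\gamma=M(M-8\pi)/(2\pi)$, $\beta=\sqrt{\alpha}M^{3/2}/\pi$, and $f(s)=\beta\sqrt{s}-\gamma$, and recall from \eqref{Istar} that $f(I^*)=0$ and $\tau^*=\gamma/\beta^2$.

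First I would test the weak form of \eqref{trapez} against the mollified cut-off approximation of $|x|^2$ introduced in step~5 of the proof of Theorem~\ref{thm.bu.ie}, and pass to the limits $\eps\to0$ and then $R\to\infty$. The two drift contributions are treated by the symmetrization of \eqref{3.aux} applied separately to $n_k$ and to $n_{k-1}$; together with mass conservation (established as in Lemma~\ref{lem.imp}) this yields the exact identity
\begin{equation*}
  I_k-I_{k-1} = -\tau\gamma + \frac{\tau}{4\pi}\int_{\R^2}\int_{\R^2}
	\big(1-g_\alpha(|x-y|)\big)\big(n_k(x)n_k(y)+n_{k-1}(x)n_{k-1}(y)\big)\,dy\,dx.
\end{equation*}
Since $n_k,n_{k-1}\ge0$, each diagonal form can be bounded exactly as in \eqref{2.estg}, namely $\frac{1}{2\pi}\int_{\R^2}\int_{\R^2}(1-g_\alpha)n_jn_j\,dy\,dx\le\beta I_j^{1/2}$, which collapses the identity to the recursive inequality
\begin{equation*}
  I_k-I_{k-1} \le \tfrac{\tau}{2}\big(f(I_k)+f(I_{k-1})\big).
\end{equation*}
This is precisely the inequality satisfied by the midpoint rule, so from here the iteration of Lemma~\ref{thm.bu.mip} applies without change: $I_0\le I^*$ gives $f(I_0)\le0$, and an induction on $k$ using $f(I_{k-1})\le0$ reduces the inequality to $I_k-I_{k-1}\le\frac{\tau}{2}f(I_k)$; factoring the left-hand side as $(I_k^{1/2}-I_{k-1}^{1/2})(I_k^{1/2}+I_{k-1}^{1/2})$ and invoking $\tau\le\tau^*$ then forces $f(I_k)\le0$ and $I_k\le I_{k-1}$. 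Summing the resulting bound $I_k-I_{k-1}\le\tau f(I_0)$ gives $I_k\le I_0+\tau k\,f(I_0)$, so $I_k<0$ once $k$ exceeds the value in \eqref{3.kmax2}, contradicting $n_k\ge0$ and yielding the bound \eqref{3.kmax2} on $k_{\rm max}$.

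I expect the decisive point to be the same one at which the general Runge--Kutta argument of Theorem~\ref{thm.bu.rk} broke down for $\alpha>0$: the Cauchy--Schwarz bound $\int_{\R^2}|y|n_j\,dy\le M^{1/2}I_j^{1/2}$ hidden in \eqref{2.estg} is licit only because $n_j\ge0$ lets one read $\int n_j=M$ and $\int|y|^2n_j=I_j$ as a bona fide mass and second moment. For the trapezoidal rule this is exactly what is available, because its nonlocal forms sit on the honest nonnegative iterates $n_k,n_{k-1}$ and never on a sign-indefinite intermediate stage $m_i$; this is what rescues the scheme where the general case fails. The remaining algebra—checking that $(I_k^{1/2}-I_{k-1}^{1/2})/(I_k^{1/2}-\gamma/\beta)\ge1$ under the induction hypothesis $I_{k-1}^{1/2}\le\gamma/\beta$, so that the factor $\tfrac{\tau}{2}$ produces the required contradiction with the smallness of $\tau$—is identical to the midpoint computation and requires no new idea.
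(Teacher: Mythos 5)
Your proposal is correct and follows essentially the same route as the paper: the virial identity with the symmetrization applied separately to the two nonlocal terms on $n_k$ and $n_{k-1}$, the bound $1-g_\alpha(|z|)\le\sqrt{\alpha}\,|z|$ with Cauchy--Schwarz to obtain $I_k-I_{k-1}\le\frac{\tau}{2}\bigl(f(I_k)+f(I_{k-1})\bigr)$, and then the midpoint-rule induction to force $f(I_k)\le 0$, monotonicity of $I_k$, and the linear decay $I_k\le I_0+\tau k f(I_0)$ contradicting nonnegativity. Your closing observation---that the argument survives for $\alpha>0$ precisely because the quadratic forms sit on the nonnegative iterates rather than on sign-indefinite Runge--Kutta stages---is exactly the point the paper makes when explaining why the general Runge--Kutta case fails.
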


\begin{proof}
Arguing as in the previous blow-up proofs, we obtain
\begin{align*}
  I_k-I_{k-1} &= 4\tau M -\frac{\tau}{2}\int_{\R^2}n_k\na B_\alpha*n_k dx
	- \frac{\tau}{2}\int_{\R^2}n_{k-1}\na B_\alpha*n_{k-1} dx \\
	&\le -\tau\gamma + \frac{\tau}{4\pi}\sqrt{\alpha}\int_{\R^2}|x-y|n_k(y)n_k(x)dydx \\
	&\phantom{xx}{}
	+ \frac{\tau}{4\pi}\sqrt{\alpha}\int_{\R^2}\int_{\R^2}|x-y|n_{k-1}(y)n_{k-1}(x)dydx
	\\
	&\le -\tau\gamma + \frac{\tau}{4\pi}\sqrt{\alpha}M^{3/2}
	\big(I_k^{1/2}+I_{k-1}^{1/2}\big).
\end{align*}
Now, we proceed as in the proof of Proposition \ref{thm.bu.mip}.
\end{proof}


\section{Numerical examples}\label{sec.num}

The numerical experiments are performed by using the finite-element method
introduced by Saito in \cite{Sai07} and analyzed in \cite{Sai12}.
In contrast to \cite{Sai07}, we choose higher-order temporal approximations.
The scheme uses a first-order upwind technique for the drift term, the
lumped mass method, and a decoupling procedure. We take $\alpha=1$ in \eqref{1.eq}
and consider bounded domains only. Equations \eqref{1.eq} are supplemented with
no-flux boundary conditions. In the first example, the domain is large enough
to avoid effects arising from boundary conditions. The second example, on the
other hand, illustrates blow-up at the boundary.

\subsection{Numerical scheme}

Let ${\mathcal T}_h$ be a triangulation of the bounded set $\Omega\subset\R^2$,
where $h=\max\{\operatorname{diam}(K):K\in{\mathcal T}_h\}$, and let $D_i$ be
the barycentric domain associated with the vertex $P_i$; see
\cite[Section 2]{Sai07} for the definition. Let $\chi_i$ be the
characteristic function on $D_i$ and let $Y_h$ be the span of all $\chi_i$.
Furthermore, let $X_h$ be the space of linear finite elements.
The lumping operator $M_h:X_h\to Y_h$ is defined by
$M_h v_h=\sum_i v_h(P_i)\chi_i$, where $v_h\in X_h$, and the 
mass-lumped inner product is given by 
$$
  (v_v,w_h)_h = (M_hv_h,M_hw_h)_2, \quad v_h,w_h\in X_h,
$$
where $(\cdot,\cdot)_2$ is the $L^2$ inner product. 

For the discretization of the drift term, we define the discrete Green operator
$G_h:X_h\to X_h$ as the unique solution $v_h=G_hf_h\in X_h$ to
$$
  (\na v_h,\na w_h)_2 + (v_h,w_h)_2 = (f_h,w_h)_2\quad\mbox{for }w_h\in X_h.
$$
(Recall that $\alpha=1$.)
The drift term is approximated by the trilinear form $b_h:X_h^3\to\R$,
$$
  b_h(u_h,v_h,w_h) = \sum_i w_h(P_i)\sum_{P_j\in\Lambda_i}
	\big(v_h(P_i)\beta_{ij}^+(u_h) - v_h(P_j)\beta_{ij}^-(u_h)\big),
$$
where $\Lambda_i$ is the set of vertices $P_j$ that share an edge with $P_i$.
For the definition of $\beta_{ij}^\pm$, we first introduce the set
$S_h^{ij}$ of all elements $K\in{\mathcal T}_h$ such that $P_i$, $P_j\in K$
and the exterior normal vector $\nu_{ij}$ to $\pa D_i\cap\pa D_j$ with
respect to $D_i$. Then
$$
  \beta_{ij}^\pm(u_h) = \sum_{K\in S_h^{ij}}
	\mbox{meas}\big((\pa D_i\cap\pa D_j)|_K\big)
	\big[(\na G_hu_h)|_K\cdot\nu_{ij}|_K\big]_\pm,
$$
where $[s]_\pm=\max\{0,\pm s\}$ for $s\in\R$.
It is explained in \cite{Sai07} that the trilinear form approximates the
integral $\int_\Omega v\na(Gu)\cdot\na w dx$, where $Gu$ is the Green
operator associated with $-\Delta+1$ on $L^2$. 

Equation \eqref{1.eqc} is solved in a semi-implicit way. This means,
for the BDF-2 scheme and for given $u_h$, that we solve the linear problem
\begin{equation}\label{4.eq}
  \frac{1}{\tau}\bigg(\frac32 n_h^k - 2n_h^{k-1} + \frac12 n_h^{k-2},w_h\bigg)_h
	+ (\na n_h^k,\na w_h)_2 + b_h(u_h,n_h,w_h) = 0
\end{equation}
for all $w_h\in X_h$. Here, $n_h^k$ is an approximation of $n(\cdot,\tau k)$.
This defines the solution operator $N(u_h)=n_h$ and
the scheme is completed by chosing $u_h$. Saito has taken $u_h=n_h^{k-1}$, 
giving the usual semi-implicit scheme of first order. 
For higher-order schemes, we need to iterate.
For this, we introduce the iteration $u_h^{(0)}:=n_h^{k-1}$ and
$u_h^{(m)}:=N(u_h^{(m-1)})$ for $m\ge 1$. The iteration stops when
$\|u_h^{(m)}-u_h^{(m-1)}\|_{L^\infty(\R^2)}<\eps$ for some tolerance $\eps>0$
or if $m\ge m_{\rm max}$ for a maximal number $m_{\rm max}$ of iterations.

In a similar way, we define the scheme with the midpoint discretization:
$$
  \frac{1}{\tau}(n_{h}^k - n_h^{k-1},w_h)_h 
	+ \frac{1}{2} (\na(n_{h}^k+n_{h}^{k-1}),\na w_h)_2
	+ \frac{1}{4} b_h(u_h+n_{k-1},n_{h}^k+n_{h}^{k-1},w_h) = 0 
$$
for all $w_h \in X_h$.
The resulting linear systems are computed by using MATLAB.
We choose the domain $\Omega=(0,1)^2$, which is triangulated uniformly by $2a^2$
triangles with maximal size $h=\sqrt{2}/a$.
The numerical parameters are $a=64$, $\tau=5\cdot 10^{-5}$,
$\eps=10^{-4}$, and $m_{\rm max}=10$ if not stated otherwise.

\subsection{Numerical results}

To illustrate the behavior of the solutions, we choose the initial data
as a linear combination of the shifted Gaussians
$$
  W_{x_0,y_0}(x,y) = \frac{M}{2\pi\theta}
	\exp\bigg(-\frac{(x-x_0)^2+(y-y_0)^2}{2\theta}\bigg),
$$
where $(x_0,y_0)\in(0,1)^2$, $M>0$, and $\theta>0$. 
Clearly, the mass of $W_{x_0,y_0}$ equals $M$.
For the first example, we choose $\theta=1/500$, $M=6\pi$, and
$$
  n_0 = W_{0.33,0.33} + W_{0.33,0.66}	+ W_{0.66,0.33} + W_{0.66,0.66}.
$$
The initial mass is $24\pi>8\pi$ and thus, we expect the solutions to blow up
in finite time. 

Figure \ref{fig.test1} shows the cell density $n(x,t)$ at various time
instances computed from the BDF-2 scheme. 
As expected, the solution blows up in finite time in the center of
the domain. Note that the numerical
solution is always nonnegative and conserve the total mass.

\begin{figure}[ht]
\includegraphics[width=75mm]{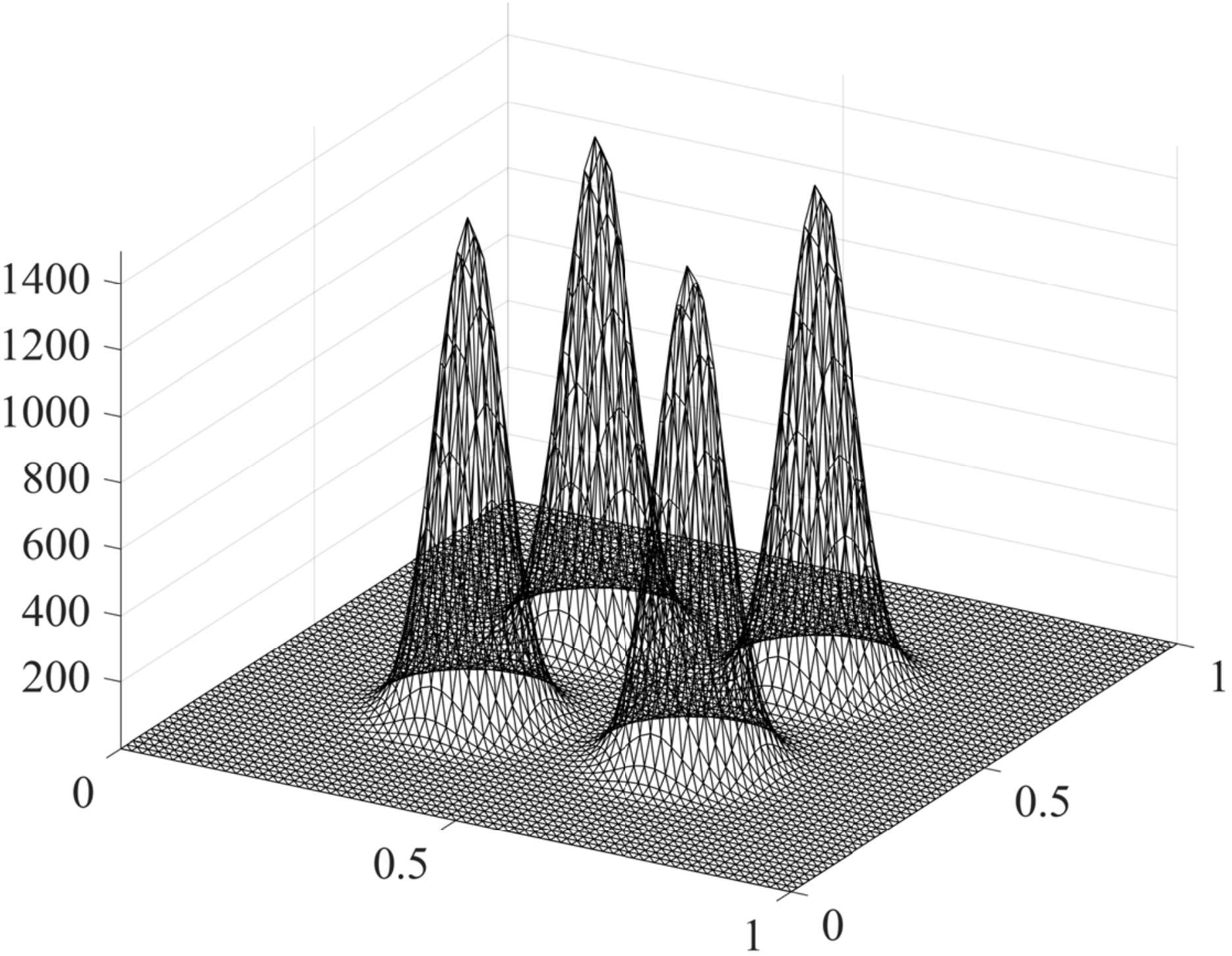}
\includegraphics[width=75mm]{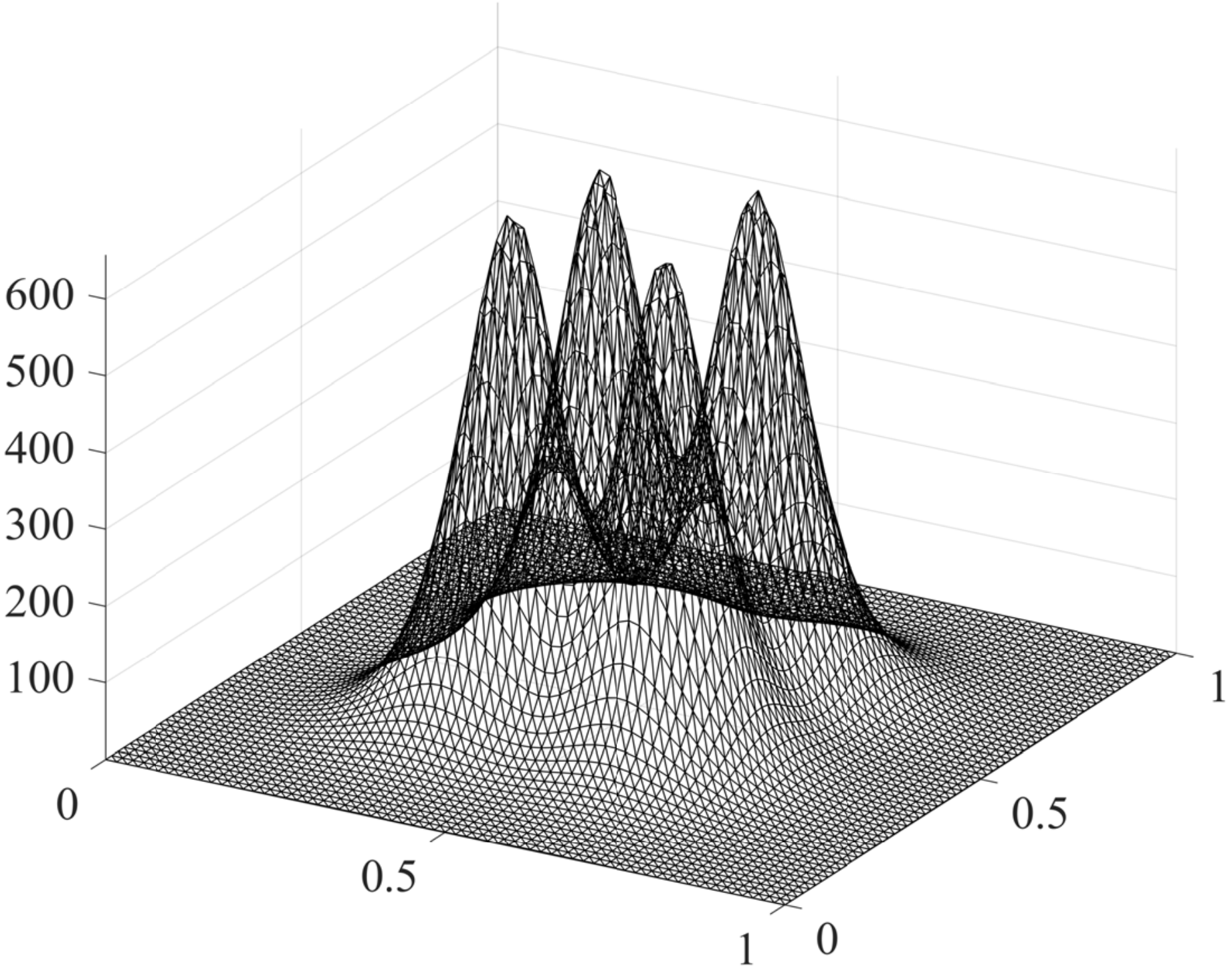}
\includegraphics[width=75mm]{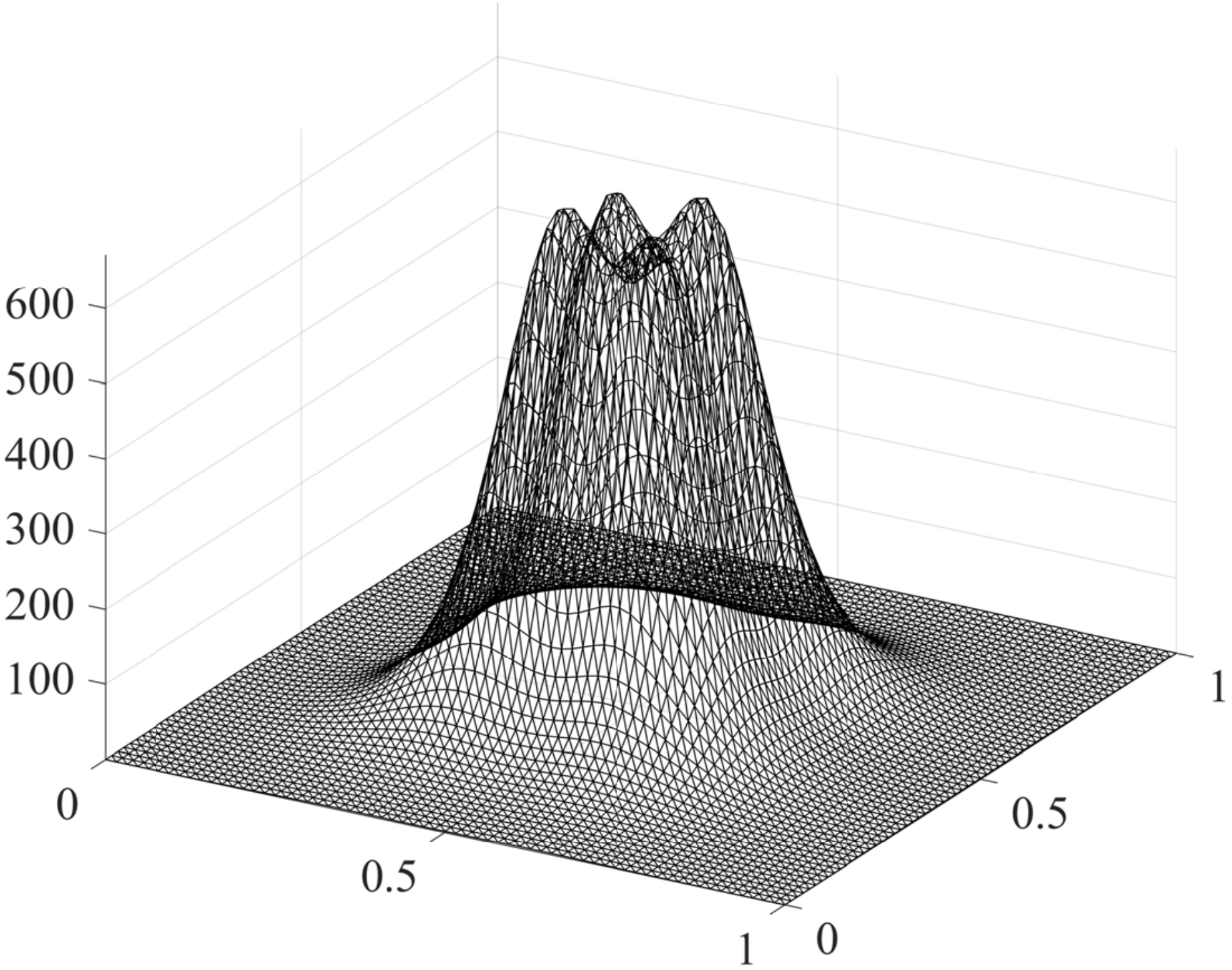}
\includegraphics[width=75mm]{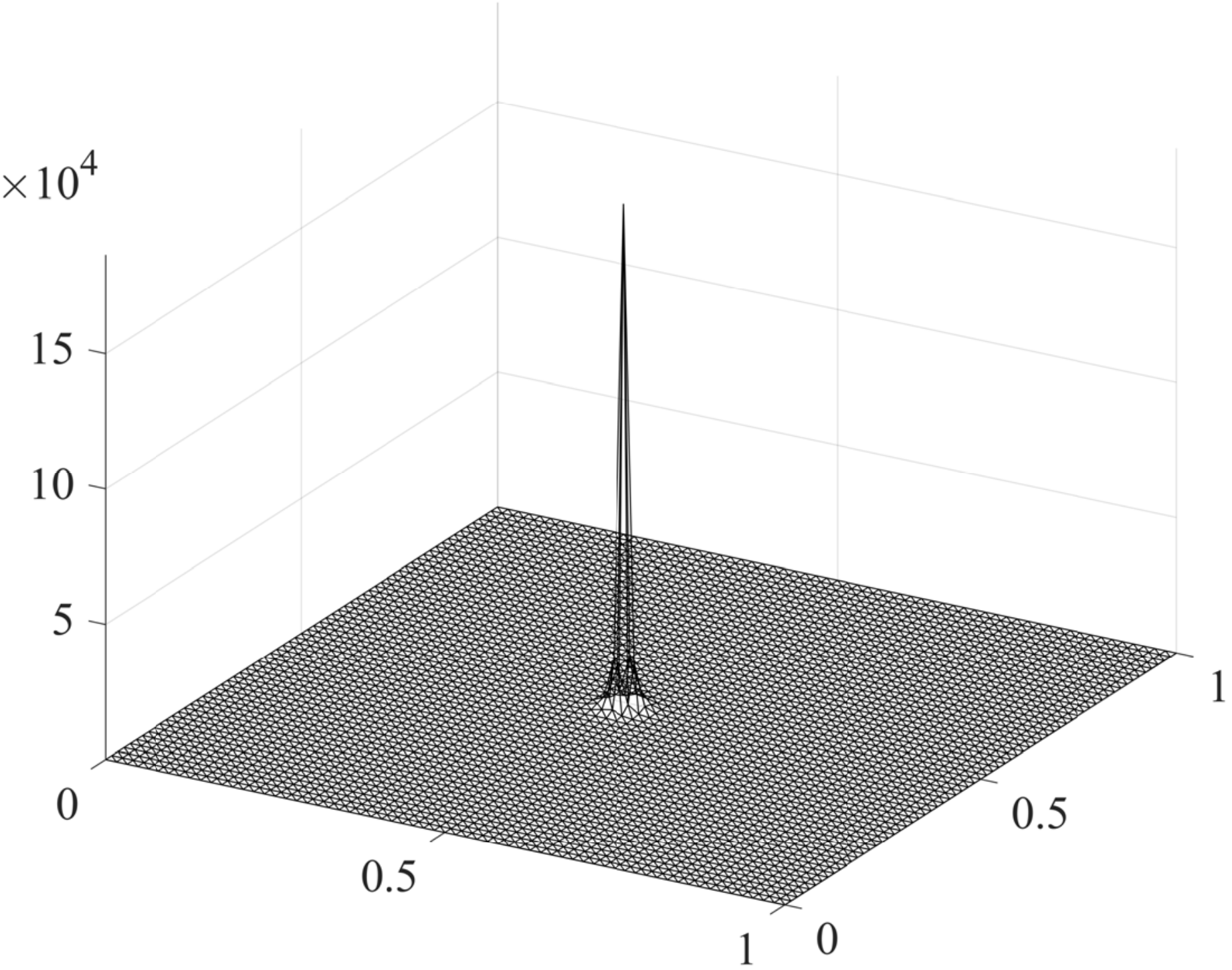}
\caption{Cell density computed from the BDF-2 scheme at times
$t=0$ (top left), $t=0.005$ (top right), $t=0.007$ (bottom left), 
$t=0.02$ (bottom right).}
\label{fig.test1}
\end{figure}

A nonsymmetric situation is given by the initial data
$$
  n_0 = \frac13W_{0.33,0.66} + \frac12 W_{0.33,0.33} + W_{0.66,0.66},
$$
taking $\theta=1/500$, $M=6\pi$, and the same numerical parameters as above. 
The total mass of $n_0$ is $11\pi$, so we expect
again blow up of the solutions. This is illustrated in Figure \ref{fig.test2}.
The solution aggregates, moves to the boundary and blows up.
Again, the discrete solution stays nonnegative and conserves the mass.

\begin{figure}[ht]
\includegraphics[width=75mm]{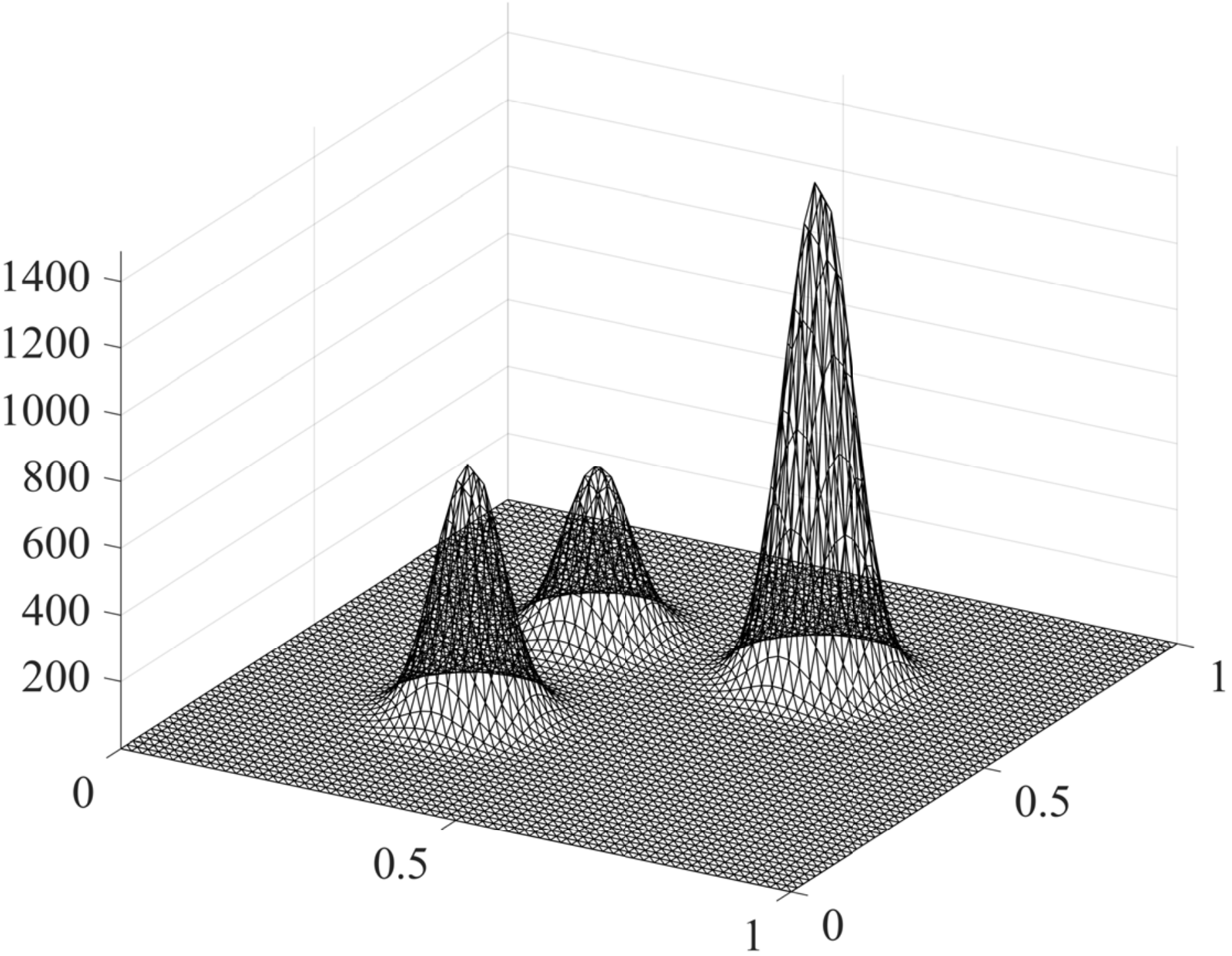}
\includegraphics[width=75mm]{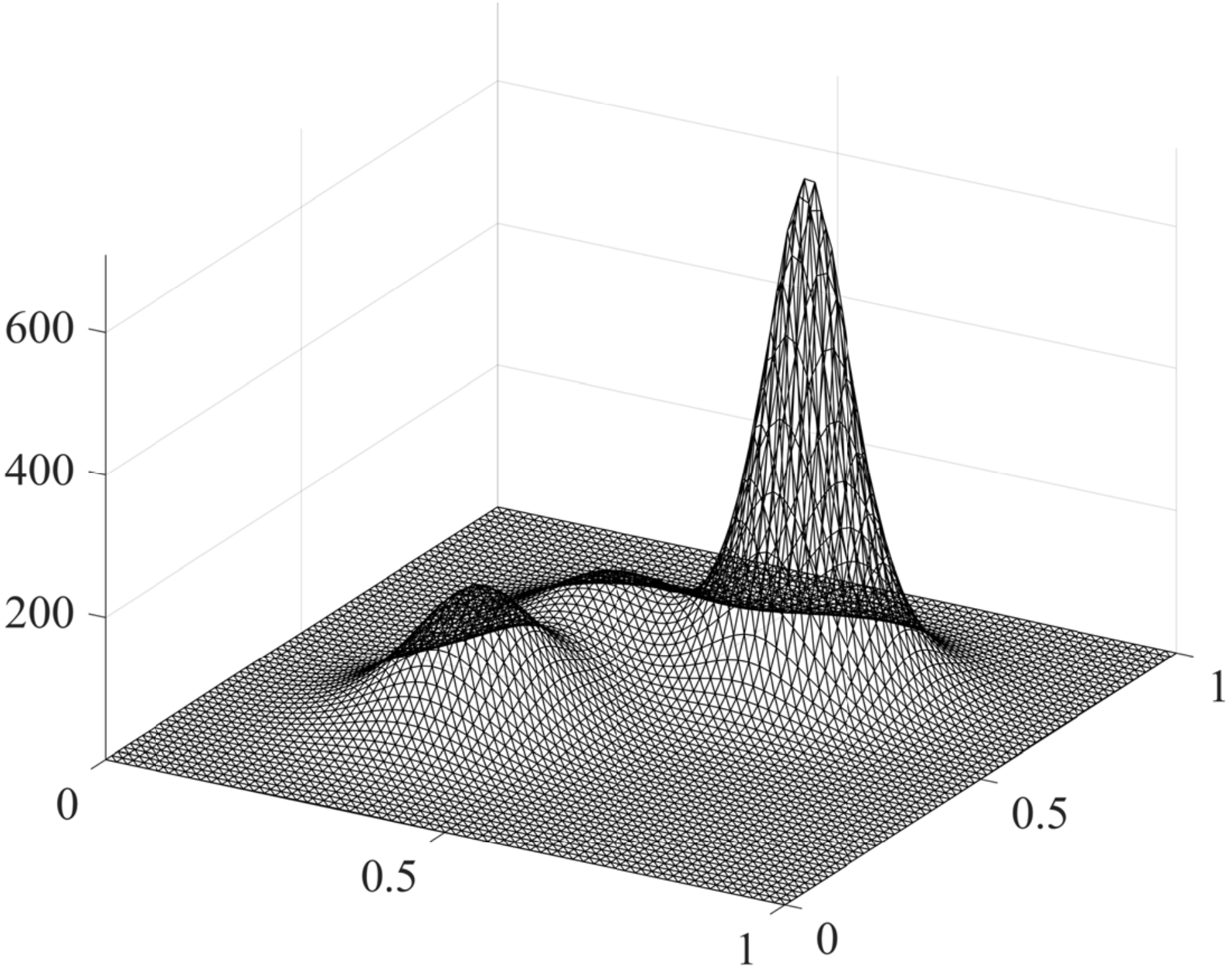}
\includegraphics[width=75mm]{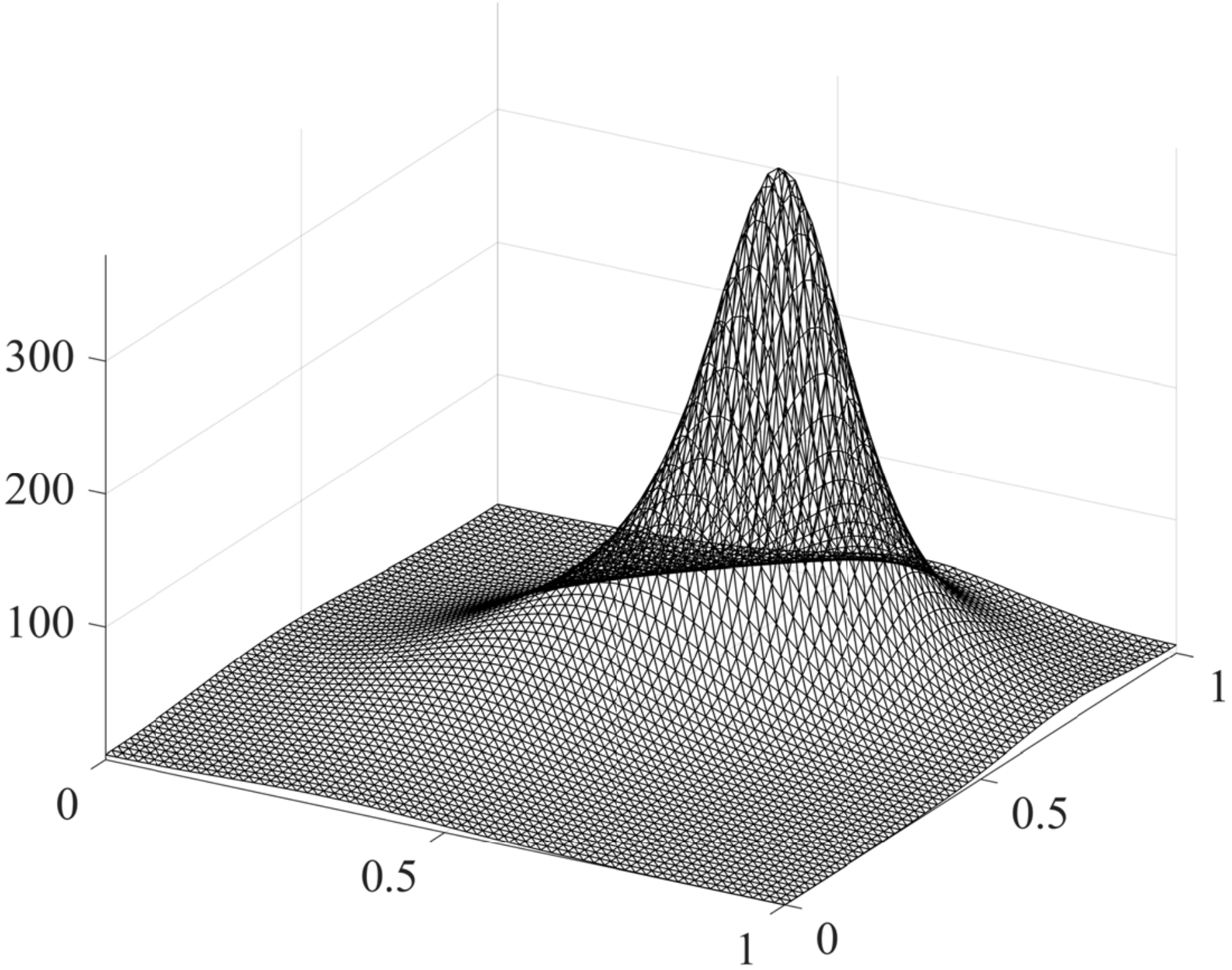}
\includegraphics[width=75mm]{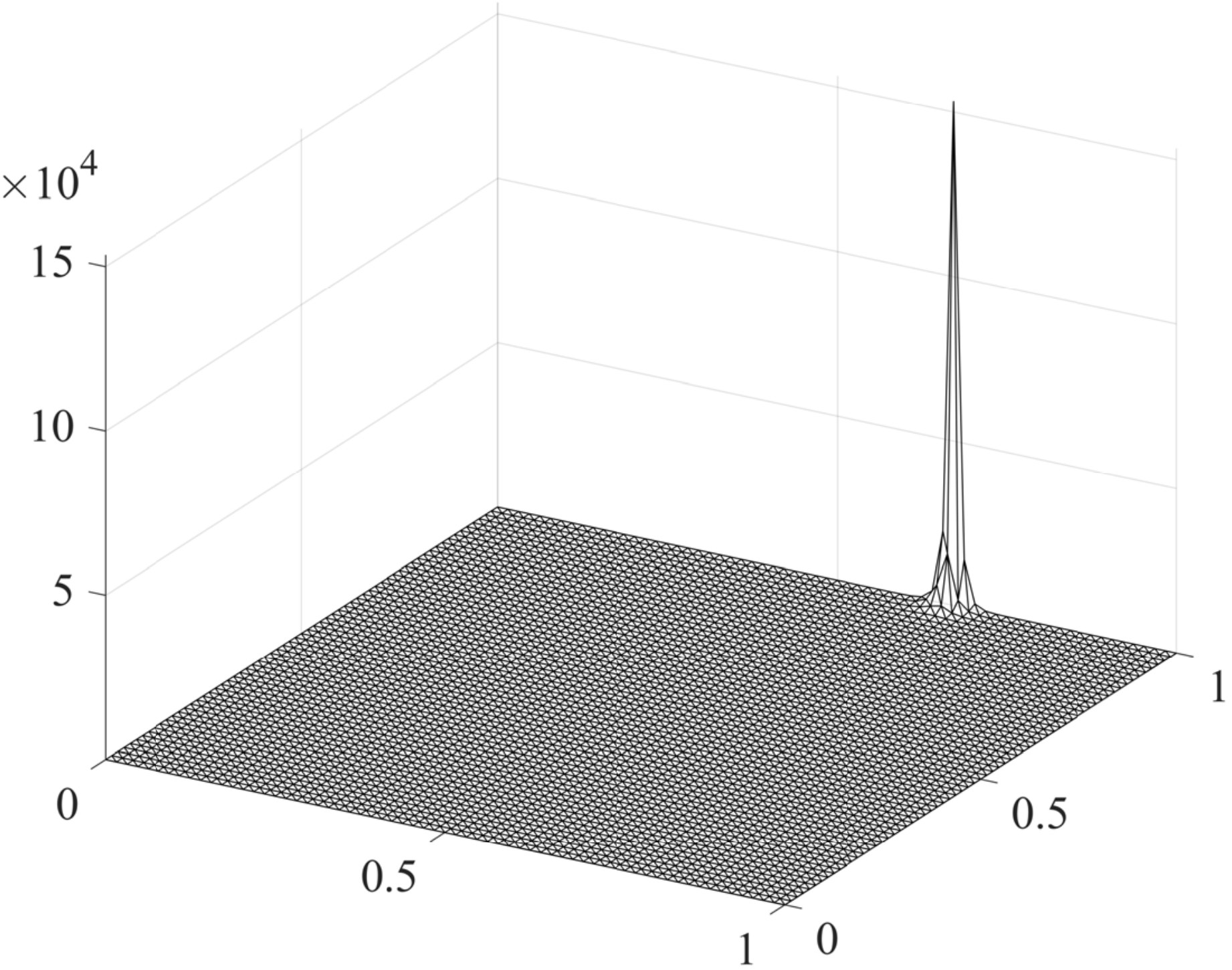}
\caption{Cell density computed from the BDF-2 scheme at times
$t=0$ (top left), $t=0.005$ (top right), $t=0.02$ (bottom left), 
$t=0.1001$ (bottom right).}
\label{fig.test2}
\end{figure}

\subsection{Convergence rate}

To calculate the temporal convergence rates and to show that the schemes 
are indeed of second order, we compute a reference solution $n_{\rm ref}$
with the very small time step
$\tau=10^{-6}$ and compare it in various $L^p$ norms with the solutions $n_\tau$
using larger time step sizes $\tau$. 
We choose the same initial datum as in the first example with
$M=24\pi$. Figure \ref{fig.conv} shows the $L^p$ error
$$
  e_p = \|n_\tau(\cdot,T)-n_{\rm ref}(\cdot,T)\|_{L^p(\Omega)},
$$
where the end time $T=0.01$ is chosen such that the density already started
to aggregate but blow up still did not happen. As expected, the $L^p$ errors
are approximately of second order.

\begin{figure}[ht]
\includegraphics[width=75mm]{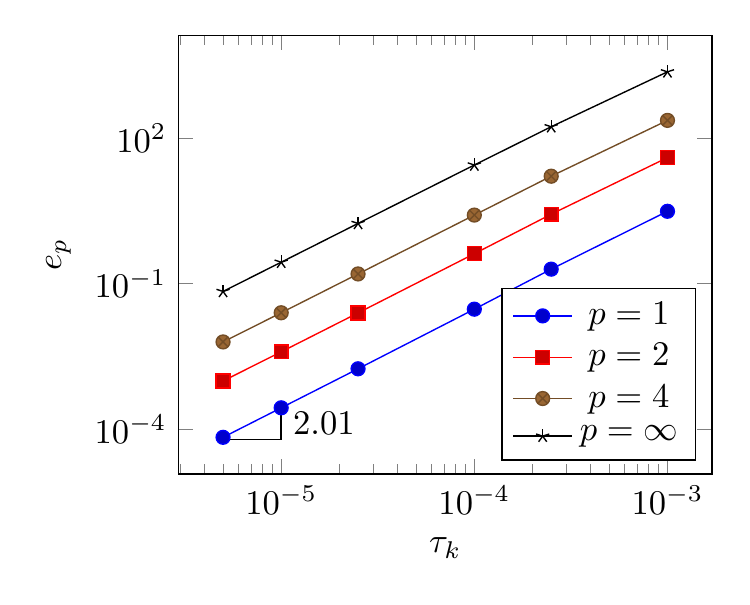}
\includegraphics[width=75mm]{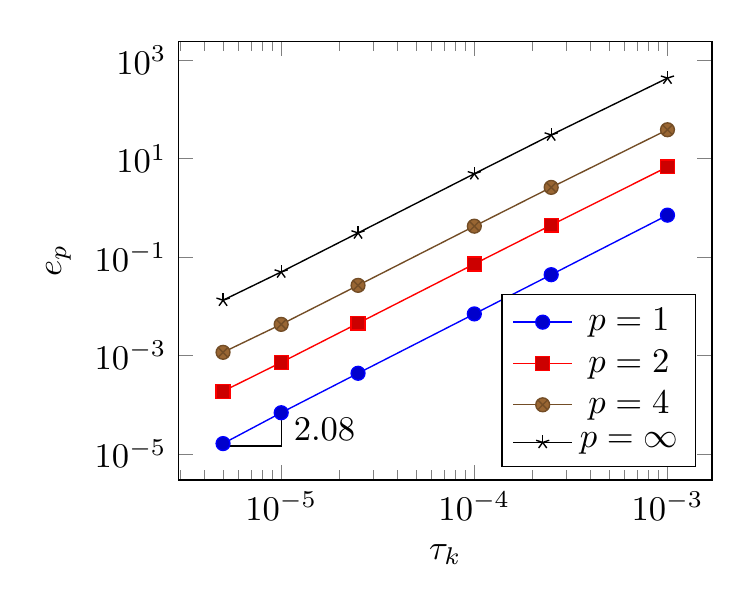}
\caption{$L^p$ error $e_p$ for $p=1,2,4,\infty$ at time $T=0.01$ 
for various time step sizes $\tau_k=\tau$ (left: BDF-2 discretization; right:
midpoint discretization).}
\label{fig.conv}
\end{figure}

\subsection{Numerical blow-up}

We demonstrate that the bound for the blow-up time $T^*=\tau k_{\rm max}$
derived in the time-discrete situation can serve as a bound for the numerical
blow up. It is well known that the computation of the numerical blow-up time
is rather delicate. For instance, Chertock et al.\ \cite{CEHK16} use the
$L^\infty$ norm of the density as a measure of the numerical blow-up time,
since $\|n\|_{L^\infty(\Omega)}$ is proportional to $h^2$ (recall that
$h$ is the spatial grid size). Numerical blow-up may be reached
when the numerical solution becomes negative \cite{ChKu08,SSKT10} or when
the seond moment becomes negative \cite{HaSc09}.
However, since our scheme conserves the mass and the grid is finite,
the numerical solution cannot blow up in the $L^\infty$ norm. Instead,
the solution converges to a state where the mass concentrates at certain points
and no further growth is possible.
Moreover, as the scheme conserves the nonnegativity numerically, 
the second moment cannot become negative, but it will be small near blow-up. 
A lower bound for the blow-up time was derived in, for instance, 
\cite[Theorem~2.2]{FMV15} in two space dimensions (but with a nonexplicit bound) and in
\cite[Prop.~3.1]{CCE12} in three space dimensions.

For the numerical test, we choose the initial datum $n_0=W_{1,1}$
on the domain $\Omega=(0,2)^2$ with parameters $\theta=1/500$, $M=30\pi$, 
and $\tau=10^{-5}$. The grid sizes are $h=0.02$, $0.04$, $0.08$.
Figure \ref{fig.blowup} illustrates the evolution of $\|n_k\|_{L^\infty(\Omega)}$
and $I_k$. The vertical line marks the bound $k_{\rm max}$ from \eqref{3.kmax2}.
We observe that the $L^\infty$ norm and the second moment reach a limit 
close to $k_{\rm max}$. The initial density and the density at $k_{\rm max}$
are displayed in Figure \ref{fig.dens} for comparison.

\begin{figure}[ht]
\includegraphics[width=75mm]{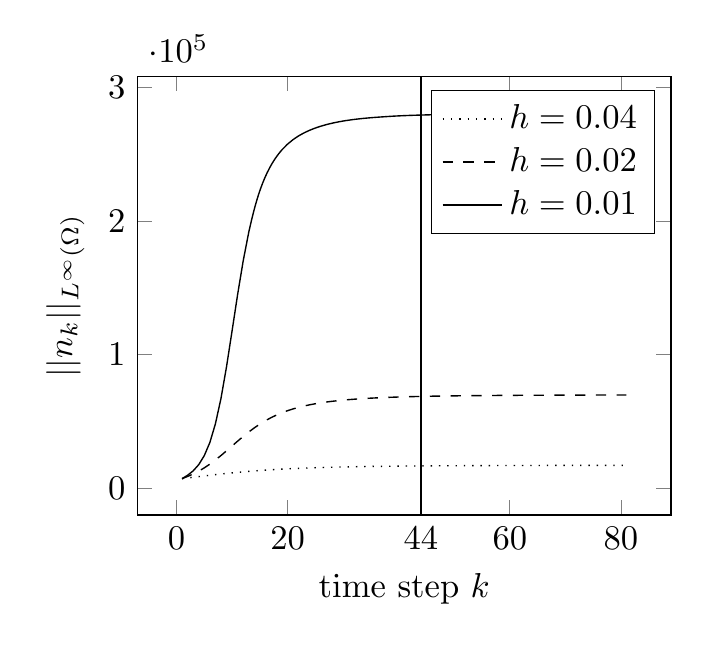}
\includegraphics[width=75mm]{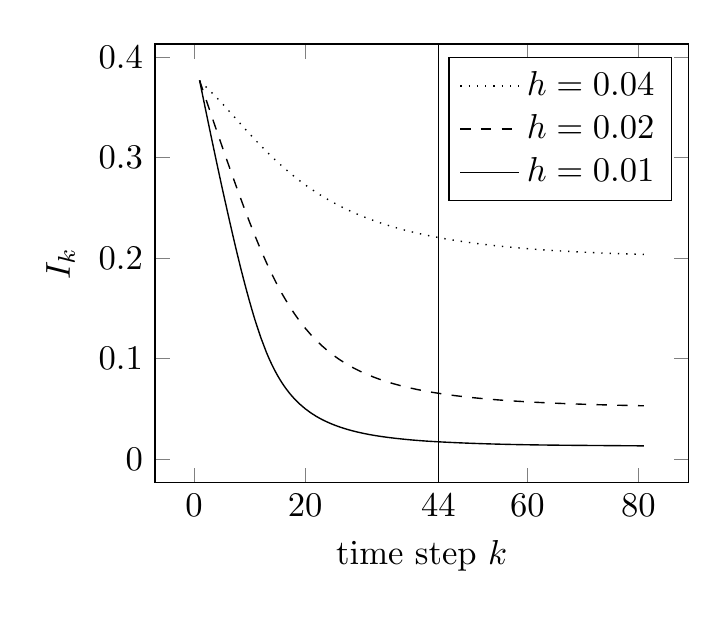}
\caption{$L^\infty$ norm $\|n_k\|_{L^\infty(\Omega)}$ (left) and second moment
$I_k$ (right) versus time. The vertical line marks the
upper bound $k_{\rm max}$ defined in \eqref{3.kmax2}.}
\label{fig.blowup}
\end{figure}

\begin{figure}[ht]
\includegraphics[width=75mm]{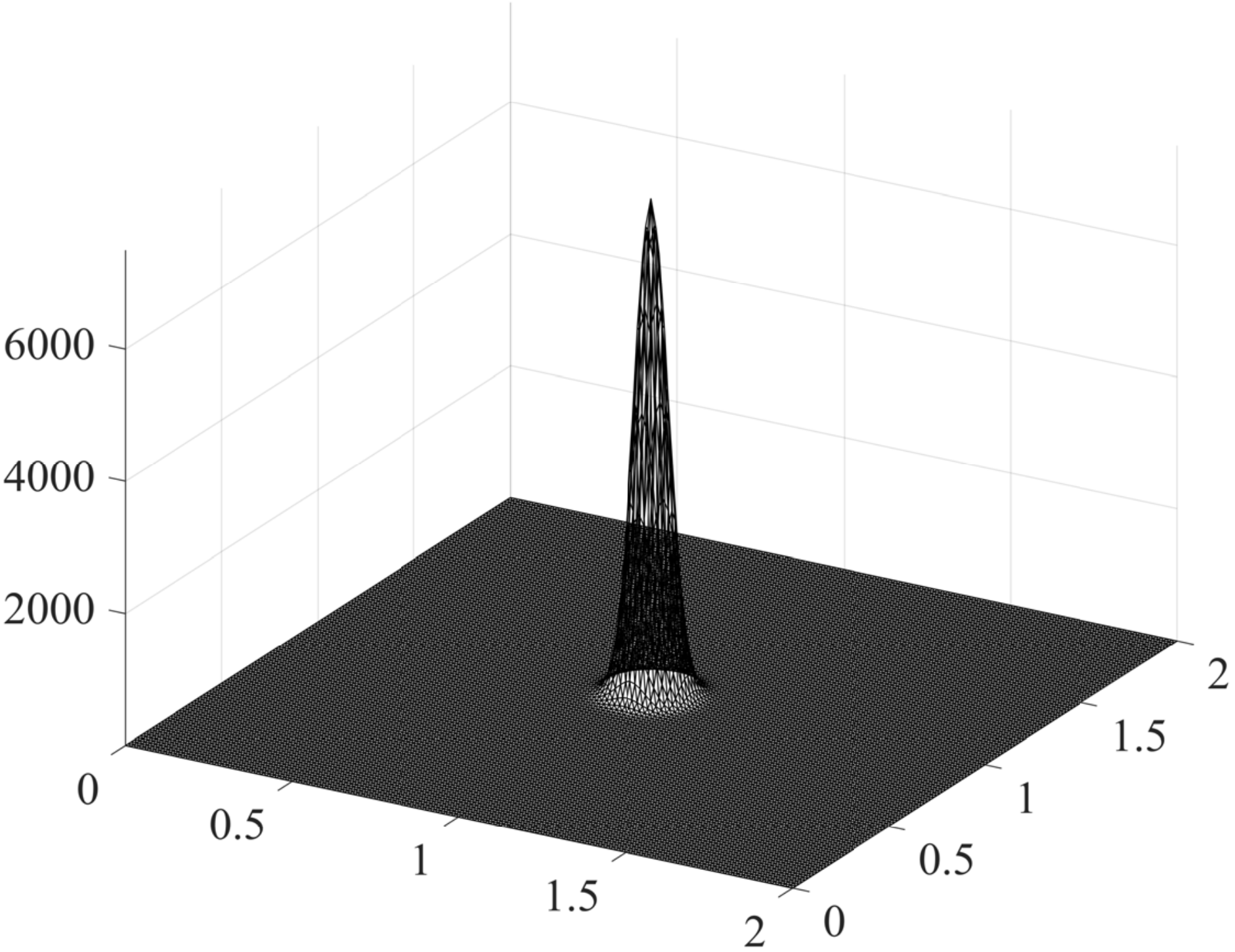}
\includegraphics[width=75mm]{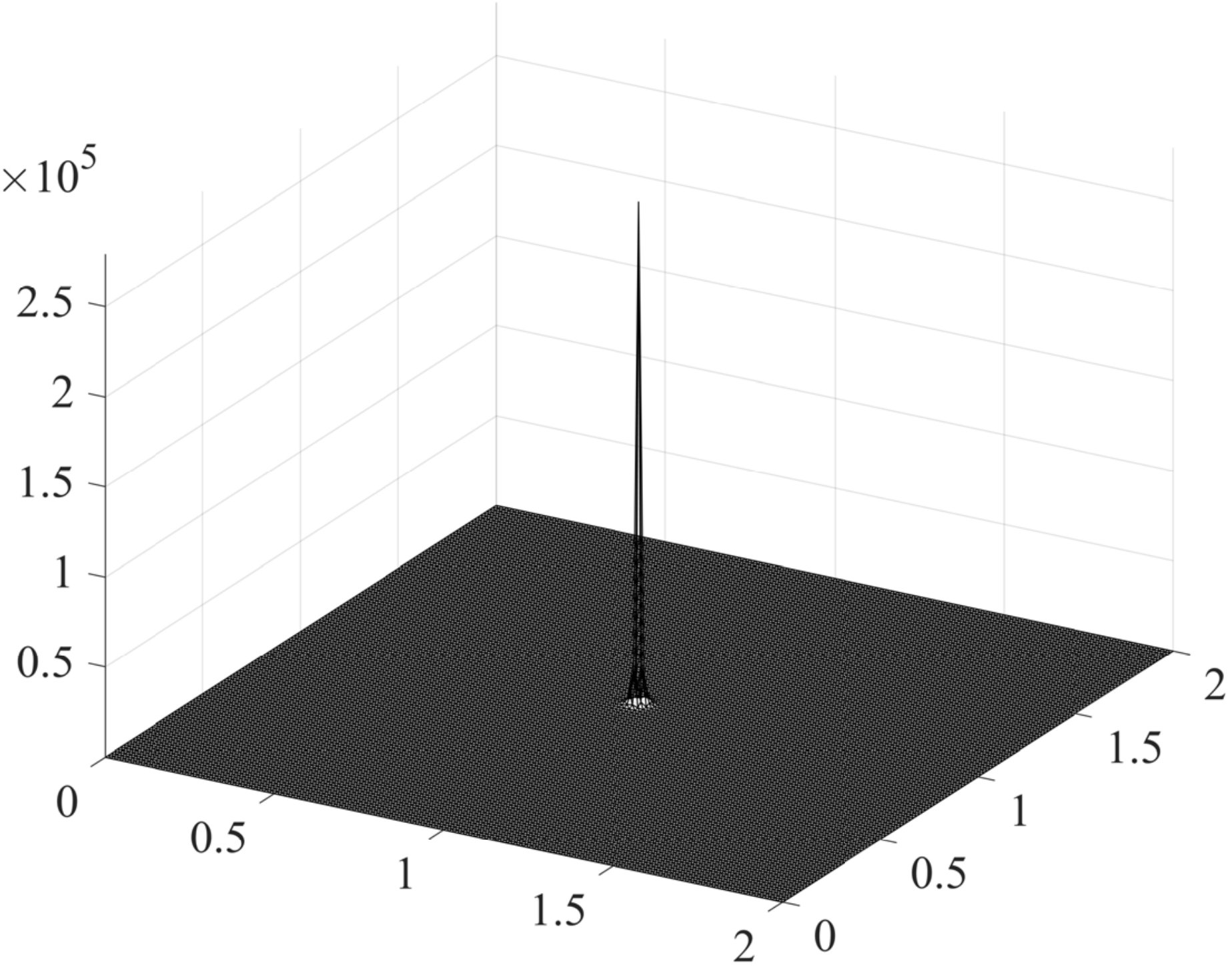}
\caption{Cell density at time step $k=0$ (left) and $k=k_{\rm max}=44$.
The mesh size is $h=0.02$.}
\label{fig.dens}
\end{figure}


\begin{appendix}
\section{Some auxiliary results}

We recall that the function
$$
  B_\alpha(x) = \begin{cases} 
  -\frac{1}{2 \pi} \log|x| & \mbox{for }\alpha=0, \\
  \frac{1}{(4 \pi)^{n/2}} \int_0^{\infty} 
	t^{-n/2}e^{-\alpha t-|x|^2/(4t)}dt & \mbox{for }\alpha >0,
   \end{cases}
$$
defined for $x\in\R^n$, is called the Newton potential if $\alpha=0$ and the 
Bessel potential if $\alpha\neq 0$. 
We need the following properties of the Bessel potential.

\begin{lemma}[Bessel potential]\label{lem.bessel}
Let $\alpha>0$ and $k\in\N_0$. 
Then $B_\alpha$ is a fundamental solution of the operator
$-\Delta+\alpha$. For given $f\in H^k(\R^n)$, the function 
$u=B_\alpha*f\in H^{k+2}(\R^n)$ solves
$$
  -\Delta u + \alpha u = f \quad\mbox{in }\R^n.
$$
Furthermore, it holds that $D^\beta(B_\alpha*f)=B_\alpha*D^\beta f$ for all
multi-indices $\beta\in\N^n$, $|\beta|\le k$ and
\begin{align}
  & B_\alpha\ge 0, \quad \|B_\alpha\|_{L^1(\R^n)} = \frac{1}{\alpha}, \label{B1} \\
  & \|\na B_\alpha\|_{L^1(\R^n)} = \frac{C(n)}{\pi^{(n-1)/2}\alpha^{1/2}}, \quad
	\|\na B_\alpha\|_{L^1(\R^2)} = \frac{\pi}{2\alpha^{1/2}}, \label{B2}
\end{align}
where the constant $C(n)>0$ only depends on $n$.
\end{lemma}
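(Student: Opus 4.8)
The plan is to base the entire proof on the subordination formula that writes $B_\alpha$ as a Laplace transform of the Gauss--Weierstrass kernel $\Phi(x,t)=(4\pi t)^{-n/2}e^{-|x|^2/(4t)}$. The very substitution already used in \eqref{2.ga}--\eqref{2.naB} shows that
$$
  B_\alpha(x) = \int_0^\infty e^{-\alpha t}\,\Phi(x,t)\,dt,
$$
which is the key identity from which everything follows. Nonnegativity of $B_\alpha$ is then immediate, and since $\int_{\R^n}\Phi(\cdot,t)\,dx=1$ for every $t>0$, Tonelli's theorem gives $\|B_\alpha\|_{L^1(\R^n)}=\int_0^\infty e^{-\alpha t}dt=1/\alpha$, which is \eqref{B1}.

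For the fundamental-solution property I would exploit that $\Phi$ solves the heat equation $\pa_t\Phi=\Delta\Phi$ with $\Phi(\cdot,t)\to\delta_0$ as $t\to0^+$. Applying $-\Delta+\alpha$ under the integral and using $-\Delta\Phi=-\pa_t\Phi$, an integration by parts in $t$ produces the boundary term $\delta_0$ and cancels the remaining $\alpha B_\alpha$ contributions, giving $(-\Delta+\alpha)B_\alpha=\delta_0$ in the distributional sense; equivalently, one may pass to Fourier variables and read off $\widehat{B_\alpha}(\xi)=(|\xi|^2+\alpha)^{-1}$. The representation $u=B_\alpha*f$ and its $H^{k+2}$ regularity then follow on the Fourier side: $\hat u=(|\xi|^2+\alpha)^{-1}\hat f$, and the multiplier $(1+|\xi|^2)(|\xi|^2+\alpha)^{-1}$ is bounded by $\max\{1,1/\alpha\}$, so $\|u\|_{H^{k+2}}\le C\|f\|_{H^k}$ together with $-\Delta u+\alpha u=f$. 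The commutation $D^\beta(B_\alpha*f)=B_\alpha*D^\beta f$ for $|\beta|\le k$ is then merely the identity $\widehat{D^\beta u}=(i\xi)^\beta\widehat{B_\alpha}\hat f$, legitimate because both sides lie in $L^2(\R^n)$.

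The remaining and genuinely computational step is the gradient norm \eqref{B2}. Differentiating the subordination formula gives $\na B_\alpha(x)=-\tfrac{x}{2}(4\pi)^{-n/2}\int_0^\infty t^{-n/2-1}e^{-\alpha t-|x|^2/(4t)}dt$, whose integrand is sign-definite, so $|\na B_\alpha|$ may be integrated by Tonelli. After exchanging the order of integration and evaluating the Gaussian moment $\int_{\R^n}|x|e^{-|x|^2/(4t)}dx$ in polar coordinates, which produces the sphere area $2\pi^{n/2}/\Gamma(n/2)$ together with a factor $2^n t^{(n+1)/2}\Gamma((n+1)/2)$, the remaining $t$-integral collapses to $\int_0^\infty t^{-1/2}e^{-\alpha t}dt=\sqrt{\pi}\,\alpha^{-1/2}$. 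Collecting the $\Gamma$- and $\pi$-powers yields $\|\na B_\alpha\|_{L^1(\R^n)}=C(n)\,\pi^{-(n-1)/2}\alpha^{-1/2}$ with $C(n)=\pi^{n/2}\Gamma((n+1)/2)/\Gamma(n/2)$ depending only on $n$.

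For the explicit value in $\R^2$ I would instead read off from \eqref{2.naB} that $|\na B_\alpha(z)|=g_\alpha(|z|)/(2\pi|z|)$, so that $\|\na B_\alpha\|_{L^1(\R^2)}=\int_0^\infty g_\alpha(r)\,dr$; inserting the definition \eqref{2.ga} of $g_\alpha$, swapping the order of integration, and computing the one-dimensional Gaussian integral in $r$ gives exactly $\pi/(2\sqrt{\alpha})$ (consistent with $C(2)=\pi^{3/2}/2$). The only real obstacle here is bookkeeping: one must keep the normalization factors $(4\pi)^{n/2}$, the sphere area, and the half-integer Gamma values mutually consistent so that the surplus $\pi$-powers combine precisely into the stated $\pi^{-(n-1)/2}$; none of the individual steps is deep, but a single misplaced constant would spoil the clean formula.
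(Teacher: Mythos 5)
Your proposal is correct, and for the only part the paper actually proves --- the gradient norm \eqref{B2} --- it follows essentially the same route: Tonelli's theorem applied to the sign-definite subordination representation of $\na B_\alpha$, followed by evaluation of the Gaussian moment. The paper reduces the moment by the substitution $u=x/\sqrt{4t}$ and leaves $C(n)=\int_{\R^n}|u|e^{-|u|^2}du$, whereas you evaluate it in polar coordinates; the two constants agree, since $\int_{\R^n}|u|e^{-|u|^2}du=\pi^{n/2}\Gamma\big(\tfrac{n+1}{2}\big)/\Gamma\big(\tfrac n2\big)$, and both give $C(2)=\pi^{3/2}/2$ and hence $\pi/(2\sqrt{\alpha})$ in $\R^2$. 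The one genuine difference is that the remaining assertions (fundamental solution, $H^{k+2}$ regularity, commutation with $D^\beta$, nonnegativity, $\|B_\alpha\|_{L^1}=1/\alpha$) are simply cited from Krylov's book in the paper, while you prove them self-containedly via the Fourier multiplier $\widehat{B_\alpha}(\xi)=(|\xi|^2+\alpha)^{-1}$ and the bound $(1+|\xi|^2)/(\alpha+|\xi|^2)\le\max\{1,1/\alpha\}$; this buys self-containedness at the cost of length, and all of your steps are sound.
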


\begin{proof}
We only prove \eqref{B2}, since the other properties are standard; see, e.g.,
Theorem 1.7.1, Corollary 1.7.2, and Examples 12.5.8 in \cite{Kry08}. By
Fubini's theorem and the substitution $u = x/\sqrt{4t}$, we find that
\begin{align*}
  \|\na B_\alpha\|_{L^1(\R^n)} 
	&= \frac{1}{2(4t)^{n/2}}\int_0^\infty
	t^{n/2+1}e^{-\alpha t}\int_{\R^n} e^{-|x|^2/(4t)}|x|dxdt \\
	&= \frac{1}{\pi^{n/2}}\int_0^\infty t^{-1/2}e^{-\alpha t}dt
	\int_{R^n}e^{-|u|^2}|u|du = \frac{1}{\pi^{(n-1)/2}}\alpha^{-1/2}C(n),
\end{align*}
where $C(n)=\int_{\R^n}e^{-|u|^2}|u|du$. In particular, when $n=2$, we obtain
$$
  C(2) = \int_0^\infty\int_0^{2\pi} e^{-r^2} r^2 d\phi dr = \frac{\pi^{3/2}}{2},
$$
ending the proof.
\end{proof}

\begin{lemma}[Young's inequality]\label{lem.young}
Let $g\in L^q(\R^n)$, $h\in L^r(\R^n)$ for $1\le q,r\le\infty$, and
$1/q+1/r=1/p+1$. Then $g*h\in L^p(\R^n)$ and
$$
  \|g*h\|_{L^p(\R^n)} \le \|g\|_{L^q(\R^n)}\|h\|_{L^r(\R^n)}.
$$
\end{lemma}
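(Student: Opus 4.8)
The plan is to reduce everything to a single pointwise application of the three-exponent Hölder inequality followed by Tonelli's theorem. First I would dispose of the degenerate exponents. If $p=\infty$, then the hypothesis $1/q+1/r=1+1/p$ forces $q$ and $r$ to be conjugate, and the asserted bound is just Hölder's inequality applied to $y\mapsto g(x-y)h(y)$ for each fixed $x$ (this case also absorbs $q=\infty$ and $r=\infty$, which can only occur when $p=\infty$). If $q=1$ (so $r=p$) or $r=1$ (so $q=p$), the inequality is the classical Minkowski integral inequality, which I would either quote or recover as the limiting case of the argument below. Hence I may assume $1\le p<\infty$ and $1<q,r<\infty$, in which case $1/q-1/p>0$ and $1/r-1/p>0$ automatically.

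For this main case the key is to split the integrand into three factors whose exponents are tuned so that two of them integrate to constants in $x$. Setting $1/s:=1/q-1/p$ and $1/t:=1/r-1/p$, the defining relation gives $1/p+1/s+1/t=1/q+1/r-1/p=1$, so $p,s,t$ is an admissible Hölder triple. I would then factor, for fixed $x$,
$$
  |g(x-y)|\,|h(y)|
  = \big(|g(x-y)|^q|h(y)|^r\big)^{1/p}\cdot|g(x-y)|^{1-q/p}\cdot|h(y)|^{1-r/p},
$$
observing that the total powers of $g$ and of $h$ are each equal to one, so the identity is valid. Applying Hölder in $y$ with the exponents $p,s,t$ and using $|(g*h)(x)|\le\int_{\R^n}|g(x-y)||h(y)|\,dy$ yields
$$
  |(g*h)(x)|\le \Big(\int_{\R^n}|g(x-y)|^q|h(y)|^r\,dy\Big)^{1/p}\;
  \|g\|_{L^q(\R^n)}^{1-q/p}\;\|h\|_{L^r(\R^n)}^{1-r/p},
$$
because $\big(\int_{\R^n}|g(x-y)|^{(1-q/p)s}\,dy\big)^{1/s}=\|g\|_{L^q(\R^n)}^{q/s}=\|g\|_{L^q(\R^n)}^{1-q/p}$ is independent of $x$, and likewise for the $h$-factor.

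It then remains to raise this pointwise bound to the $p$-th power and integrate in $x$. Tonelli's theorem together with the translation invariance of Lebesgue measure gives $\int_{\R^n}\int_{\R^n}|g(x-y)|^q|h(y)|^r\,dy\,dx=\|g\|_{L^q(\R^n)}^q\|h\|_{L^r(\R^n)}^r$, so that
$$
  \|g*h\|_{L^p(\R^n)}^p\le \|g\|_{L^q(\R^n)}^{p-q}\|h\|_{L^r(\R^n)}^{p-r}
  \cdot\|g\|_{L^q(\R^n)}^q\|h\|_{L^r(\R^n)}^r=\big(\|g\|_{L^q(\R^n)}\|h\|_{L^r(\R^n)}\big)^p,
$$
and taking the $p$-th root finishes the main case. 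The same Tonelli computation shows the inner integral is finite for a.e.\ $x$, which is exactly what guarantees that $g*h$ is well-defined almost everywhere; I would record this as a byproduct rather than prove it separately. The only genuinely delicate point is the bookkeeping of the endpoint exponents: the clean three-factor splitting degenerates (one of $s,t$ becoming infinite, or a middle factor collapsing to the constant $1$) precisely when $q=p$, $r=p$, or $p=\infty$, and I would treat these by the direct Hölder/Minkowski arguments indicated above rather than by forcing them into the general formula.
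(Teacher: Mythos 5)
Your proof is correct. Note, however, that the paper itself offers no proof of this lemma: Young's convolution inequality is quoted in the appendix as a classical auxiliary fact (alongside the Bessel-potential estimates, only the latter being partially proved), so there is no argument of the authors to compare yours against. What you give is the standard self-contained proof: the three-exponent H\"older inequality with the triple $(p,s,t)$, $1/s=1/q-1/p$, $1/t=1/r-1/p$, applied to the splitting $|g(x-y)||h(y)|=\bigl(|g(x-y)|^q|h(y)|^r\bigr)^{1/p}|g(x-y)|^{1-q/p}|h(y)|^{1-r/p}$, followed by Tonelli and translation invariance; your exponent bookkeeping is right (in the main case $q,r<p$ strictly, so $s,t$ are finite and the powers $1-q/p$, $1-r/p$ are positive), and your disposal of the endpoints is complete, since $q=\infty$ or $r=\infty$ indeed forces $p=\infty$, and $q=1$ or $r=1$ reduces to Minkowski's integral inequality. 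The only point you pass over silently is the joint measurability of $(x,y)\mapsto g(x-y)h(y)$, needed to invoke Tonelli, but this is routine and universally omitted. For the purposes of this paper the lemma is only used with $p=q=r=1$ and with $L^1*L^\infty$-type pairings, both of which your degenerate cases already cover directly.
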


\begin{lemma}[Elliptic problem]\label{lem.ell} 
Let $\tau>0$, $f\in L^2(\R^2)^2$, and $g\in L^2(\R^2)$. Then there exists a unique
weak solution $n\in H^1(\R^2)$ to
\begin{equation}\label{a.n1}
  -\Delta n + \tau^{-1}(n-g) = -\diver f\quad 
	\mbox{in }\R^2,
\end{equation}
and this solution can be represented as
\begin{equation}\label{a.n2}
  n = \frac{1}{\tau}B_{1/\tau}*g - \na B_{1/\tau}*f \quad\mbox{in }\R^2.
\end{equation}
\end{lemma}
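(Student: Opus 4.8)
The plan is to handle existence and uniqueness by the Lax--Milgram theorem and then to verify the explicit representation \eqref{a.n2} using the fundamental-solution properties of the Bessel potential collected in Lemma \ref{lem.bessel}.

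First I would write down the weak formulation: find $n\in H^1(\R^2)$ such that
$$
  \int_{\R^2}\na n\cdot\na\phi\,dx + \tau^{-1}\int_{\R^2}n\phi\,dx
  = \tau^{-1}\int_{\R^2}g\phi\,dx + \int_{\R^2}f\cdot\na\phi\,dx
$$
for all $\phi\in H^1(\R^2)$, obtained by multiplying \eqref{a.n1} by $\phi$ and integrating by parts (using $\int_{\R^2}(-\diver f)\phi\,dx=\int_{\R^2}f\cdot\na\phi\,dx$). The bilinear form on the left is clearly bounded on $H^1(\R^2)$ and coercive, since $a(n,n)=\|\na n\|_{L^2(\R^2)}^2+\tau^{-1}\|n\|_{L^2(\R^2)}^2\ge\min\{1,\tau^{-1}\}\|n\|_{H^1(\R^2)}^2$, while the linear functional on the right is bounded because $g\in L^2(\R^2)$ and $f\in L^2(\R^2)^2$. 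The Lax--Milgram theorem then yields a unique weak solution $n\in H^1(\R^2)$.

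For the representation formula I would treat the two source terms separately. Since $g\in L^2(\R^2)=H^0(\R^2)$, Lemma \ref{lem.bessel} with $\alpha=1/\tau$ applies directly and shows that $u:=\tau^{-1}B_{1/\tau}*g\in H^2(\R^2)$ solves $-\Delta u+\tau^{-1}u=\tau^{-1}g$. The $f$-term is the delicate one, because the source $-\diver f$ only lies in $H^{-1}(\R^2)$, so Lemma \ref{lem.bessel} cannot be invoked for it directly. To give $\na B_{1/\tau}*f$ a meaning, I would use $\na B_{1/\tau}\in L^1(\R^2)$ from property \eqref{B2}, so that Young's inequality (Lemma \ref{lem.young} with $q=1$, $r=2$, $p=2$) gives $\na B_{1/\tau}*f\in L^2(\R^2)$.

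To verify that $v:=-\na B_{1/\tau}*f$ contributes the correct term, I would approximate $f$ by $f_\eps\in C_0^\infty(\R^2)^2$ with $f_\eps\to f$ in $L^2(\R^2)$. For such smooth data, $\diver f_\eps\in H^k(\R^2)$ for every $k$, so Lemma \ref{lem.bessel} gives $B_{1/\tau}*\diver f_\eps\in H^{k+2}(\R^2)$, and the identity $D^\beta(B_{1/\tau}*f_\eps)=B_{1/\tau}*D^\beta f_\eps$ from the same lemma lets me shift the divergence onto the kernel, yielding $B_{1/\tau}*\diver f_\eps=\na B_{1/\tau}*f_\eps$. Hence the regularized solution is $n_\eps=\tau^{-1}B_{1/\tau}*g-\na B_{1/\tau}*f_\eps$. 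Letting $\eps\to0$, Young's inequality gives $\na B_{1/\tau}*f_\eps\to\na B_{1/\tau}*f$ in $L^2(\R^2)$, while continuous dependence on the data in the Lax--Milgram estimate gives $n_\eps\to n$ in $H^1(\R^2)$; passing to the limit establishes \eqref{a.n2}, which by uniqueness is the weak solution. The main obstacle throughout is the low regularity of $-\diver f$: the argument hinges on transferring the derivative onto the Bessel kernel via the convolution-derivative identity and exploiting $\na B_{1/\tau}\in L^1(\R^2)$, rather than attempting to apply the Bessel regularity result to $\diver f$ itself.
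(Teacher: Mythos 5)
Your proof is correct, but it is organized differently from the paper's. The paper never invokes Lax--Milgram: it approximates $f$ by $f_k\in C_0^\infty(\R^2)^2$, solves the regularized equation $-\Delta n_k+\tau^{-1}n_k=\tau^{-1}g-\diver f_k$ via Lemma \ref{lem.bessel}, obtaining $n_k=\tau^{-1}B_{1/\tau}*g-\na B_{1/\tau}*f_k$ by integration by parts (the same derivative transfer you perform), then proves that $(n_k)$ is a Cauchy sequence in $H^1(\R^2)$ by testing the difference of two regularized equations with $n_k-n_\ell$, and finally identifies the $H^1$ limit with the representation \eqref{a.n2} via Young's inequality. So in the paper existence itself is produced by the approximation-and-energy argument, whereas you obtain existence, uniqueness, and the continuous-dependence bound $\|n_\eps-n\|_{H^1(\R^2)}\le C\|f_\eps-f\|_{L^2(\R^2)}$ up front from coercivity, and use the smooth approximation only to verify the representation formula; this makes uniqueness explicit (in the paper it is asserted in the statement but left implicit in the energy estimate) and renders the final limit passage immediate. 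The shared core is identical: smooth approximation of $f$, Lemma \ref{lem.bessel} for the regularized problem, and Young's inequality exploiting $\na B_{1/\tau}\in L^1(\R^2)$. One small wording point: the identity $D^\beta(B_\alpha*h)=B_\alpha*D^\beta h$ of Lemma \ref{lem.bessel} moves derivatives onto $h$; to place the divergence on the kernel, as you need, you also use the standard fact that a derivative of a convolution may fall on either factor (this is exactly the integration by parts the paper performs), which is valid since $f_\eps\in C_0^\infty(\R^2)^2$ and $\na B_{1/\tau}\in L^1(\R^2)$ --- a presentational point, not a gap.
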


Equation \eqref{a.n1} correspond to the implicit Euler discretization of a
parabolic problem with $n$ being the solution at the actual time step
and $g$ being the solution at the previous time step. Although the result is
standard, we give proof for the sake of completeness.

\begin{proof}
Let $f_k\in C_0^\infty(\R^2)^2$ be 
such that $f_k\to f$ in $L^2(\R^2)^2$ as $k\to\infty$.
By Lemma \ref{lem.bessel}, there exists a unique solution $n_k\in H^1(\R^2)$ to
\begin{equation}\label{a.aux}
  -\Delta n_k + \tau^{-1}n_k = \tau^{-1} g - \diver f_k,
\end{equation}
and, by the variation-of-constants formula and integration by parts,
\begin{equation}\label{a.aux2}
  n_k(x) = \frac{1}{\tau}(B_{1/\tau}*g)(x) - \int_{\R^2}(\na B_{1/\tau})(x-y)\cdot 
	f_k(y)dy.
\end{equation}
Taking the test function $n_k-n_\ell$ in the difference of the weak formulations for
$n_k$, $n_\ell$ corresponding to $f_k$, $f_\ell$, respectively, it follows that
\begin{align*}
  \|\na(n_k-n_\ell)\|^2_{L^2(\R^2)} + \frac{1}{\tau}\|n_k-n_\ell\|_{L^2(\R^2)}^2
	&= \int_{\R^2}(f_k-f_\ell)\cdot\na(n_k-n_\ell)dx \\
  &\le \frac12\|f_k-f_\ell\|_{L^2(\R^2)}^2 + \frac12\|\na(n_k-n_\ell)\|_{L^2(\R^2)}^2.
\end{align*}
Since $(f_k)$ is a Cauchy sequence, $(n_k)$ is a Cauchy sequence in $H^1(\R^2)$ 
and hence there exists a $\widetilde n \in H^1(\R^2)$, such that $n_k\to \widetilde n$ strongly in $H^1(\R^2)$ as $k\to\infty$. Therefore,
we can perform the limit $k\to \infty$ in the weak formulation of \eqref{a.aux} 
leading to \eqref{a.n1}.

It remains to show \eqref{a.n2}. Let
$ n = (1/\tau)B_{1/\tau}*g - \na B_{1/\tau}*f$.
Then, by Lemma \ref{lem.young} and \eqref{a.aux2},
\begin{align*}
  \|\widetilde n-n\|_{L^2(\R^2)}
	&\le \|\widetilde n-n_k\|_{L^2(\R^2)} + \|n_k-n\|_{L^2(\R^2)} \\
  &\le \|\widetilde n-n_k\|_{L^2(\R^2)} + \|\na B_{1/\tau}\|_{L^2(\R^2)}
	\|f_k-f\|_{L^2(\R^2)}.
\end{align*}
The right-hand side can be made arbitrarily small by choosing $k$ sufficiently
large. This shows that $\widetilde n=n$ in $\R^2$.
\end{proof}

\end{appendix}


\end{document}